\documentclass[12pt]{amsart}
\usepackage{fullpage}
\usepackage{latexsym}
\usepackage{amssymb}
\usepackage[all]{xy}
 \makeindex

\pagestyle{headings}
\setlength{\headheight}{6.5pt}
\setlength{\headsep}{0.5cm}

\newtheorem{thm}{Theorem}[section]
\newtheorem{prop}[thm]{Proposition}
\newtheorem{cor}[thm]{Corollary}

\newtheorem{lem}[thm]{Lemma}

\newtheorem{prob}[thm]{Problem}

\theoremstyle{definition}
\newtheorem{rem}[thm]{Remark}
\newtheorem{ex}[thm]{Example}
\newtheorem{defn}[thm]{Definition}

\usepackage{hyperref}

\newcommand{\CP}{\mathbb{CP}}
\newcommand{\N}{\mathbb{N}}
\newcommand{\Q}{\mathbb{Q}}
\newcommand{\R}{\mathbb{R}}
\newcommand{\Z}{\mathbb{Z}}
\newcommand{\RP}{\mathbb{RP}}

\newcommand{\SO}{\mathrm{\SO}}
\newcommand{\SL}{\mathrm{SL}}
\newcommand{\Nil}{\mathrm{Nil}}

\title[{\tiny Geometric structures in Topology, Geometry, Global Analysis and Dynamics}]{Geometric structures in Topology, Geometry,\\ Global Analysis and Dynamics}

\author{Christoforos Neofytidis}
\address{Department of Mathematics, Ohio State University, Columbus, OH, USA}
\email{neofytidis.1@osu.edu}
\date{\today}
\subjclass[2010]{57M05, 57M10, 57M50, 55M25, 22E25, 20F34, 37D20, 55R10, 57R19.}
\keywords{Thurston geometry, aspherical manifold, domination, Gromov order, monotone invariant, simplicial volume, Kodaira dimension, Anosov diffeomorphism}

\begin{document}

\maketitle

\begin{abstract}
Following Thurston's geometrisation picture in dimension three, we study 
 geometric manifolds in a more general setting in arbitrary dimensions, with respect to the following problems: (i) The existence of maps of non-zero degree (domination relation or Gromov's order); (ii) The Gromov-Thurston monotonicity problem for numerical homotopy invariants with respect to the domination relation; (iii) The existence of Anosov diffeomorphisms (Anosov-Smale conjecture).
\end{abstract}

\section{Introduction}

Thurston's work has initiated and motivated tremendous research activity in various directions. The purpose of this survey is to present how Thurston's geometrisation picture for $3$-manifolds can be used and extended in high dimensions, including both geometric manifolds in the sense of Thurston and other non-geometric manifolds, to give a unified treatment of a diversity of problems arising in Topology, Geometry, Global Analysis and Dynamics. 

At the topological level, we will be dealing with an ordering of homotopy classes of manifolds of a given dimension, called {\em Gromov order}\index{Gromov!order} or {\em domination relation}\index{domination}. We shall say that a manifold $M$ {\em dominates} $N$, and write $M\geq N$, if there is a map $f\colon M\to N$ of non-zero degree. The domination relation has been studied by many people and in various contexts, using a plethora of techniques and tools, such as cohomology ring structures and intersection forms in Algebraic Topology, 
 bounded cohomology and the simplicial volume in Geometry and Global Analysis, 
 the fundamental group in Group Theory, as well as the theory of harmonic mappings in Complex and Harmonic Analysis. In this survey, we will present an ordering  in the sense of S. Wang~\cite{Wangorder} of all $3$-manifolds and geometric $4$-manifolds.

As indicated above, the simplicial volume $\|\cdot\|$ is a significant tool in the study of the domination relation, and it is an example of a {\em functorial semi-norm} in homology. Namely, if $f\colon M\to N$ is a map of degree $d$, then $\|M\|\geq|d|\|N\|$. In dimension two, this can be restated in terms of the absolute Euler characteristic $|\chi|$. Pointing out again Thurston's influence, we quote the following from Gromov's book {\em Metric Structures for Riemannian and Non-Remannian spaces}~\cite[pg. 300]{Gromovbook}: {\em ``The interpretation of $|\chi|$ as a norm originates from a work by Thurston, who used this idea to define a norm on $H_2(X^3)$ using surfaces embedded into $3$-manifolds"}. From this more geometric and global analytic point of view, our second goal in this survey is to study the following  monotonicity problem: {\em Given a numerical invariant $\iota$, does $M\geq N$ imply $\iota(M)\geq\iota(N)$?} We will introduce a notion of {\em geometric Kodaira dimension $\kappa^g$} and show that $M\geq N$ implies $\kappa^g(M)\geq\kappa^g(N)$ for all $3$-manifolds and geometric manifolds in dimensions four and five. We will compare our definition of $\kappa^g$ with traditional notions of Kodaira dimension in Complex Geometry and establish relations to the simplicial volume.

The last part of this survey has a dynamical flavor, namely the study of Anosov diffeomorphisms. A long-standing conjecture, going back to Anosov and Smale, asserts that all Anosov diffeomorphisms are conjugate to hyperbolic automorphisms of nilmanifolds~\cite{Sm}. Algebraic tools, such as Hirsch and Ruelle-Sullivan cohomology classes, coarse geometric methods, such as negative curvature, and many other techniques from various areas have been proven fruitful in understanding the Anosov-Smale conjecture; see for example~\cite{GH},~\cite{GL},~\cite{NeoAnosov1} and their references. Here, with a more unified approach achieved via Thurston's geometries, we will explain how to rule out Anosov diffeomorphisms from all Thurston geometric $4$-manifolds that are not covered by the product of two surfaces of positive genus. 

Throughout this survey (and for the sake of simplicity) all manifolds are assumed to be closed, oriented and connected.

\subsection*{Acknowledgements}
I would like to thank Ken’ichi Ohshika and Athanase Papadopoulos for their invitation to write this survey, as well as an anonymous referee for the careful reading and the suggestions.

\section{Domination, Monotonicity and Anosov maps}\label{s:preliminary}

We begin our discussion with an overview of the three main topics of this survey, indicating as well their state of the art with various open questions. This section aims also to
 serve as an introduction for readers not familiar with these topics.
 
\subsection{The domination relation}

\begin{defn}\label{d:domination}
Let $M,N$ be manifolds  of dimension $n$. We say that $M$ {\em dominates} $N$ if there is a map $f\colon M\to N$ of non-zero degree $d$. We denote this by $M\geq N$ or by $M\geq_d N$ when we need to emphasise on the specific degree $d$. We also write $\deg(f)$ for the degree.
\end{defn}

Recall that $f\colon M\to N$ being of degree $d$ means that the induced map in homology $H_n(f)\colon H_n(M;\Z)\to H_n(N;\Z)$ satisfies $H_n(f)([M])=d\cdot[N]$, where $[X]$ denotes the fundamental (orientation) class\index{non-zero degree}.

\medskip

Domination is a transitive relation. For, if $M\geq N$ and $N\geq W$, then $M\geq W$ via composition of the two dominant maps. In a lecture at CUNY Graduate Center in 1978, Gromov suggested studying the domination relation as an ordering of the homotopy classes of manifolds of the same dimension (hence the name {\em Gromov order})\index{Gromov!order}. In dimension two, the domination relation is indeed a total order given by the genus:

\begin{prop}\label{p:surfaceorder}
Let $\Sigma_g, \Sigma_h$ be two surfaces of genus $g$ and $h$ respectively. Then, $\Sigma_g\geq \Sigma_h$ if and only if $g\geq h$.
\end{prop}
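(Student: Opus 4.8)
The plan is to prove the two implications separately, handling the ``if'' direction by an explicit geometric construction and the ``only if'' direction by a homological obstruction.

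For sufficiency, suppose $g\geq h$. I would realise $\Sigma_g$ as the connected sum $\Sigma_h\#\Sigma_{g-h}$ and collapse the $\Sigma_{g-h}$ summand. Concretely, choose an embedded separating circle decomposing $\Sigma_g$ into a genus-$h$ subsurface with one boundary component and a complementary genus-$(g-h)$ subsurface; collapsing the latter to a point yields a \emph{pinch map} $f\colon\Sigma_g\to\Sigma_h$. One checks that $f$ restricts to a homeomorphism on the genus-$h$ piece and is therefore of degree $1$, so $\Sigma_g\geq_1\Sigma_h$. The degenerate case $h=0$ is included, since any closed oriented surface admits a degree-one map onto $S^2$ by collapsing the complement of an open disk.

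For necessity, suppose $f\colon\Sigma_g\to\Sigma_h$ has degree $d\neq 0$. The key point is that a non-zero degree map between closed oriented manifolds is surjective on rational homology. To see this, I would establish the dual statement that $f^*\colon H^*(\Sigma_h;\Q)\to H^*(\Sigma_g;\Q)$ is injective: given a nonzero class $\alpha\in H^1(\Sigma_h;\Q)$, non-degeneracy of the cup-product pairing (Poincar\'e duality) supplies $\beta\in H^1(\Sigma_h;\Q)$ with $\langle\alpha\cup\beta,[\Sigma_h]\rangle\neq 0$, and then the projection formula gives $\langle f^*\alpha\cup f^*\beta,[\Sigma_g]\rangle=d\,\langle\alpha\cup\beta,[\Sigma_h]\rangle\neq 0$, forcing $f^*\alpha\neq 0$. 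Dualising, $f_*\colon H_1(\Sigma_g;\Q)\to H_1(\Sigma_h;\Q)$ is onto, whence $2g=\dim_\Q H_1(\Sigma_g;\Q)\geq\dim_\Q H_1(\Sigma_h;\Q)=2h$, that is, $g\geq h$.

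The main obstacle is to make the necessity direction uniform across all genera. A tempting shortcut is to invoke functoriality of the simplicial volume (equivalently of $|\chi|$ in dimension two), as emphasised in the introduction, via $\|\Sigma_g\|\geq|d|\,\|\Sigma_h\|$. However, this only separates the hyperbolic surfaces $g,h\geq 2$, since $\|S^2\|=\|\Sigma_1\|=0$, and so it cannot by itself exclude a non-zero degree map from, say, the torus onto a higher-genus surface. The rational-homology argument above avoids this difficulty because it compares first Betti numbers directly, applying verbatim to the spherical and toral cases; its only input is Poincar\'e duality, which is available precisely because all surfaces here are closed and oriented.
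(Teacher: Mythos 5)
Your proposal is correct and follows essentially the same route as the paper: a degree-one pinch map collapsing a subsurface for sufficiency (the paper iterates a genus-one reduction where you collapse $\Sigma_{g-h}$ in one step, a cosmetic difference), and monotonicity of the first Betti number for necessity. Your cohomological injectivity argument via the cup-product pairing and projection formula is just the Poincar\'e-dual formulation of the paper's Lemma~\ref{l:Betti}, which proves surjectivity of $H_i(f)$ on rational homology directly.
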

\begin{proof}
For the ``if" part, we observe that for each $g$, there is a degree one map, called {\em pinch map}, given by
\begin{equation}\label{pinch}
\Sigma_g\cong\underbrace{T^2\#\cdots\#T^2}_{g}\longrightarrow\underbrace{T^2\#\cdots\#T^2}_{g-1}\vee T^2\longrightarrow\underbrace{T^2\#\cdots\#T^2}_{g-1}\vee pt\cong\Sigma_{g-1}.
\end{equation}
The ``only if" part follows by the next more general lemma, since $H_1(\Sigma_g)\cong\Z^{2g}$.
\end{proof}

\begin{lem}\label{l:Betti}
If $M\geq N$, then $b_i(M)\geq b_i(N)$, where $b_i(X)=\dim H_i(X;\Q)$ denotes the $i$-th Betti number of $X$.
\end{lem}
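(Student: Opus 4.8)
The plan is to deduce the Betti-number inequality from the stronger statement that a map $f\colon M\to N$ of non-zero degree $d$ induces a \emph{surjection} $H_i(f)\colon H_i(M;\Q)\to H_i(N;\Q)$ in every degree $i$. Once surjectivity is established, the conclusion is immediate linear algebra: a surjective linear map of finite-dimensional $\Q$-vector spaces forces the dimension of the source to be at least that of the target, i.e. $b_i(M)=\dim H_i(M;\Q)\geq\dim H_i(N;\Q)=b_i(N)$.

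To prove surjectivity I would combine Poincaré duality with the projection formula for the cap product, exploiting that $M$ and $N$ are closed and oriented (as assumed throughout). Write $D_M(x)=x\cap[M]$ and $D_N(x)=x\cap[N]$ for the Poincaré duality isomorphisms $H^{n-i}(\,\cdot\,;\Q)\xrightarrow{\cong}H_i(\,\cdot\,;\Q)$. The key computation is the projection formula: for every class $x\in H^{n-i}(N;\Q)$,
\begin{equation*}
H_i(f)\bigl(f^*x\cap[M]\bigr)=x\cap H_n(f)([M])=x\cap\bigl(d\,[N]\bigr)=d\,\bigl(x\cap[N]\bigr),
\end{equation*}
where the middle equality uses exactly that $\deg(f)=d$. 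In other words, $H_i(f)\circ D_M\circ f^*=d\cdot D_N$ as maps $H^{n-i}(N;\Q)\to H_i(N;\Q)$. Since $D_N$ is an isomorphism and $d\neq 0$, over $\Q$ the right-hand side $d\cdot D_N$ is surjective; therefore the left-hand side $H_i(f)\circ D_M\circ f^*$ is surjective, and as it factors through $H_i(f)$ the latter is surjective as well.

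The point that must not be glossed over is the passage from ``$d$ times everything is hit'' to ``everything is hit'', which is precisely where rationality is essential: the scalar $d\cdot(\,\cdot\,)$ is an isomorphism over $\Q$ but not over $\Z$, where $H_i(f;\Z)$ need only have cokernel annihilated by $d$, so the naive integral analogue of the lemma is false. The hypotheses that the manifolds be closed and oriented are equally indispensable, since they are what make Poincaré duality and the fundamental classes $[M],[N]$ available in the first place. Everything else---the projection formula and the standard functoriality of $H_i(f)$ and $f^*$---is routine and can be quoted without further comment.
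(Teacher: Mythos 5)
Your proof is correct and is essentially the same as the paper's: both establish surjectivity of $H_i(f;\Q)$ by combining Poincar\'e duality with the projection formula $H_i(f)(f^*x\cap[M])=x\cap H_n(f)([M])=d\,(x\cap[N])$ and then dividing by $d$ over $\Q$. The only difference is cosmetic (you phrase the key identity as $H_i(f)\circ D_M\circ f^*=d\cdot D_N$, while the paper exhibits an explicit preimage $\tfrac{1}{d}\beta$ of a given class $\alpha$).
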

\begin{proof}
Clearly it suffices to prove the lemma for $0<i<n$. Let $f\colon M\to N$ be a map of non-zero degree $d$ and $\alpha\in H_i(N;\Q)$. Consider the preimage under the Poincar\'e duality isomorphism $PD_N^{-1}(\alpha)\in H^{n-i}(N;\Q)$ and then the image $H^{n-i}(f)(PD_N^{-1}(\alpha))\in H^{n-i}(M;\Q)$. Let the homology class
\[
\beta:=PD_MH^{n-i}(f)PD_N^{-1}(\alpha)=H^{n-i}(f)(PD_N^{-1}(\alpha))\cap[M]\in H_{i}(M;\Q).
\]
Then we obtain
\[
\begin{aligned}
H_i(f)(\beta)&=H_i(f)(H^{n-i}(f)(PD_N^{-1}(\alpha))\cap[M])\\
&=PD_N^{-1}(\alpha)\cap H_i(f)([M])\\
&=d\cdot PD_N^{-1}(\alpha)\cap[N]\\
&=d\cdot PD_N(PD_N^{-1}(\alpha))\\
&=d\cdot\alpha.
\end{aligned}
\]
That is, 
\[
H_i(f)\biggl(\frac{1}{d}\cdot\beta\biggl)=\alpha,
\]
which means that $H_i(f)$ with rational coefficients is surjective and the lemma follows.
\end{proof}

In higher dimensions, the domination relation is not anymore a total ordering as the following example shows:

\begin{ex}\label{ex:non-order}
The 3-sphere $S^3$ is a 2-fold cover of the projective plane $\RP^3=S^3/\Z_2$. For any $n$-manifold $M$, there is a degree one pinch map $M\cong M\#S^n\to S^n$ (as in Proposition \ref{p:surfaceorder}; see \eqref{pinch}). Hence, 
\begin{equation}\label{eq:non-total}
S^3\geq_2\RP^3\geq_1 S^3,
\end{equation}
while of course $S^3$ and $\RP^3$ are not homotopy equivalent.
\end{ex}

Naturally, the dominations given by \eqref{eq:non-total} raise the following:

\begin{prob}\label{p:Hopfgeneral}
Suppose that $M\geq_1 N\geq_1 M$. Are $M$ and $N$ homotopy equivalent?
\end{prob}

This is tightly related to the next long-standing problem of Hopf\index{Hopf!problem}:

\begin{prob}\cite[Problem 5.26]{Kirby}\label{p:Hopf5.26}
Is every self map of degree $\pm1$ a homotopy equivalence?
\end{prob}

At the group theoretic level one has the following corresponding concept:

\begin{defn}\label{d:Hopf}
A group $G$ is called {\em Hopfian}\index{group!Hopfian} \index{Hopfian group} if every surjective endomorphism of $G$ is an isomorphism.
\end{defn}

An affirmative answer to Problem \ref{p:Hopf5.26} holds for the class of aspherical manifolds with Hopfian fundamental groups. Recall that a 
manifold $M$ is called {\em aspherical}\index{aspherical}\index{manifold!aspherical} if all its homotopy groups $\pi_k(M)$ vanish for $k\geq2$.

\begin{prop}\label{p:asphericalHopf}
Let $M$ be an aspherical manifold with Hopfian fundamental group $\pi_1(M)$. Then every map $f\colon M\to M$ with $\deg(f)=\pm1$ is a homotopy equivalence.
\end{prop}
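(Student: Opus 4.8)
The plan is to reduce the statement to a purely group-theoretic assertion about the induced map $f_*\colon\pi_1(M)\to\pi_1(M)$ and then invoke the Hopfian hypothesis. The governing principle is that, since $M$ is aspherical, it is a $K(\pi,1)$ with $\pi:=\pi_1(M)$, so up to homotopy $f$ is completely determined by $f_*$. Concretely, once I know that $f_*$ is an isomorphism, Whitehead's theorem closes the argument: $f$ then induces isomorphisms on all homotopy groups (the higher ones being trivial on both sides), so it is a weak homotopy equivalence between CW complexes and hence a genuine homotopy equivalence. Moreover, since $\pi$ is Hopfian, to prove that $f_*$ is an isomorphism it suffices to prove that it is \emph{surjective}, because a surjective endomorphism of a Hopfian group is automatically injective. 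Thus the single substantive step is the claim that $\deg(f)=\pm1$ forces $G:=\im(f_*)=\pi$.

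To establish this claim I would argue by contradiction using covering space theory. Suppose $G\subsetneq\pi$, and let $p\colon\widehat M\to M$ be the covering corresponding to the subgroup $G\le\pi$, so that $p_*\pi_1(\widehat M)=G$. Then $\widehat M$ is again a connected oriented $n$-manifold (it inherits an orientation by pullback), and it is aspherical because covering maps induce isomorphisms on $\pi_k$ for $k\ge2$. Since $f_*\pi_1(M)=G=p_*\pi_1(\widehat M)$, the lifting criterion for coverings produces a lift $\widehat f\colon M\to\widehat M$ with $p\circ\widehat f=f$, whence $H_n(f)=H_n(p)\circ H_n(\widehat f)$.

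Now I split into two cases according to the index $[\pi:G]$. If $[\pi:G]=\infty$, then $\widehat M$ is a non-compact $n$-manifold, so $H_n(\widehat M;\Z)=0$; consequently $H_n(f)=0$ and $\deg(f)=0$, contradicting $\deg(f)=\pm1$. If $[\pi:G]=k<\infty$, then $\widehat M$ is closed and $p$ is a $k$-sheeted covering, so $|\deg(p)|=k$, and multiplicativity of degree along $f=p\circ\widehat f$ gives $1=|\deg(f)|=k\cdot|\deg(\widehat f)|$; since $k$ is a positive integer and $\deg(\widehat f)\in\Z$, this forces $k=1$, i.e. $G=\pi$. In both cases we contradict the assumption $G\subsetneq\pi$, which proves the claim and hence the surjectivity of $f_*$.

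The main obstacle is precisely this surjectivity step, and within it the point requiring care is the dichotomy on the index: one must use that the top integral homology of a non-compact manifold vanishes in order to eliminate the infinite-index case, and that the degree is multiplicative along the factorization $f=p\circ\widehat f$ with $|\deg p|$ equal to the number of sheets in order to eliminate the finite-index case. Once surjectivity is secured, the Hopfian hypothesis upgrades $f_*$ to an isomorphism and Whitehead's theorem delivers the homotopy equivalence, so no further difficulty remains.
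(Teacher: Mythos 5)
Your proposal is correct and follows essentially the same route as the paper: the paper's Lemma \ref{l:degreegroups} is exactly your covering-space argument (lift $f$ through the cover corresponding to $\im(f_*)$ and use multiplicativity of the degree) to show $f_*$ is surjective when $\deg(f)=\pm1$, after which the Hopfian hypothesis and Whitehead's theorem finish the proof just as you describe. Your write-up is slightly more explicit than the paper's in separating the infinite-index case via vanishing of top homology of a non-compact cover, but the underlying argument is identical.
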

\begin{proof}
We begin our proof by recalling the following well-known lemma:

\begin{lem}\label{l:degreegroups}
Let $M,N$ be manifolds of the same dimension. If $f\colon M\to N$ is a map of non-zero degree, then $[\pi_1(N):f_*(\pi_1(M))]<\infty$, where $f_*\colon\pi_1(M)\to\pi_1(N)$ denotes the induced homomorphism. If, moreover, $\deg(f)=\pm1$, then $[\pi_1(N):f_*(\pi_1(M))]=1$.
\end{lem}
\begin{proof}
Let $\overline{N}\xrightarrow{p}N$ be the covering of degree $\deg(p)=[\pi_1(N):f_*(\pi_1(M))]$, which corresponds to $f_*(\pi_1(M))$. We then lift $f$ to $\overline f\colon M\to\overline N$, and we have $f=p\circ\overline f$. In particular, $\deg(f)=\deg(p)\deg(\overline f)$, which verifies both claims of the lemma.
\end{proof}

Since $f\colon M\to M$ has $\deg(f)=\pm1$, Lemma \ref{l:degreegroups} tells us that $f_*\colon\pi_1(M)\to\pi_1(M)$ is surjective. By assumption $\pi_1(M)$ is Hopfian, hence $f_*$ is an isomorphism. The proposition now follows by Whitehead's classical theorem (see for example~\cite[Theorem 4.5]{Ha}), since $\pi_k(M)=0$ for all $k\geq2$.
\end{proof}

In particular, we obtain the following partial ordering:

\begin{cor}
The domination relation\index{domination}\index{Gromov!order} $\geq_1$ is a partial ordering on the class of aspherical manifolds with Hopfian fundamental groups.
\end{cor}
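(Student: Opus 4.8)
The plan is to verify the three axioms of a partial order — reflexivity, transitivity, and antisymmetry — with the understanding that, as is standard when ordering homotopy classes of manifolds, the relevant notion of equality is homotopy equivalence. Thus $\geq_1$ should be read as a relation on homotopy classes, and antisymmetry means precisely that mutual degree-one domination forces a homotopy equivalence.

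Reflexivity and transitivity will be immediate. For reflexivity, the identity map $\id\colon M\to M$ has degree $1$, so $M\geq_1 M$. For transitivity, if $f\colon M\to N$ and $g\colon N\to W$ both have degree $\pm1$, then the composite $g\circ f$ has degree $\deg(g)\deg(f)=\pm1$, so $M\geq_1 W$; this is just the degree-$\pm1$ case of the transitivity already observed for the general domination relation.

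The substance of the statement is antisymmetry, and here I would lean on the mechanism behind Proposition \ref{p:asphericalHopf}. Suppose $M\geq_1 N$ and $N\geq_1 M$, witnessed by maps $f\colon M\to N$ and $g\colon N\to M$ of degree $\pm1$. By Lemma \ref{l:degreegroups}, both induced homomorphisms $f_*\colon\pi_1(M)\to\pi_1(N)$ and $g_*\colon\pi_1(N)\to\pi_1(M)$ are surjective. Consequently the endomorphism $g_*\circ f_*=(g\circ f)_*\colon\pi_1(M)\to\pi_1(M)$ is surjective, and since $\pi_1(M)$ is Hopfian it is an isomorphism. In particular $f_*$ is injective, and being also surjective it is an isomorphism $\pi_1(M)\cong\pi_1(N)$.

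It then remains to upgrade this $\pi_1$-isomorphism to a homotopy equivalence, which is the step I expect to require the most care (and where the hypotheses genuinely enter). Since $M$ and $N$ are aspherical closed manifolds, they are Eilenberg--MacLane spaces $K(\pi_1,1)$, so the map $f$, inducing an isomorphism on $\pi_1$ and isomorphisms on all (vanishing) higher homotopy groups, is a weak homotopy equivalence, hence a homotopy equivalence by Whitehead's theorem — exactly as in the final step of Proposition \ref{p:asphericalHopf}. This shows $M$ and $N$ are homotopy equivalent and establishes antisymmetry. The only conceptual subtlety is the one just flagged: antisymmetry holds up to homotopy equivalence rather than on the nose, so the corollary is a statement about the induced relation on homotopy classes; all the real work has already been packaged into Lemma \ref{l:degreegroups} and the Hopfian-plus-asphericity argument of Proposition \ref{p:asphericalHopf}.
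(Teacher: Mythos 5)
Your proof is correct and follows essentially the same route as the paper, which deduces the corollary directly from Proposition \ref{p:asphericalHopf}: the paper would apply that proposition to the degree-$\pm1$ self-maps $g\circ f$ and $f\circ g$, whereas you re-run its internal argument (Lemma \ref{l:degreegroups} plus Hopfian plus Whitehead) to show $f$ itself is a homotopy equivalence — the mechanism is identical. Your remark that antisymmetry holds only up to homotopy equivalence, so the relation lives on homotopy classes, matches the paper's framing of the Gromov order.
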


Note that the requirement on $\pi_1$ being Hopfian might be redundant:

\begin{prob}\label{pr:Hopfaspherical}
Is the fundamental group of any aspherical manifold Hopfian?
\end{prob}

Problem \ref{pr:Hopfaspherical} has a complete affirmative answer in dimensions $\leq 3$ and in all other known cases in higher dimensions, such as for nilpotent manifolds. Finally, concerning the case of self-maps, we have the following strong version of Problem \ref{p:Hopf5.26} for aspherical manifolds:

\begin{prob}\cite[Problem 1.2]{NeoHopf}\label{pr:Hopfasphericalstrong}
Is every self-map of an aspherical manifold either a homotopy equivalence (when the degree is $\pm1$) or homotopic to a non-trivial covering?
\end{prob}

If moreover ``homotopy equivalence" is replaced by ``homotopic to a homeomorphism" (in other words, {\em is every self-map of an aspherical manifold homotopic to a covering?}), then Problem \ref{pr:Hopfasphericalstrong} becomes a strong version of the Borel conjecture\index{conjecture!Borel}, which asserts that any homotopy equivalence between closed aspherical manifolds is homotopy to a homeomorphism.

\subsection{Monotone invariants}

\begin{defn}\label{d:monotone}
Let $M$ be a manifold. A non-negative numerical quantity $\iota(M)$ is {\em monotone}\index{invariant!monotone} with respect to the domination relation if
\[
M\geq N \Rightarrow \iota(M)\geq\iota(N).
\]
\end{defn}

Clearly such a number is a homotopy invariant. If one requires furthermore the degree\index{non-zero degree} of the map to be carried in the inequality, i.e.,
\[
M\geq_d N \Rightarrow \iota(M)\geq|d|\iota(N),
\]
then we say that $\iota$ is {\em functorial}\index{semi-norm!functorial}. Amongst the most prominent functorial homotopy invariants is the simplicial volume\index{simplicial volume}.

\begin{defn}\label{d:simplicialv}
Given a topological space $X$ and a homology class $\alpha\in H_n(X;\R)$, the {\em Gromov norm} of $\alpha$ is defined to be
\[
\|\alpha\|_1:=\inf \biggl\{\sum_{j} |\lambda_j| \ \biggl\vert  \ \sum_j \lambda_j\sigma_j\in C_n(X;\R) \ \text{is a singular cycle representing } \alpha  \biggl\}.
\]
If, moreover, $X$ is an $n$-manifold, then the Gromov norm or {\em simplicial volume} of $X$ is given by $\|X\|:=\|[X]\|_1$.
\end{defn}

The functoriality  of the simplicial volume follows easily by the above definition:

\begin{lem}\label{l:monotonesimplicial}
Let $f\colon X\to Y$ be a map between topological spaces. Then $\|\alpha\|_1\geq\|H_n(f)(\alpha)\|_1$ for any $\alpha\in H_n(X;\R)$. In particular, if $M\geq_d N$, then $\|M\|\geq|d|\|N\|$.
\end{lem}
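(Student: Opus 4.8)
The plan is to exploit the fact that a continuous map induces a chain map on singular chains that does not increase the $\ell^1$-norm, and then to pass to the infimum defining the Gromov norm. First I would recall that $f\colon X\to Y$ induces a chain map $f_\#\colon C_n(X;\R)\to C_n(Y;\R)$ by post-composition, namely $f_\#(\sigma)=f\circ\sigma$ on each singular simplex $\sigma$ and extended $\R$-linearly. The crucial and essentially only substantive point is the following norm estimate: for any chain $c=\sum_j\lambda_j\sigma_j$ we have $f_\#(c)=\sum_j\lambda_j(f\circ\sigma_j)$, and hence $\|f_\#(c)\|_1\leq\sum_j|\lambda_j|$. The inequality (rather than equality) occurs precisely because distinct simplices $\sigma_j$ may be carried by $f$ to the same singular simplex in $Y$, producing cancellation that can only decrease the total weight; collecting like terms and applying the triangle inequality for $|\cdot|$ makes this rigorous.

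Next I would use that $f_\#$ is a chain map, so it descends to homology and sends a cycle representing $\alpha$ to a cycle representing $H_n(f)(\alpha)$. Concretely, if $c$ is any singular cycle with $[c]=\alpha$, then $f_\#(c)$ is a cycle with $[f_\#(c)]=H_n(f)(\alpha)$, and by the norm estimate above,
\[
\|H_n(f)(\alpha)\|_1\leq\|f_\#(c)\|_1\leq\|c\|_1.
\]
Taking the infimum over all representatives $c$ of $\alpha$ yields $\|H_n(f)(\alpha)\|_1\leq\|\alpha\|_1$, which is the first assertion.

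For the ``in particular'' clause I would combine this with the behaviour of the Gromov norm under scalar multiplication, namely $\|d\cdot\beta\|_1=|d|\,\|\beta\|_1$ for $d\in\Z$ and $\beta\in H_n(Y;\R)$; this follows directly from the definition, since multiplying every coefficient of a representing cycle by $d$ rescales the $\ell^1$-norm by $|d|$, and conversely dividing by $d$ recovers a representative of $\beta$. Applying the first part to a map $f\colon M\to N$ with $\deg(f)=d$, so that $H_n(f)([M])=d\cdot[N]$, I obtain
\[
\|M\|=\|[M]\|_1\geq\|H_n(f)([M])\|_1=\|d\cdot[N]\|_1=|d|\,\|[N]\|_1=|d|\,\|N\|.
\]

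I do not expect a genuine obstacle here: the entire weight of the argument rests on the norm non-increase of $f_\#$, and the only point requiring a moment's care is the claim that cancellation under post-composition can only help, never hurt — everything else (the chain-map property, descent to homology, and the scaling identity) is routine and follows immediately from the definitions recalled in Definition \ref{d:simplicialv}.
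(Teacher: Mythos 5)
Your argument is correct and is precisely the standard one the paper implicitly invokes when it says the functoriality "follows easily by the above definition": the induced chain map $f_\#$ does not increase the $\ell^1$-norm on chains, it carries representatives of $\alpha$ to representatives of $H_n(f)(\alpha)$, and the degree statement then follows from the scaling identity $\|d\cdot\beta\|_1=|d|\,\|\beta\|_1$. No gaps.
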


There is a tight connection between domination\index{domination} and monotone invariants. For if $M\geq_d M$ for $d>1$, then $\iota(M)=0$ for all {\em finite} functorial invariants $\iota$. Equivalently, the existence of a finite non-zero functorial invariant on $M$ implies that $M$ does not admit any self-maps of degree other than $0$ and $\pm1$. We record some (not necessarily mutually disjoint) examples regarding the simplicial volume:

\begin{ex}\label{ex:sv}\
\begin{itemize}
\item[(1)] The following classes of manifolds have zero simplicial volume: (a) spheres; (b) {\em rationally inessential manifolds}, i.e., manifolds $M$ whose classifying map $c_M\colon M\to B\pi_1(M)$ vanishes in top degree rational homology~\cite{Gromov}; (c) fiber bundles $F\to M\to B$, where $\pi_1(F)$ is Abelian or, more generally, amenable~\cite{Gromov}; (d) products with at least one factor with vanishing simplicial volume~\cite{Gromov}.
\item[(2)] The following classes of manifolds have non-zero simplicial volume: (a) hyperbolic manifolds~\cite{Gromov}; (b) irreducible, locally symmetric spaces of non-compact type~\cite{LS,Bucher}; (c) rationally essential manifolds with hyperbolic fundamental groups~\cite{Gromovessay,Mineyev}; (d) products whose all factors have positive simplicial volume~\cite{Gromov}.
\end{itemize}
\end{ex}

The manifolds in Example \ref{ex:sv} (1d) (resp. (2d)) have zero (resp. non-zero) simplicial volume
 because of the inequalities 
\begin{equation}\label{eq:simplicialproducts}
\|M\|\|N\|\leq\|M\times N\|\leq\left(\begin{array}{c}
   m+n\\
   n\\
 \end{array} \right)\|M\|\|N\|,
\end{equation}
where $m$ and $n$ denote the dimensions of $M$ and $N$ respectively.

Finally, we remark that the study of mapping degree sets\index{set!of mapping degrees} leads to knowledge about non-zero functorial invariants.

\begin{defn}\label{d:degreesets}
Let $M, N$ be $n$-manifolds. The {\em set of degrees of maps} from $M$ to $N$ is defined by
\[
D(M,N):=\{d\in\Z \ | \ d=\deg(f), \ f\colon M\to N\}.
\]
\end{defn}

Fixing $N$, one can define a functorial invariant\index{semi-norm!functorial} by looking at the supremum of all possible absolute degrees of maps to $N$ (see~\cite{CL}):
\begin{equation}\label{eq:mdsemi-norm}
\iota_N(M):=\sup\{|d| \ | \ d\in D(M,N)\}.
\end{equation}
If $D(N,N)\subseteq\{-1,0,1\}$, then clearly $\iota_N(N)=1$. In particular, one obtains non-vanishing functorial invariants on manifolds, where classical functorial invariants are known to be zero. A prominent class given in~\cite{NeoHopf} is that of not virtually (i.e., not finitely covered by) trivial $S^1$-bundles over hyperbolic manifolds for which the simplicial volume indeed vanishes~\cite{Gromov}. 

However, a disadvantage of \eqref{eq:mdsemi-norm} is that $\iota_N(\cdot)$ might not be finite, since the inclusion $D(N,N)\subseteq\{-1,0,1\}$ does not preclude the existence of a manifold $M$ so that $D(M,N)$ is unbounded. It is unknown whether this is the case for the not virtually trivial $S^1$-bundles over hyperbolic manifolds mentioned above, except in dimension three, where Brooks and Goldman~\cite{BG} showed the existence of another non-zero functorial invariant on $\widetilde{SL_2}$-manifolds, namely of the Seifert volume. 

\subsection{Anosov diffeomorphisms}

\begin{defn}\label{d:Anosov}
Suppose $M$ is a smooth $n$-manifold. A diffeomorphism $f\colon M\to M$ is called {\em Anosov}\index{Anosov diffeomorphism}\index{diffeomorphism!Anosov} if there exists a $df$-invariant splitting $TM=E^s\oplus E^u$ of the tangent bundle of $M$, together with constants $\mu\in (0,1)$ and $C > 0$, such that 
\begin{equation*}
\begin{split}
\|df^m(v)\| \leq C\mu^m\|v\|,\ \text{if} \ v \in E^s,\\  
\|df^{m}(v)\| \leq C^{-1}\mu^{-m}\|v\|,\ \text{if} \ v \in E^u,
\end{split}
\end{equation*}
for all $m\in\N$. 
\end{defn}

The invariant distributions $E^s$ and $E^u$ are called {\em stable} and {\em unstable} distributions respectively. An Anosov diffeomorphism $f$ is said to be of {\em codimension $k$}\index{Anosov diffeomorphism!codimension} if $E^s$ or $E^u$ has dimension $k \leq [n/2]$, and {\em transitive}\index{Anosov diffeomorphism!transitive} if there exists a point whose orbit is dense in $M$.

Currently, the only known examples of Anosov diffeomorphisms are of algebraic nature, namely, Anosov automorphisms of manifolds covered by nilmanifolds; see for example~\cite{Mal} about the interpretation of Anosov diffeomorphisms of nilmanifolds at the level of hyperbolic automorphisms of their fundamental groups. We illustrate this with two examples, one in the Abelian case and another one for a nilpotent but not Abelian group:

\begin{ex}\label{ex:AnosovAbelian}
The matrix \[
A=\left(\begin{array}{cc}
   2 & 1\\
   1 & 1\\
 \end{array} \right)
\]
has no eigenvalues which are roots of unity and hence it defines a hyperbolic automorphism of $\Z^2$.
\end{ex}

\begin{ex}\label{ex:AnosovNil}
Let 
\[
G=\langle x_1,x_2,...,x_6 \ |  \ [x_3,x_1]=x_5, \  [x_4,x_1]=[x_3,x_2]=x_6, \ [x_4,x_2]=x_5^3\rangle.
\]
This is a $6$-dimensional torsion-free, $2$-step nilpotent group. Indeed, the lower central series of $G$ is given by
\[
c_0(G)=G, \ c_1(G)=[c_0(G),G]=[G,G]=\langle x_5,x_6\rangle, \ c_2(G)=[c_1(G),G]=1.
\]
In particular, the quotient of $G$ by the isolator subgroup 
\[
\sqrt[G]{c_1(G)}=\{x\in G \ | \ x^k\in c_1(G) \ \text{for some integer} \ k\geq0\}= c_1(G)\cong\Z^2
\]
is isomorphic to $\Z^4\cong G/c_1(G)\cong\langle \overline{x_1},\overline{x_2},\overline{x_3},\overline{x_4}\rangle$.

By~\cite{CKS} and~\cite{Mal}, the group $G$ admits a hyperbolic automorphism. An explicit example is given in \cite[Example 3.5]{Mal}, namely, the automorphism $\phi\colon G\to G$ defined by
\[
x_1\mapsto x_1^2x_2^{-1}, \ x_2\mapsto x_1^{-3}x_2^{2}, \ x_3\mapsto x_3^7x_4^{4}, \  x_4\mapsto x_3^{12}x_4^{7}, \ x_5\mapsto x_5^2x_6^{1}, \  x_6\mapsto x_5^{3}x_6^{2}.
\]
Indeed, the restriction of $\phi$ to $c_1(G)$ is given by
\[
\left(\begin{array}{cc}
   2 & 3\\
   1 & 2\\
 \end{array} \right),
 \]
 and the induced automorphism $\overline\phi$ on $G/c_1(G)$ is given by
 \[
 \left(\begin{array}{rrrr}
   2 & -3 & 0 &0\\
   -1 & 2 & 0& 0\\
   0 & 0 & 7& 12 \\
   0 & 0 & 4& 7\\
 \end{array} \right).
 \]
 Both matrices define hyperbolic automorphisms (on $\Z^2$ and $\Z^4$ respectively), since they do not have eigenvalues which are roots of unity.
\end{ex}

Anosov and Smale~\cite{Sm} conjectured\index{conjecture!Anosov-Smale} that any Anosov diffeomorphism 
 is finitely covered by a diffeomorphism which is topologically conjugate to a hyperbolic automorphism of a nilpotent manifold. 
 
For the $m$-torus $T^m$, $m\geq2$, Franks~\cite{Fr} proved that if $f\colon T^m\to T^m$ is an Anosov diffeomorphism, then the induced isomorphism
\[
H_1(f)\colon H_1(T^m;\R)\longrightarrow H_1(T^m;\R)
\]
has no roots of unity among its eigenvalues. Hirsch~\cite{Hirsch} extended Franks’ result to Anosov diffeomorphisms of a wider class of manifolds which includes all nilmanifolds:

\begin{thm}\cite[Theorem 4]{Hirsch}\label{t:Hirsch4} 
Let $M$ be a manifold with virtually polycyclic fundamental group, whose universal covering has finite dimensional rational homology. If $M$ admits an Anosov diffeomorphism $f\colon M\to M$, then the isomorphism
\begin{equation*}\label{eq.eigenvalue}
H_1(f)\colon H_1(M;\R)\longrightarrow H_1(M;\R)
\end{equation*}
has no roots of unity among its eigenvalues.
\end{thm}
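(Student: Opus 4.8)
The plan is to argue by contradiction, exploiting the interplay between the Lefschetz fixed point formula and the hyperbolic dynamics of $f$, generalising Franks' determinant computation on the torus. Suppose that $H_1(f)$ has an eigenvalue $\zeta$ that is a root of unity, say of order $k$. Since $H_1(f^N)=H_1(f)^N$, the number $1$ becomes an eigenvalue of $H_1(f^N)$ whenever $k\mid N$. I would then produce an iterate $g=f^N$, with $N$ a suitable multiple of $k$, for which the Lefschetz number $L(g)$ vanishes while the Anosov dynamics forces $L(g)\neq 0$; this contradiction proves the theorem. Throughout I use that every power $f^N$ is again Anosov, with the same splitting $TM=E^s\oplus E^u$.

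\smallskip

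\textbf{Dynamical input.} Here I would invoke the standard features of Anosov diffeomorphisms. Each periodic point is hyperbolic, so the fixed points of $f^N$ are isolated; by compactness of $M$ there are finitely many, and each has fixed point index $\pm1$, equal to $\operatorname{sign}\det\!\left(I-d(f^N)_p\right)$. A continuity-and-connectedness argument on the (orientation double cover of the) unstable distribution $E^u$ shows that these indices all carry a common sign $\Delta_N\in\{\pm1\}$, a fact going back to Smale and used by Franks. Consequently $L(f^N)=\Delta_N\cdot\#\operatorname{Fix}(f^N)$. Since $M$ is compact the nonwandering set is nonempty, and the Anosov closing lemma together with the spectral decomposition makes periodic points dense in it; choosing a periodic point of period $p$ and letting $N$ be a common multiple of $k$ and $p$ guarantees $\operatorname{Fix}(f^N)\neq\emptyset$, whence $L(f^N)\neq 0$.

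\smallskip

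\textbf{Algebraic input.} The remaining task is to show that the root of unity on $H_1$ forces $L(f^N)=0$, which is where the hypotheses on $\pi_1(M)$ enter. The model is the torus, where $H_*(M;\Q)=\Lambda^*H_1(M;\Q)$ yields $L(f^N)=\det\!\left(I-H_1(f)^N\right)$, a determinant that vanishes precisely because $k\mid N$ puts $1$ into the spectrum of $H_1(f)^N$. For $M$ with virtually polycyclic fundamental group and with $\widetilde M$ of finite dimensional rational homology, I would extract the analogous statement from the structure theory: replacing $f$ by a further power if necessary so that it lifts to a finite cover, and using a Nomizu/Mostow type identification of the rational homology with Lie-algebra (exterior) data, one obtains a Lefschetz formula of the form $L(f^N)=\det(I-D^N)$, where the spectrum of the operator $D$ on the associated graded Lie algebra contains the spectrum of $H_1(f)$ (the abelianisation being the top graded quotient). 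Hence $1$ being an eigenvalue of $H_1(f^N)$ again forces $L(f^N)=0$, contradicting the dynamical input.

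\smallskip

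The main obstacle is precisely this last step: unlike the torus, $M$ is neither assumed aspherical nor a nilmanifold, so $H_*(M;\Q)$ need not be the exterior algebra on $H_1$, and the eigenvalues on higher homology need not be literal products of those on $H_1$. It is exactly the finite dimensionality of the rational homology of $\widetilde M$ that allows the Lefschetz number to be defined and evaluated, and the virtual polycyclicity that forces the spectrum of $f_*$ on the whole of $H_*(M;\Q)$ to be governed by the spectrum on $H_1$. The dynamical half is standard, the only delicate point there being the common index sign, which I would settle by the classical Anosov index computation on $E^u$.
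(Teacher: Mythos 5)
Your dynamical half is sound and standard (hyperbolic fixed points, a common index sign read off from $E^u$, hence $|\Lambda(f^N)|=\#\mathrm{Fix}(f^N)>0$ for a suitable iterate), but the algebraic half contains a genuine gap, and it is exactly the step you yourself flag as ``the main obstacle.'' Your contradiction requires $\Lambda(f^N)=0$, which you extract from a Lefschetz formula $\Lambda(f^N)=\det(I-D^N)$ with the spectrum of $H_1(f)$ contained in that of $D$. No such formula is available in the stated generality: $M$ is only assumed to have virtually polycyclic fundamental group and a universal cover with finite-dimensional rational homology, so it need not be aspherical, let alone an (infra)nil- or solvmanifold; $H_*(M;\Q)$ is not an exterior algebra on $H_1$ and is not computed by Lie-algebra cohomology. (Consider $M=T^2\times S^2$: the action on $H_2$ has an eigenvalue coming from the sphere factor that is invisible to $H_1(f)$.) An eigenvalue $1$ of $H_1(f^N)$ therefore says nothing about the alternating sum of traces over all degrees, and the vanishing of $\Lambda(f^N)$ does not follow. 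You have also misplaced the role of the hypothesis on $\widetilde M$: the Lefschetz number of a self-map of a closed manifold is always defined, so that hypothesis is not what makes $\Lambda(f^N)$ computable.

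Hirsch's actual argument runs through infinite cyclic coverings rather than Lefschetz numbers of $f$ on $M$. If $H_1(f)$ had an eigenvalue which is a root of unity, then for a suitable power $1$ is an eigenvalue of the integral matrix $H^1(f^{k})$, so there is a nontrivial class $u\in H^1(M;\Z)$ with $(f^*)^{k}(u)=u$; his Theorem 1 --- quoted in this survey as Theorem \ref{t:Hirschcov} --- then asserts that the infinite cyclic covering of $M$ determined by $u$ has \emph{infinite}-dimensional rational homology. On the other hand, the two hypotheses of the present theorem force every covering space of $M$ to have \emph{finite}-dimensional rational homology: the deck group of $\widetilde M$ over such a covering is a subgroup of a virtually polycyclic group, hence virtually polycyclic of finite Hirsch length, and the Cartan--Leray spectral sequence with $E_2=H_*\bigl(\Gamma;H_*(\widetilde M;\Q)\bigr)$ has only finitely many nonzero, finite-dimensional entries. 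These two conclusions are incompatible, which proves the theorem. To rescue a determinant argument along your lines one would first have to reduce to an infra-solvmanifold, and the hypotheses do not allow that reduction.
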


Hirsch's result has remarkable consequences, for instance, on polycyclic manifolds with infinite cyclic first integral cohomology group. In particular, mapping tori of Anosov diffeomorphisms do not themselves admit Anosov diffeomorphisms. Indeed, if
\[
M_A=T^n\rtimes_{A}S^1=\frac{T^n\times[0,1]}{(x,0)\sim(A(x),1)}
\]
is a mapping torus, such that none of the eigenvalues of $A\in\SL(n;\Z)$ is a root of unity, then $H^1(M_A;\Z)\cong H_1(M_A;\Z)/\mathrm{Tor}H_1(M_A;\Z)\cong\Z$. 

Ruelle-Sullivan \cite{RS} found an interesting obstruction related to the codimension of an Anosov diffeomorphism:

\begin{thm}\cite[Corollary pg. 326]{RS}\label{t:RS}
If $f\colon M\to M$ is a codimension\index{Anosov diffeomorphism!codimension} $k$ transitive Anosov diffeomorphism\index{Anosov diffeomorphism!transitive} with orientable invariant distributions, then there is a non-trivial cohomology class $\alpha\in H^k(M;\R)$ and a positive real number $\lambda\neq 1$ such that $H^k(f)(\alpha) = \lambda\cdot\alpha$. In particular,  $H^k(M;\R)\neq 0$.
\end{thm}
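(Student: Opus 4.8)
The plan is to realise the required class geometrically, as a Ruelle--Sullivan current built from one of the invariant foliations, and then to transport it to cohomology by Poincar\'e duality. Assume, without loss of generality, that the distribution of dimension $k$ is $E^s$, so that $\dim E^u=n-k$; I will produce a class in $H_{n-k}(M;\R)\cong H^k(M;\R)$. First I would invoke the stable/unstable manifold theorem for Anosov diffeomorphisms to integrate the (orientable) distributions $E^s$ and $E^u$ to everywhere transverse foliations $\mathcal F^s$ and $\mathcal F^u$ with smooth, coherently oriented leaves of dimensions $k$ and $n-k$. The local product structure of an Anosov system then identifies a neighbourhood of each point with a product of a plaque of $\mathcal F^u$ and a plaque of $\mathcal F^s$.

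Next I would recall the de Rham--Plante mechanism: a foliation equipped with an oriented leafwise volume and a holonomy-invariant transverse measure $\nu$ defines a closed current $C$ of dimension equal to that of the leaves, by integrating a form over plaques and then integrating against $\nu$; holonomy invariance makes the definition chart-independent and, via leafwise Stokes, forces $C(d\eta)=0$, so that $[C]$ lives in $H_{\dim\mathcal F}(M;\R)$. Applying this to $\mathcal F^u$ produces the candidate class $[C^u]\in H_{n-k}(M;\R)$, and symmetrically $\mathcal F^s$ produces a dual $k$-current $C^s$.

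The heart of the argument, and the step I expect to be the main obstacle, is the construction of these transverse measures. Here transitivity and uniform hyperbolicity are essential: via Markov partitions and the associated Gibbs / measure-of-maximal-entropy theory (Sinai--Bowen--Ruelle, Margulis), the unstable foliation of a transitive Anosov diffeomorphism carries a holonomy-invariant transverse measure $\nu^u$, unique up to scaling, and likewise $\mathcal F^s$ carries $\nu^s$. Because $f$ maps unstable leaves to unstable leaves preserving orientation, it acts on such transverse measures, and by uniqueness $f_*\nu^u=\lambda\,\nu^u$ for a single $\lambda>0$; uniform expansion along $E^u$, equivalently positivity of the topological entropy ($\lambda=e^{\pm h_{\mathrm{top}}}$), rules out $\lambda=1$. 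Transporting this through the current gives $f_*C^u=\lambda^{\pm1}C^u$, so $[C^u]$ is an eigenvector of $f_*$ on $H_{n-k}(M;\R)$ with eigenvalue different from $1$.

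It remains to see that $[C^u]\neq0$, and for this I would pair it against the stable current. Since $\mathcal F^u$ and $\mathcal F^s$ are transverse of complementary dimension and $\nu^u,\nu^s$ are positive, the homological intersection number $[C^u]\cdot[C^s]$ is computed by integrating the local product of the two transverse measures over $M$ and is therefore strictly positive; as intersection numbers are homotopy invariants, $[C^u]\neq0$. Finally, since an Anosov diffeomorphism has degree $\pm1$, Poincar\'e duality is natural for $f$ and intertwines $f_*$ on $H_{n-k}(M;\R)$ with $H^k(f)=f^*$ on $H^k(M;\R)$ up to the sign $\deg f$; setting $\alpha=PD_M^{-1}[C^u]$ yields a nonzero class with $H^k(f)(\alpha)=\lambda'\alpha$ for a positive real $\lambda'\neq1$ (passing to $f^2$ if necessary to absorb a sign), and in particular $H^k(M;\R)\neq0$.
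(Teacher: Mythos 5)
This statement is quoted in the paper directly from Ruelle--Sullivan \cite[Corollary pg.~326]{RS} without an independent proof, and your argument is precisely a reconstruction of their proof: foliation cycles (closed currents) built from the oriented invariant foliations and the holonomy-invariant transverse measures coming from Markov partitions and the maximal-entropy/Gibbs theory, scaling under $f$ by $e^{\pm h_{\mathrm{top}}}\neq 1$, non-triviality via the intersection pairing of the stable and unstable currents, and Poincar\'e duality to pass to $H^k(M;\R)$. The sketch is correct and takes essentially the same approach as the source the paper relies on.
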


\section{The Gromov order for Thurston geometries in dimensions $\leq4$}

As explained in Example \ref{ex:non-order}, the domination relation in dimensions greater than two does not define an ordering of all manifolds in the usual sense. We thus need to find an alternative natural and meaningful method to order manifolds. 
In dimension three, such a method was proposed by S. Wang~\cite{Wangorder} following Thurston's geometrisation picture. We will first review Wang's ordering of Thurston's geometries, together with an extension of it to all 3-manifolds~\cite{KN}, and then describe an ordering of the 4-dimensional aspherical geometries. Our main reference is~\cite{Neoorder}.

\subsection{Classification of Thurston's geometries}\label{ss:Thurstonclassification}
We begin our discussion by recalling briefly the classification of Thurston geometries\index{Thurston geometry} in dimensions $\leq4$, together with some properties that we will need in our proofs.

Suppose $\mathbb{X}^n$ is a complete simply connected Riemannian manifold of dimension $n$. We will say that a manifold\index{manifold!geometric} $M$ is an {\em $\mathbb{X}^n$ manifold}, or is {\em modeled on $\mathbb{X}^n$}, or {\em carries the $\mathbb{X}^n$ geometry} in the sense of Thurston, if it is diffeomorphic to a quotient of
$\mathbb{X}^n$ by a lattice $\Gamma$ in the group of isometries of $\mathbb{X}^n$ 
(where $\Gamma=\pi_1(M)$). 
Two geometries $\mathbb{X}^n$ and $\mathbb{Y}^n$ are the same whenever there exists a diffeomorphism $\psi \colon \mathbb{X}^n
\longrightarrow \mathbb{Y}^n$ and an isomorphism $\mathrm{Isom}(\mathbb{X}^n) \longrightarrow \mathrm{Isom}(\mathbb{Y}^n)$ which sends each $g \in \mathrm{Isom}(\mathbb{X}^n)$ to $\psi \circ g \circ \psi^{-1} \in \mathrm{Isom}(\mathbb{Y}^n)$.

\subsection*{Dimension one}

The circle $S^1=\R/\Z$ is the only $1$-dimensional manifold and is modeled on $\R$.

\subsection*{Dimension two} 

Surfaces $\Sigma_g$, $g\geq0$, have been already discussed in Section \ref{s:preliminary}: For $g=0$ we have the 2-sphere $\Sigma_0=S^2$ (modeled on $S^2$), for $g=1$ the 2-torus $\Sigma_1=T^2=\R^2/\Z^2$ (modeled on $\R^2$) and for $g\geq2$ hyperbolic surfaces $\Sigma_g=\mathbb{H}^2/\pi_1(\Sigma_g)$ (modeled on $\mathbb{H}^2$), where
\[
\pi_1(\Sigma_g)=\langle a_1,b_1,...,a_g,b_g \ | \ [a_1,b_1]\cdots[a_g,b_g]=1\rangle.
\]
Table \ref{table:2geom} summarises the geometries in dimension two.
\begin{table}[!ht]
\centering
{\small
\begin{tabular}{c|c}
Type & Geometry $\mathbb{X}^2$\\
\hline
         Spherical   & $S^2$\\
         Euclidean    & $\R^2$ \\
Hyperbolic & $\mathbb{H}^2$\\            
\end{tabular}}
\vspace{9pt}
\caption{{\small The $2$-dimensional Thurston geometries}}\label{table:2geom}
\end{table}

\subsection*{Dimension three}

Thurston proved that there exist eight homotopically unique geometries: $\mathbb{H}^3$, $Sol^3$,
$\widetilde{SL_2}$, $\mathbb{H}^2 \times \R$, $Nil^3$, $\R^3$, $S^2 \times \R$ and $S^3$. In Table \ref{table:3geom}, we list the finite covers for manifolds in each of those geometries (see~\cite{Thu,Scott,Agol}). 

\begin{table}[!ht]
\centering
{\small
\begin{tabular}{r|l}
Geometry $\mathbb{X}^3$ & $M$ is finitely covered by...\\
\hline
$\mathbb{H}^3$     & a mapping torus of a hyperbolic surface with pseudo-Anosov monodromy\\
$Sol^3$            & a mapping torus of $T^2$ with hyperbolic monodromy\\
$\widetilde{SL_2}$ & a non-trivial $S^1$ bundle over a hyperbolic surface\\  
$Nil^3$            & a non-trivial $S^1$ bundle over $T^2$\\
$\mathbb{H}^2 \times \R$ & a product of $S^1$ with a hyperbolic surface\\
$\R^3$             & the $3$-torus $T^3$\\
$S^2 \times \R$    &  the product $S^2 \times S^1$\\
$S^3$              & the $3$-sphere $S^3$
\end{tabular}}
\newline
\caption{{\small Finite covers of Thurston geometric 3-manifolds.}}\label{table:3geom}
\end{table}

\subsection*{Dimension four}
The $4$-dimensional Thurston's geometries were classified by Filipkiewicz in his thesis~\cite{Filipkiewicz}. In Table \ref{table:4geom}, we list the geometries that are realised by compact manifolds, following~\cite{Wall1,Wall2} and~\cite{Hillman}. In the remainder of this paragraph we will mainly concentrate on the aspherical geometries.

\begin{table}[!ht]
\centering
{\small
\begin{tabular}{r|l}
Type  & Geometry $\mathbb{X}^4$\\
\hline
Hyperbolic & $\mathbb{H}^4$, $\mathbb{H}^2(\mathbb{C})$\\        
                Solvable  & $Nil^4$, 
$Sol^4_{m \neq n}$, $Sol^4_0$,
             $Sol^4_1$, $Sol^3\times\R$, $Nil^3\times\mathbb{R}$,  $\R^4$\\
             Compact &  $S^4$, $\mathbb{CP}^2$, $S^2\times S^2$\\
                 Mixed  products  &  $S^2\times\mathbb{H}^2$, $S^2\times \R^2$,  $S^3 \times \R$, 
 $\mathbb{H}^3\times\mathbb{R}$,
           $\mathbb{H}^2\times\mathbb{R}^2$, $\mathbb{H}^2\times\mathbb{H}^2$,
              $\widetilde{SL_2}\times\mathbb{R}$ \\
\end{tabular}}
\newline
\caption{{\small The 4-dimensional Thurston geometries with compact representatives.}}\label{table:4geom}
\end{table}

Manifolds modeled on a geometry of type $\mathbb{X}^3 \times \R$ satisfy the following property:

\begin{thm}\cite[Sections 8.5 and 9.2]{Hillman}\label{t:hillmanproducts}
 Let $\mathbb{X}^3$ be a $3$-dimensional 
 geometry. A $4$-manifold that carries the geometry $\mathbb{X}^3 \times \R$ is finitely
covered by a product $N \times S^1$, where $N$ is a 
$3$-manifold modeled on $\mathbb{X}^3$.
\end{thm}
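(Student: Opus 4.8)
The plan is to realise $M$ as a quotient $(\mathbb{X}^3\times\R)/\Gamma$, where $\Gamma=\pi_1(M)$ is a cocompact lattice in $G:=\mathrm{Isom}(\mathbb{X}^3\times\R)$, and then to split off the distinguished $\R$-factor at the level of $\Gamma$ after passing to a finite-index subgroup. First I would pin down the structure of $G$. Since the $\R$-summand is a $1$-dimensional flat de Rham factor while $\mathbb{X}^3$ is $3$-dimensional, no isometry can interchange it with the $\mathbb{X}^3$-directions; when $\mathbb{X}^3$ is one of $\mathbb{H}^3,\widetilde{SL_2},Nil^3,Sol^3,S^3$ this factor is canonical and $G=\mathrm{Isom}(\mathbb{X}^3)\times\mathrm{Isom}(\R)$, so the translations of $\R$ form a central subgroup $\R\trianglelefteq G$. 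In the three cases where $\mathbb{X}^3$ already carries a flat factor ($\mathbb{H}^2\times\R$, $\R^3$, $S^2\times\R$) the flat directions combine into $\R^2$ (or $\R^4$); here I would invoke Bieberbach's theorem to see that a finite-index subgroup of $\Gamma$ contains the translation lattice of the flat factor and acts trivially on it, and then select a rank-one central translation subgroup $\R$ that splits off, reducing to the previous situation.

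Next I would intersect the lattice with this central line. Writing $A:=\Gamma\cap\R$, the point is that after passing to a finite-index $\Gamma_0\leq\Gamma$, $A$ is infinite cyclic and cocompact in $\R$ and central in $\Gamma_0$, while the projection $\Gamma_0\to\mathrm{Isom}(\mathbb{X}^3)$ is discrete with image a cocompact lattice $Q$. By Selberg's lemma I may assume $Q$ is torsion-free, so $N:=\mathbb{X}^3/Q$ is a closed $\mathbb{X}^3$-manifold and one obtains a central extension
\[
1\longrightarrow A\cong\Z\longrightarrow\Gamma_0\longrightarrow Q=\pi_1(N)\longrightarrow1,
\]
that is, a circle bundle $S^1\to M_0\to N$ with $M_0$ a finite cover of $M$.

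Finally I would show that this bundle is virtually trivial. Its Euler class is the obstruction $e\in H^2(Q;\Z)$ to splitting the extension, and here the product structure of the geometry enters decisively: because $\R$ is a \emph{direct} factor of $\mathbb{X}^3\times\R$, the second projection restricts to a homomorphism $p_2\colon\Gamma_0\to\R$ whose restriction to the central $A$ is the inclusion $A\hookrightarrow\R$. Pushing the extension out along $A\hookrightarrow\R$ and using $p_2$ as a retraction shows that the real Euler class vanishes, so $e$ is torsion, say of order $m$. Pushing out instead along multiplication by $m$ on $\Z$ then embeds $\Gamma_0$ as a finite-index subgroup of the trivial extension $\Z\times Q$, and restricting to a further finite-index subgroup yields a genuine direct product $\Z\times Q_1$ with $Q_1\leq Q$ of finite index. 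Geometrically this means $\Gamma_0$ has a finite-index subgroup of the form $A'\times s(Q_1)$, with $A'$ acting by translations of $\R$ and $s$ a homomorphic section; since an $\R$-valued flat bundle over $N_1=\mathbb{X}^3/Q_1$ is always trivial (its fibre $\R$ is contractible), the corresponding cover of $M$ is diffeomorphic to $N_1\times S^1$, as required.

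The main obstacle is the middle step, namely controlling how the cocompact lattice $\Gamma$ sits along the central line: one must prove that $\Gamma\cap\R$ is nontrivial (equivalently, that the projection to $\mathrm{Isom}(\mathbb{X}^3)$ is discrete), after which a fibration argument forces $A$ to be a cocompact $\Z$. This is transparent when the $\R$-factor is canonical, but in the three flat cases the correct rank-one $\R$ is not canonically given and an arbitrary choice can meet $\Gamma$ trivially, so one is genuinely forced through Bieberbach's theorem. Once the central extension is in place, the torsion vanishing of its Euler class — guaranteed precisely by the retraction $p_2$ coming from the direct-factor $\R$ — is exactly what separates the product geometries $\mathbb{X}^3\times\R$ from the twisted Seifert-type geometries, where the analogous bundle has non-zero Euler number, and it is at this point that the hypothesis is used in an essential way.
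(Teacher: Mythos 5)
The paper does not prove this statement itself --- it is quoted from Hillman's book (Sections~8.5 and~9.2), so there is no internal proof to compare against; I can only assess your argument on its own terms. Your overall strategy (split a central $\Z$ off the lattice, form the central extension over a lattice in $\mathrm{Isom}(\mathbb{X}^3)$, and kill the Euler class using the retraction coming from the second projection) is the standard one and the last step is essentially right. But there are two genuine gaps. The first is the one you yourself flag and then leave open: that $\Gamma\cap\R$ is a cocompact lattice in the central $\R$, equivalently that the projection to $\mathrm{Isom}(\mathbb{X}^3)$ is discrete. This is exactly where the content lies, and it is not ``transparent'' even when the $\R$-factor is canonical: a cocompact lattice in a product $G_1\times G_2$ can meet a factor trivially in general, so one must invoke geometry-specific structure theorems --- Auslander's theorem on the intersection of a lattice with the radical for $\mathbb{H}^3\times\R$, Mostow's theorem that a lattice in a solvable Lie group meets the nilradical in a lattice \emph{plus} an integrality argument (the $1$-eigenspace of the induced integral monodromy matrix is rational) for $Sol^3\times\R$, and Bieberbach for the flat cases. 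Without at least naming these inputs the proof does not close.

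The second gap is an actual error in your case division. You assert that for $\mathbb{X}^3=\widetilde{SL_2}$ and $Nil^3$ the $\R$-factor is canonical and the intersection with $\Gamma$ is transparent. It is not: the identity component of $\mathrm{Isom}(\widetilde{SL_2})$ (resp.\ $\mathrm{Isom}(Nil^3)$) already has centre $\R$ coming from the fibre direction of $\mathbb{X}^3$, so the connected isometry group of $\mathbb{X}^3\times\R$ has a \emph{two}-dimensional centre $\R^2$, and a cocompact lattice can meet the coordinate line $\{1\}\times\R$ trivially. Concretely, if $\Gamma_N\leq\mathrm{Isom}(\widetilde{SL_2})$ is a lattice with central fibre class $z$ and $\alpha$ is irrational, then $\Gamma=\Gamma_N\cdot\langle(z^\alpha,1)\rangle$ is a cocompact lattice with $\Gamma\cap(\{1\}\times\R)=1$. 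So these geometries must be treated exactly like your ``flat'' cases: one shows $\Gamma$ meets the full $\R^2$ in a rank-two lattice and then chooses the splitting line inside the centre \emph{depending on} $\Gamma$ (e.g.\ by normalising the extension class in $H^2(Q;\Z^2)$ to the form $(e,0)$), rather than using the geometrically distinguished $\R$. There are also small fixable points --- the translations of $\R$ are only central in $G$ after passing to the index-two subgroup that does not reflect the $\R$-factor, and Selberg's lemma needs a word of justification for the non-linear group $\mathrm{Isom}(\widetilde{SL_2})$ --- but the two items above are the substantive ones.
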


Manifolds modeled on the geometry $\mathbb{H}^2 \times \mathbb{H}^2$ are either virtual products of two hyperbolic
surfaces or not even (virtual) surface bundles. These two types are distinguished by the names {\em reducible} and {\em irreducible}
$\mathbb{H}^2 \times \mathbb{H}^2$ geometry respectively; see~\cite[Section 9.5]{Hillman} for further details.

A class of 4-dimensional geometries that motivates some new phenomena with respect to the domination problem, especially the property {\em group (infinite-index) presentable by products} (see Definition~\ref{d:PP} and Proposition~\ref{p:sol&sol} below, as well as Section~\ref{monotonicityKodaira}) is that of solvable non-product geometries $Nil^4$, 
$Sol^4_{m \neq n}$, $Sol^4_0$ and $Sol^4_1$. Let us first recall the model Lie groups of those geometries together with some properties. 

The nilpotent Lie group $Nil^4$ is  the semi-direct product $\R^3 \rtimes \R$, where $\R$ acts on $\R^3$ by
\[
t \mapsto 
\left(\begin{array}{ccc}
   1 & e^t & 0 \\
   0 & 1 & e^t \\
   0 & 0 & 1   \\
\end{array} \right).
\]

\begin{prop}\cite[Prop. 6.10]{NeoIIPP}\label{p:nil4}
A $Nil^4$ manifold $M$ is finitely covered by a non-trivial $S^1$ bundle over a $Nil^3$ manifold and the center of $\pi_1(M)$ remains infinite cyclic in finite covers.
\end{prop}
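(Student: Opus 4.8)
The plan is to read the whole statement off the Lie-algebraic structure of $Nil^4$ and to treat its two assertions separately. Write $\mathfrak{n}$ for the Lie algebra of $Nil^4=\R^3\rtimes\R$; in a suitable basis $t,e_3,e_2,e_1$ the only non-zero brackets are $[t,e_3]=e_2$ and $[t,e_2]=e_1$, so the lower central series reads $\mathfrak{n}\supset\langle e_1,e_2\rangle\supset\langle e_1\rangle\supset 0$. In particular the centre $Z(Nil^4)=\exp\langle e_1\rangle\cong\R$ is one-dimensional, while the quotient $\mathfrak{n}/\langle e_1\rangle$ retains the single bracket $[t,e_3]=e_2$; that is, $Nil^4/Z(Nil^4)\cong Nil^3$ is the Heisenberg group. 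These two facts, a one-dimensional centre and a Heisenberg quotient, will drive everything.

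For the first assertion I would first reduce to an honest nilmanifold. Since $\Gamma=\pi_1(M)$ is a lattice in $\mathrm{Isom}(Nil^4)=Nil^4\rtimes C$ with $C$ the compact stabiliser of a point, the theory of almost-crystallographic groups (Auslander) gives that the translation lattice $L:=\Gamma\cap Nil^4$ has finite index in $\Gamma$, so the finite cover $M'=Nil^4/L$ is a genuine $Nil^4$-nilmanifold. For the nilpotent lattice $L$ one has $Z(L)=L\cap Z(Nil^4)$, a lattice in $Z(Nil^4)\cong\R$, hence $Z(L)\cong\Z$, while the image of $L$ in $Nil^4/Z(Nil^4)\cong Nil^3$ is a lattice $\pi_1(N)$ for some $Nil^3$-manifold $N$. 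Thus $L$ sits in a central extension
\[
1\longrightarrow\Z\longrightarrow L\longrightarrow\pi_1(N)\longrightarrow 1,
\]
which I would realise as the fundamental-group sequence of a circle bundle $S^1\to E\to N$. As $N$ and $S^1$ are aspherical, so is $E$, a closed aspherical $4$-manifold with $\pi_1(E)=L$, and by Malcev/nilmanifold rigidity $E$ is diffeomorphic to $M'$. Finally the bundle is non-trivial: a splitting would force $L\cong\Z\times\pi_1(N)$, whence $Z(L)\supseteq\Z\times Z(\pi_1(N))\cong\Z^2$, contradicting $Z(L)\cong\Z$.

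For the second assertion I must show $Z(\Gamma'')\cong\Z$ for \emph{every} finite-index $\Gamma''\le\Gamma$, not only for pure lattices. Put $L''=\Gamma''\cap Nil^4$, a finite-index nilpotent lattice with $Z(L'')\cong\Z$. Any central element of $\Gamma''$ centralises $L''$, so $Z(\Gamma'')\cap L''\subseteq Z(L'')\cong\Z$; as this has finite index in the torsion-free group $Z(\Gamma'')$, the latter is either infinite cyclic or trivial. To exclude the trivial case I would use that $M$, hence every cover, is oriented, so the holonomy of $\Gamma''$ acts by orientation-preserving automorphisms of $Nil^4$. Grading $\mathfrak{n}$ by its lower central series and writing a compact automorphism with eigenvalues $a,d\in\{\pm1\}$ on the top quotient $\mathfrak{n}/\langle e_1,e_2\rangle$, one computes that it acts on $\langle e_1\rangle$ by $a^2d=d$ while its total determinant on $\mathfrak{n}$ equals $a^4d^3=d$; hence orientation-preserving forces trivial action on $Z(Nil^4)$. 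Since a central translation is conjugated precisely through the holonomy, the subgroup $L''\cap Z(Nil^4)$ is then conjugation-invariant, i.e.\ central in $\Gamma''$, giving $Z(\Gamma'')\cong\Z$.

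The main obstacle is exactly this last point: controlling the holonomy action on the one-dimensional centre. A priori a finite-order automorphism of $Nil^4$ can act by $-1$ on $Z(Nil^4)$ (e.g.\ $t\mapsto-t,\ e_3\mapsto-e_3$), which would annihilate the centre of a cover; the whole assertion rests on showing that every such automorphism is orientation-reversing and thus excluded for oriented manifolds. The filiform grading computation linking the total determinant to the action on the centre is where the real work lies, after which the remaining steps (the Auslander reduction, the central-extension description of the bundle, and non-triviality via $Z\cong\Z\neq\Z^2$) are routine.
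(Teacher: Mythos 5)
Your argument is correct, but it follows a genuinely different route from the one the paper relies on. The survey does not reprove this statement: it cites \cite[Prop.~6.10]{NeoIIPP}, and that proof goes through Hillman's structure theory --- a $Nil^4$-manifold is (up to finite cover) a mapping torus of a self-homeomorphism of a $Nil^3$-manifold (Theorem \ref{t:mappingtorisolvable}(2)), and one writes down the explicit presentation of $\pi_1(M)$, the very presentation that reappears in the $Nil^4$ case of Theorem \ref{t:solcomp}, from which the infinite cyclic center $\langle z\rangle$, the quotient $Nil^3$-lattice, and the non-vanishing Euler number $k\geq 1$ are read off directly. You instead work at the level of the Lie algebra and of lattices: Auslander's generalized Bieberbach theorem to reduce to a genuine lattice $L\le Nil^4$, the identities $Z(L)=L\cap Z(Nil^4)\cong\Z$ and $Nil^4/Z(Nil^4)\cong Nil^3$ to produce the circle bundle and its non-triviality, and the filiform determinant computation to control the holonomy action on the center. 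Your version is more conceptual --- it isolates exactly \emph{why} the center cannot grow or die in finite covers (the model group has one-dimensional center, and orientability excludes the only possible sign of the holonomy action on it) --- while the presentation-based proof is more hands-on and meshes with how the proposition is used later in the survey. Both are valid.

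Two points you should shore up. First, when you take the holonomy to be diagonal with eigenvalues $a,d\in\{\pm1\}$ on $\mathfrak{n}/[\mathfrak{n},\mathfrak{n}]$, you need to rule out a rotation with non-real eigenvalues; this follows because the centralizer of $[\mathfrak{n},\mathfrak{n}]$ is the codimension-one characteristic ideal $\langle e_1,e_2,e_3\rangle$, so every automorphism is triangular with respect to the induced flag, and a finite-order triangular real matrix has eigenvalues $\pm1$. With that in place your computation (action on the center $=a^2d=d=$ determinant on $\mathfrak{n}$) is exactly right. Second, the detour through realizing the central extension abstractly as a circle bundle and then invoking nilmanifold rigidity is unnecessary: the projection $Nil^4\to Nil^4/Z(Nil^4)$ already exhibits $Nil^4/L$ as an $S^1$-bundle over $N$ with fiber $Z(Nil^4)/(L\cap Z(Nil^4))$, and your center argument then shows this bundle cannot be trivial.
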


Next, we give the model spaces for the three non-product solvable -- but not nilpotent -- geometries:
Suppose $m$ and $n$ are positive integers and $a > b > c$ are reals such that $a+b+c=0$ and $e^a,e^b,e^c$ are
roots for the polynomial $P_{m,n}(\lambda)=\lambda^3-m\lambda^2+n\lambda-1$. If $m \neq n$, the Lie group $Sol_{m \neq n}^4$ is a semi-direct product $\R^3 \rtimes
\R$, where $\R$ acts on $\R^3$ by
\[
t \mapsto 
\left(\begin{array}{ccc}
   e^{at} & 0 & 0 \\
   0 & e^{bt} & 0 \\
   0 & 0 & e^{ct} \\
\end{array} \right).
\]
Note that, when $m=n$, then $b = 0$ and this corresponds to the product geometry $Sol^3 \times \R$. 

If the polynomial $P_{m,n}$ has two equal roots, then we obtain the model space for the $Sol_0^4$ geometry, which is a semi-direct product $\R^3
\rtimes \R$, where the action of $\R$ on $\R^3$ is given by
\[
t \mapsto 
\left(\begin{array}{ccc}
   e^{t} & 0 & 0 \\
   0 & e^{t} & 0 \\
   0 & 0 & e^{-2t} \\
\end{array} \right).
\]

The main result in~\cite{KL} is that aspherical manifolds (more generally, rationally essential manifolds) are not dominated by direct products if their fundamental group is not presentable by products. 

\begin{defn}\label{d:PP}
A group $G$ is called {\em not presentable by products}\index{group!presentable by products} if for every homomorphism $\varphi\colon G_1\times G_2\longrightarrow G$ with $[G:\mathrm{im}(\varphi)]<\infty$, one of the images $\varphi(G_i)$ is finite.
\end{defn}

Manifolds modeled on one of the geometries $Sol_{m\neq n}^4$ or $Sol_0^4$ fulfill the above property:

\begin{prop}\cite[Prop. 6.13]{NeoIIPP}\label{p:sol&sol}
The fundamental group of a $4$-manifold which is modeled on the geometry $Sol_{m\neq n}^4$ or the geometry $Sol_0^4$ is not presentable by products.
\end{prop}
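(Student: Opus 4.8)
The plan is to translate the statement into a purely group-theoretic assertion about $\Gamma=\pi_1(M)$ and then to exploit the hyperbolicity of the monodromy. First I would record the algebraic structure of $\Gamma$: since $M$ is modeled on $Sol_{m\neq n}^4$ or $Sol_0^4$, its fundamental group is a lattice in the corresponding solvable Lie group $\R^3\rtimes\R$, hence (by the theory of lattices in solvable Lie groups, cf.\ \cite{Hillman}) fits into a split extension
\[
1\longrightarrow \Z^3\longrightarrow \Gamma\longrightarrow \Z\longrightarrow 1,\qquad \Gamma\cong \Z^3\rtimes_A\Z,
\]
where $\Z^3=\Gamma\cap\R^3$ and $A\in \mathrm{GL}(3,\Z)$ is the monodromy induced by a generator of the quotient $\Z$. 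The one quantitative feature I would extract from the geometry is that the eigenvalues of $A$ all have absolute value different from $1$: for $Sol_{m\neq n}^4$ they are the positive reals $e^{at_0},e^{bt_0},e^{ct_0}$ with $a,b,c$ nonzero, while for $Sol_0^4$ they have absolute values $e^{t_0},e^{t_0},e^{-2t_0}$ with $t_0\neq0$. In particular no eigenvalue of $A$ is a root of unity, so $A^k-I$ is invertible over $\Q$ and hence injective on $\Z^3$ for every $k\neq0$. Writing elements of $\Gamma$ as pairs $(v,k)$ with $v\in\Z^3$, $k\in\Z$ and multiplication $(v,k)(w,l)=(v+A^kw,k+l)$, this injectivity is the only input the rest of the argument will use.

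Next I would reformulate the target. A homomorphism $\varphi\colon G_1\times G_2\to\Gamma$ with $[\Gamma:\operatorname{im}\varphi]<\infty$ and both $\varphi(G_i)$ infinite is the same datum as a pair of commuting infinite subgroups $H_1=\varphi(G_1)$, $H_2=\varphi(G_2)$ of $\Gamma$ (they commute elementwise because $G_1$ and $G_2$ do) whose product $H_1H_2=\operatorname{im}\varphi$ has finite index. Thus it suffices to prove that $\Gamma$ contains no two commuting infinite subgroups with finite-index product. Suppose, for contradiction, that $H_1,H_2$ are such subgroups. If both were contained in $\Z^3$, then $H_1H_2\subseteq\Z^3$ would have infinite index; so some $H_i$, say $H_1$, contains an element $h_1=(v,k)$ with $k\neq0$.

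The heart of the argument is then to show that $H_1$ and $H_2$ are both infinite cyclic. Every element of $H_2\cap\Z^3$ has the form $(w,0)$ and commutes with $h_1$; the multiplication rule turns commutation into $A^kw=w$, whence $w=0$ since $A^k-I$ is injective. Therefore $H_2\cap\Z^3=0$, so the projection $\Gamma\to\Z$ restricts to an injection on $H_2$, and as $H_2$ is infinite we get $H_2\cong\Z$, generated by some $(w_0,l_0)$ with $l_0\neq0$. Feeding this back, every $(u,0)\in H_1\cap\Z^3$ commutes with $(w_0,l_0)$, giving $A^{l_0}u=u$ and hence $u=0$; so $H_1\cap\Z^3=0$ and likewise $H_1\cong\Z$. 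Consequently $H_1H_2$ is a quotient of $\Z\times\Z$, i.e.\ an abelian group of torsion-free rank at most $2$. But $H_1H_2$ has finite index in $\Gamma$, so $H_1H_2\cap\Z^3$ has finite index in $\Z^3$ and is thus free abelian of rank $3$; a rank-$3$ group cannot sit inside an abelian group of rank $\leq2$ (equivalently, a finite-index subgroup of $\Gamma$ has Hirsch length $4$, while $H_1H_2$ has Hirsch length $\le2$). This contradiction would complete the proof.

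I expect the only genuinely delicate point to be the first paragraph: pinning down the extension $\Z^3\rtimes_A\Z$ and, above all, justifying that $A$ has no root-of-unity eigenvalue uniformly for both geometries. For $Sol_{m\neq n}^4$ this is immediate. For $Sol_0^4$ one must be careful, because the model action has the repeated real eigenvalue $e^{t_0}$, yet no integral matrix can have a repeated irrational real eigenvalue of absolute value $\neq1$; the correct picture is that an integral monodromy realising this geometry carries a complex-conjugate pair of eigenvalues of modulus $e^{t_0}\neq1$ together with the real eigenvalue of absolute value $e^{-2t_0}\neq1$. In every case the spectrum avoids the unit circle, which is exactly what renders $A^k-I$ invertible. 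Once this hyperbolicity is secured, the remaining steps are elementary manipulations of the semidirect-product multiplication together with a rank count.
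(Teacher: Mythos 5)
Your proof is correct, and since the survey only cites \cite[Prop.\ 6.13]{NeoIIPP} without reproducing an argument, the relevant comparison is with that reference: your centralizer computation in $\Z^3\rtimes_A\Z$, driven by the fact that the monodromy has no root-of-unity eigenvalue (so $A^k-I$ is injective), followed by the Hirsch-length/rank count, is essentially the standard argument used there (going back to Kotschick--L\"oh's treatment of $Sol^3$-groups). The one delicate point you flag --- the eigenvalue structure of an integral monodromy realising $Sol_0^4$ --- is handled correctly.
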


The last solvable model space is an extension of $\R$ by the $3$-dimensional Heisenberg group
\[
 Nil^3 = 
\Biggl\{ \left( \begin{array}{ccc}
  1 & x & z \\
  0 & 1 & y \\
  0 & 0 & 1 \\
\end{array} \right) \biggl\vert
\ x,y,z \in \R \Biggl\}.
\]
Namely, the Lie group $Sol_1^4$ is defined as a semi-direct product $Nil^3 \rtimes \R$, where $\R$ acts on $Nil^3$ by
\[
t \mapsto 
\left(\begin{array}{ccc}
   1 & e^{-t}x & z \\
   0 & 1 & e^{t}y \\
   0 & 0 & 1 \\
\end{array} \right).
\]

Manifolds modeled on this geometry have the following property:

\begin{prop}\cite[Prop. 6.15]{NeoIIPP}\label{p:sol1}
A $Sol_1^4$ manifold $M$ is finitely covered by an $S^1$ bundle over a mapping torus of $T^2$ with hyperbolic monodromy (i.e., over a $Sol^3$ manifold).
\end{prop}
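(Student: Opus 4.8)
The plan is to read off the fibred structure directly from the semidirect-product presentation $Sol_1^4=Nil^3\rtimes\R$, using the crucial fact that the defining $\R$-action fixes the centre of $Nil^3$. First I would single out the central axis $Z=Z(Nil^3)\cong\R$ of the Heisenberg group and check from the displayed automorphisms that each $\alpha_t$ fixes the $z$-coordinate. Two things follow at once: $Z$ is a \emph{central} subgroup of the whole group $Sol_1^4$ (indeed $Z=Z(Sol_1^4)$, since the $\R$-direction acts non-trivially on $Nil^3$), and the induced action of $\R$ on $Nil^3/Z\cong\R^2$ is the hyperbolic action $(x,y)\mapsto(e^{-t}x,e^{t}y)$. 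Hence there is a short exact sequence of Lie groups
\[
1\longrightarrow Z\cong\R\longrightarrow Sol_1^4\longrightarrow Sol^3\longrightarrow1,
\]
where $Sol^3=\R^2\rtimes\R$ with $\R$ acting by $\mathrm{diag}(e^{-t},e^{t})$.

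After passing to a finite-index subgroup I may assume that the lattice $\Gamma=\pi_1(M)$ lies in the group of left translations $Sol_1^4\leq\mathrm{Isom}(Sol_1^4)$; this only replaces $M$ by a finite cover, which is exactly what the statement allows. I would then apply the structure theory of lattices in simply connected solvable Lie groups. The centre $Z$ is contained in the nilradical $Nil^3$, and $\Gamma\cap Nil^3$ is a lattice in $Nil^3$; since the centre of a nilpotent Lie group is rational with respect to any lattice, $\Gamma\cap Z=(\Gamma\cap Nil^3)\cap Z$ is a lattice in $Z$, that is, an infinite cyclic central subgroup $C\cong\Z$. Rationality of $Z$ together with Mostow's closedness results then give that $\overline{\Gamma}:=\Gamma/C$ is a lattice in the quotient $Sol^3$, so that $N:=\overline{\Gamma}\backslash Sol^3$ is a closed $Sol^3$-manifold and we obtain a central extension $1\to\Z\to\Gamma\to\overline{\Gamma}\to1$.

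Finally I would realise this algebra geometrically. Because $Z$ is central, its left and right translations coincide and descend to a smooth $\R$-action on $M=\Gamma\backslash Sol_1^4$; the orbit of any point is $Z/(\Gamma\cap Z)\cong\R/\Z$, so the action factors through a \emph{free} $S^1$-action whose orbit space is $\overline{\Gamma}\backslash(Sol_1^4/Z)=N$. Thus $M\to N$ is a principal $S^1$-bundle over the $Sol^3$-manifold $N$, and since every $Sol^3$-manifold is finitely covered by a mapping torus of $T^2$ with hyperbolic monodromy (Table \ref{table:3geom}), the proposition follows.

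The step I expect to be the main obstacle is the lattice-theoretic one: verifying that $\Gamma\cap Z$ is a genuine (infinite cyclic) lattice in the central $\R$ and that the quotient descends to a lattice in $Sol^3$. This is precisely where one must invoke the rationality of the nilradical, the rationality of the centre inside it, and Mostow's theorem on closedness of $\Gamma Z$, rather than any purely formal manipulation; by contrast, the reduction to $\Gamma\leq Sol_1^4$ via a finite cover and the passage from a central $\R$ to a free $S^1$-action are routine once this input is secured. I would also remark that the very same scheme yields the analogous Proposition \ref{p:nil4}, the only change being that one quotients $Nil^4$ by its centre and the centre quotient then carries the $Nil^3$ geometry.
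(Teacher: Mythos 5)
Your argument is correct, and the key observation is the right one: the $\R$-action defining $Sol_1^4=Nil^3\rtimes\R$ fixes the centre $Z=Z(Nil^3)$ pointwise, so $Z$ is central in $Sol_1^4$ with quotient $\R^2\rtimes_{\mathrm{diag}(e^{-t},e^t)}\R\cong Sol^3$; intersecting a lattice with $Z$ and quotienting then produces the circle bundle. Note that the survey gives no proof of this proposition (it is quoted from~\cite[Prop.~6.15]{NeoIIPP}), so the comparison is with the cited argument. That argument reaches the same central extension $1\to\Z\to\pi_1(M)\to\Z^2\rtimes_A\Z\to1$ ($A$ hyperbolic) via Hillman's structure theory — after a finite cover one writes down the explicit presentation of $\pi_1(M)$ (the one reproduced in Section~5 of this survey for the $Sol_1^4$ case), reads off the central $\langle z\rangle$ and the $Sol^3$-group quotient, and realises the $S^1$-bundle using asphericity and Hillman's Seifert-fibration results. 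Your route replaces this with Lie-theoretic inputs: Mostow's theorem that $\Gamma\cap Nil^3$ is a lattice in the nilradical, Malcev rationality of the centre, closedness of $\Gamma Z$, and the geometric descent of the central translations to a free $S^1$-action. This buys a cleaner, presentation-free construction that manifestly produces a genuine principal bundle rather than a homotopy-theoretic one, at the cost of invoking heavier lattice theory; it also, as you note, uniformises the proof with Proposition~\ref{p:nil4}. The only steps you leave implicit — that a lattice in $\mathrm{Isom}(Sol_1^4)$ is virtually a lattice in $Sol_1^4$ itself, and that one pulls the bundle back over the finite cover of the base realising it as a mapping torus of $T^2$ (Table~\ref{table:3geom}) — are standard and correctly flagged as routine.
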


Every $4$-manifold that carries a solvable non-product geometry is a mapping torus:

\begin{thm}\cite[Sections 8.6 and 8.7]{Hillman}\label{t:mappingtorisolvable} \
 \begin{itemize}
  \item[\normalfont{(1)}] A manifold modeled on the $Sol_0^4$ or the $Sol_{m \neq n}^4$ geometry is a mapping torus of a self-homeomorphism of $T^3$.
  \item[\normalfont{(2)}] A manifold modeled on the $Nil^4$ or the $Sol_1^4$ geometry is a mapping torus of a self-homeomorphism of a $Nil^3$-manifold.
 \end{itemize}
\end{thm}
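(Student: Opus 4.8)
The plan is to exploit the common structure of the four model Lie groups: each is a semi-direct product $G=N\rtimes\R$, where the nilpotent factor $N$ is $\R^3$ for the $Sol_0^4$ and $Sol_{m\neq n}^4$ geometries and is the Heisenberg group $Nil^3$ for the $Nil^4$ and $Sol_1^4$ geometries. In every case $N$ is precisely the nilradical of $G$, the maximal connected nilpotent normal subgroup, and in particular it is characteristic. Since $G\cong\R^4$ is contractible, any manifold modeled on such a geometry is aspherical and, as these solvable geometries have $G$ acting simply transitively on $\mathbb{X}^4=G$ with a compact point-stabiliser which the torsion-free fundamental group must avoid, it can be written as $M=\Gamma\backslash G$ with $\Gamma=\pi_1(M)$ a (cocompact) lattice in $G$. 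My goal is to produce a short exact sequence $1\to\Lambda\to\Gamma\to\Z\to1$ with $\Lambda$ the fundamental group of a $T^3$ (resp. of a $Nil^3$-manifold), and then to realise it geometrically as a fibre bundle over $S^1$.

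First I would invoke Mostow's structure theory for lattices in simply connected solvable Lie groups: the intersection $\Lambda:=\Gamma\cap N$ is a lattice in the nilradical $N$, and the image of $\Gamma$ under the quotient homomorphism $q\colon G\to G/N\cong\R$ is a lattice in $\R$. Cocompactness of $\Gamma$ forces $q(\Gamma)$ to be an infinite discrete subgroup of $\R$, hence $q(\Gamma)\cong\Z$, and we obtain the extension $1\to\Lambda\to\Gamma\xrightarrow{q}\Z\to1$. By the classical Bieberbach/Mal'cev correspondence, $\Lambda$ is then the fundamental group of a closed aspherical $3$-manifold modeled on $N$: a flat torus $T^3$ when $N=\R^3$, and a $Nil^3$-manifold when $N=Nil^3$.

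To promote this algebraic splitting to a genuine mapping torus I would pass back to the Lie group level rather than appeal to abstract asphericity. Because $N$ is normal in $G$ and is normalised by $\Gamma$, the projection $q\colon G\to\R$ descends to a smooth map $\bar q\colon M=\Gamma\backslash G\longrightarrow q(\Gamma)\backslash\R=S^1$. Using the diffeomorphism $G\cong N\times\R$ underlying the semi-direct product, one checks that $\bar q$ is a locally trivial fibre bundle whose fibre is exactly $\Lambda\backslash N=F$, the $3$-manifold identified above. A fibre bundle over $S^1$ is a mapping torus, the monodromy being the return diffeomorphism of $F$ induced by conjugation by any element $t\in\Gamma$ with $q(t)$ a generator of $\Z$; this proves (1) and (2) simultaneously, according to whether $N=\R^3$ or $N=Nil^3$.

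The main obstacle is the first step, namely verifying that $\Lambda=\Gamma\cap N$ is a lattice in $N$ and that $\Gamma/\Lambda$ is infinite cyclic. This is exactly where the solvable (rather than merely polycyclic) structure is used: the analogous statement fails for general normal subgroups, and its validity here rests on $N$ being the characteristic nilradical together with Mostow's theorem that a lattice in a simply connected solvable Lie group meets the nilradical in a lattice and projects to a lattice in the solvable quotient. Once this structural fact is in hand, the identification of $\Lambda$ with $\pi_1(T^3)$ or with the fundamental group of a $Nil^3$-manifold and the fibre-bundle realisation are routine.
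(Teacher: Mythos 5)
The paper offers no proof of this statement --- it is imported verbatim from Hillman's book (Sections 8.6 and 8.7) --- so your argument can only be judged on its own terms. It works cleanly for three of the four geometries but has a genuine gap at $Nil^4$. The load-bearing claim is that ``in every case $N$ is precisely the nilradical of $G$'', which is what entitles you to Mostow's theorem that a lattice meets the nilradical in a lattice and projects to a lattice in $G/N\cong\R$. This is correct for $Sol_0^4$ and $Sol_{m\neq n}^4$ (nilradical $\R^3$) and for $Sol_1^4$ (nilradical $Nil^3$), but it is false for $Nil^4$: that group is itself nilpotent, so its nilradical is all of $G$, and the codimension-one copy of $Nil^3$ you want to fibre over is neither the nilradical nor characteristic. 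Indeed $Nil^4$ contains several non-conjugate connected codimension-one normal subgroups, some isomorphic to $\R^3$ and some to $Nil^3$, and correspondingly a lattice $\Gamma$ (with the presentation recorded in Section 5 of the survey) admits epimorphisms to $\Z$ with abelian kernel ($\langle x,z,t\rangle\cong\Z^3$) as well as with non-abelian kernel ($\langle x,y,z\rangle$, a $Nil^3$-lattice). So for $Nil^4$ your mechanism proves nothing, and a separate argument is needed: one must exhibit a \emph{rational} normal $Nil^3\subset Nil^4$ with respect to $\Gamma$ (equivalently, choose an epimorphism $\Gamma\to\Z$ whose kernel is non-abelian of Hirsch length $3$, then apply Mal'cev's rigidity to identify the kernel as a lattice in $Nil^3$) and only then run your fibration argument. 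This is exactly the case the nilradical formalism was supposed to handle uniformly, so the omission is not cosmetic.

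A secondary, more technical gap: you write $M=\Gamma\backslash G$ with $\Gamma\subset G$ on the grounds that the torsion-free group $\Gamma$ must avoid the compact point-stabiliser. Avoiding the stabiliser only gives a free action; it does not place $\Gamma$ inside $G\subset\mathrm{Isom}(\mathbb{X}^4)$ (compare Bieberbach groups, which are torsion-free, act freely on $\R^n$, and still are not contained in the translation subgroup). For these geometries $\Gamma\cap G$ is of finite index in $\Gamma$, so as written your argument only produces the mapping-torus structure on a finite cover of $M$, whereas the statement (and Hillman's proof, which works directly with the group extensions $1\to\Lambda\to\Gamma\to\Z\to1$ for the actual lattices in the isometry groups) concerns $M$ itself. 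Both points are repairable, but both need to be addressed before the proof is complete.
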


The remaining two aspherical models are  irreducible symmetric geometries, the real and the complex hyperbolic, denoted
by $\mathbb{H}^4$ and $\mathbb{H}^2(\mathbb{C})$ respectively. 

Finally, we will need the following: 

\begin{thm}\cite[Theorem 10.1]{Wall2}\cite[Prop. 1]{Kotschick:4-mfds}\label{t:Wall4Dgeometries}
 If $M$ and $N$ are homotopy equivalent $4$-manifolds modeled on geometries $\mathbb{X}^4$ and $\mathbb{Y}^4$ respectively, then
$\mathbb{X}^4$ and $\mathbb{Y}^4$ are the same.
\end{thm}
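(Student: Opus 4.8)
The plan is to show that the geometry type is itself a homotopy invariant, by exhibiting for each model on Table \ref{table:4geom} homotopy-theoretic data that pins it down; since $M$ and $N$ share all homotopy invariants, any collection of such data that separates the geometries suffices. The coarsest invariant I would use first is \emph{asphericity}: the universal cover of a closed $\mathbb{X}^4$-manifold is (after passing to a finite cover to kill torsion) the model $\mathbb{X}^4$, so $M$ is aspherical exactly when $\mathbb{X}^4$ is contractible. Asphericity is a homotopy invariant, and it splits Table \ref{table:4geom} into the aspherical models ($\mathbb{H}^4$, $\mathbb{H}^2(\mathbb{C})$, the seven solvable geometries, and $\mathbb{H}^3\times\R$, $\mathbb{H}^2\times\R^2$, $\mathbb{H}^2\times\mathbb{H}^2$, $\widetilde{SL_2}\times\R$) and the non-aspherical ones ($S^4$, $\mathbb{CP}^2$, $S^2\times S^2$, $S^2\times\mathbb{H}^2$, $S^2\times\R^2$, $S^3\times\R$). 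It then remains to separate the geometries within each of the two classes.

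For the non-aspherical geometries I would use elementary invariants of the homotopy type of the universal cover together with $\pi_1$. The universal cover is homotopy equivalent to $S^4$ for $S^4$ (so $\pi_2=\pi_3=0$), to $S^3$ for $S^3\times\R$ (so $\pi_2=0$), and to $S^2$ for the remaining four. Among the latter, $\pi_1$ performs the separation: it is finite for $\mathbb{CP}^2$ and $S^2\times S^2$, virtually $\Z^2$ for $S^2\times\R^2$, and a surface group of genus $\geq2$ for $S^2\times\mathbb{H}^2$; and the two simply connected cases $\mathbb{CP}^2$ and $S^2\times S^2$ are told apart by $\dim H^2$ (or by the parity of the intersection form), both homotopy invariants.

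The heart of the matter is the aspherical case. Here homotopy equivalence of $M$ and $N$ is equivalent to an isomorphism $\pi_1(M)\cong\pi_1(N)$, since both are $K(\pi,1)$'s, so it suffices to recover the model from the (isomorphism, hence commensurability) type of the lattice $\pi_1$. I would organize this by the algebraic structure of $\pi_1$. The virtually polycyclic groups single out the solvable geometries, and within them commensurability invariants—the Hirsch length, whether the nilradical is abelian or the Heisenberg group, the dimension of the center, and the Jordan/eigenvalue data of the monodromy on the abelianized nilradical—separate $\R^4$, $Nil^4$, $Nil^3\times\R$, $Sol^3\times\R$, $Sol^4_0$, $Sol^4_1$ and $Sol^4_{m\neq n}$; for example $Nil^4$ and $Nil^3\times\R$ differ in nilpotency class and center dimension, $Sol^4_1$ is the only one with non-abelian nilradical, and $Sol^4_{m\neq n}$, $Sol^4_0$, $Sol^3\times\R$ are distinguished by whether the monodromy has three distinct real eigenvalues, a repeated eigenvalue, or the eigenvalue $1$. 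The product geometries $\mathbb{H}^2\times\R^2$, $\mathbb{H}^3\times\R$, $\widetilde{SL_2}\times\R$ and reducible $\mathbb{H}^2\times\mathbb{H}^2$ carry a normal $\Z$- or $\Z^2$-subgroup whose quotient detects the surface, $3$-manifold, or Seifert structure (the non-triviality of the $S^1$-bundle distinguishes $\widetilde{SL_2}\times\R$ from $\mathbb{H}^2\times\R^2$ via presentability by products). Finally, $\pi_1$ is word hyperbolic only for $\mathbb{H}^4$ and $\mathbb{H}^2(\mathbb{C})$ (every other aspherical model has $\pi_1$ containing $\Z^2$), and these two are separated by the signature, which vanishes for a real hyperbolic $4$-manifold by the proportionality principle but is nonzero for an $\mathbb{H}^2(\mathbb{C})$-manifold by the Hirzebruch relation $3\sigma=\chi>0$.

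I expect the main obstacle to be not the separation of the groups but the \emph{rigidity} needed to upgrade an abstract isomorphism $\pi_1(M)\cong\pi_1(N)$ to sameness of the geometric models, i.e.\ to rule out that a single group is a lattice in two distinct models from the list. For the semisimple geometries this is supplied by Mostow--Prasad rigidity, which promotes a $\pi_1$-isomorphism to an isometry of models; for the solvable geometries one invokes the Mal'cev/Mostow rigidity of virtually polycyclic groups, whereby the lattice determines its solvable Lie hull up to the allowed equivalence. Verifying case by case that no accidental commensurabilities occur across the product and semi-direct-product models is precisely where the bookkeeping concentrates, and this is exactly the verification carried out in \cite{Wall2} and \cite{Kotschick:4-mfds}.
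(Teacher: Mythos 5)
The paper does not prove this statement: Theorem \ref{t:Wall4Dgeometries} is quoted outright from Wall and Kotschick, so there is no internal argument to compare yours against. Your outline is essentially the strategy of those references (split by asphericity, then show that the homotopy type --- equivalently, in the aspherical case, the isomorphism type of the lattice --- pins down the model), and the overall architecture is sound. But as you yourself concede in the last paragraph, the actual content of the theorem is the case-by-case verification that no group is a lattice in two distinct models, and you assert that this "is exactly the verification carried out in \cite{Wall2} and \cite{Kotschick:4-mfds}" rather than carrying it out. So what you have is a correct plan with the hard part outsourced to the very sources being proved.

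Three concrete slips in the details are worth fixing. First, the universal covers of $\CP^2$- and $S^2\times S^2$-manifolds are $\CP^2$ and $S^2\times S^2$ themselves, not spaces homotopy equivalent to $S^2$ (both have non-trivial $H^4$, and $\pi_2(S^2\times S^2)\cong\Z^2$); your separation still goes through because you handle these two by finiteness of $\pi_1$ and by $b_2$, but the statement as written is false. Second, a reducible $\mathbb{H}^2\times\mathbb{H}^2$-lattice is virtually a product of two hyperbolic surface groups and therefore has \emph{no} non-trivial normal abelian subgroup, so it does not fit the ``normal $\Z$- or $\Z^2$-subgroup'' scheme you propose for the product geometries; it has to be singled out differently (trivial center, not virtually solvable, not word hyperbolic, virtually a product of two non-elementary groups). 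Third, presentability by products does not distinguish $\widetilde{SL_2}\times\R$ from $\mathbb{H}^2\times\R^2$: in both cases $\pi_1$ is virtually a genuine direct product with a $\Z$ factor, hence presentable by products (indeed reducible). The correct commensurability invariant is whether the central extension $1\to\Z^2\to\pi_1\to\pi_1(\Sigma_g)\to 1$ is virtually trivial, i.e., the (non-)vanishing of the Euler class of the circle bundle, which survives passage to finite covers.
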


In particular, {\em an aspherical geometric $4$-manifold $M$ is finitely covered by an $\mathbb{X}^4$ manifold if and only if it carries the geometry $\mathbb{X}^4$}.

\subsection{Wang's ordering}

Suppose $M$ is an aspherical $3$-manifold which is not modeled on one of the six aspherical geometries $\mathbb{H}^3$, $Sol^3$,
$\widetilde{SL_2}$, $\mathbb{H}^2 \times \R$, $Nil^3$ or $\R^3$. Then there is a finite family of splitting tori so that $M$ can be cut into pieces, called JSJ pieces (named after Jaco-Shalen and Johannson). $M$ is called a {\em non-trivial graph} manifold if all the JSJ pieces are Seifert. If there is a non-Seifert JSJ piece, then this piece must be hyperbolic by Perelman's proof of Thurston's geometrisation conjecture. In that case, $M$ is called a {\em non-graph} manifold. 

 Wang~\cite{Wangorder} ordered all aspherical $3$-manifolds and Kotschick and I~\cite{KN} extended this to include all rationally inessential $3$-manifolds:

\begin{figure}[ht!]
     \[
\xymatrix{
& & & & \mathbb{H}^2 \times \R \ar[rrr] \ar[rrrd] & & & \R^3 \ar[d]^{(p\leq 1)}\\
\mathbb{H}^3 \ar[r] & \mathrm{(NGRAPH)} \ar[l] \ar[r] &
\mathrm{(GRAPH)} 
\ar[rru] \ar[rrd] \ar[rr] & &
Sol^3 \ar[rrr]^{(p\leq 1)} & & & \#_p(S^2 \times S^1)\\
& & & &\widetilde{SL_2} \ar[rrr] \ar[rrru] & & & Nil^3 \ar[u]_{(p\leq 1)}}
\]
\caption{\small{Ordering $3$-manifolds by maps of non-zero degree~\cite{Wangorder,KN}.}}
\label{f:order3-mfds}
\end{figure}
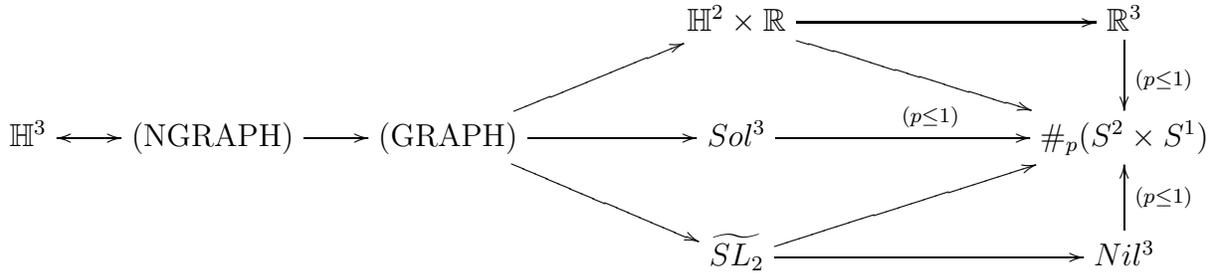
\begin{thm}[Wang's ordering]
\label{t:order3-mfds}
 Let the following classes of $3$-manifolds: 
\begin{itemize}
 \item[\normalfont{(i)}] aspherical\index{aspherical} and geometric, i.e., modeled on one of the six geometries $\mathbb{H}^3$, $Sol^3$, $\widetilde{SL_2}$, $\mathbb{H}^2 \times \R$,
$Nil^3$ or $\R^3$;
 \item[\normalfont{(ii)}] aspherical and non-geometric, i.e., $\mathrm{(GRAPH)}$ non-trivial graph or $\mathrm{(NGRAPH)}$ non-geometric irreducible non-graph;
 \item[\normalfont{(iii)}] rationally inessential, i.e., finitely covered by $\#_p(S^2 \times S^1)$, for some $p\geq 0$.
\end{itemize}
If there exists an oriented path from a class $X$ to another class $Y$ in Figure \ref{f:order3-mfds}, then any manifold in $Y$ is dominated by some manifolds in $X$. Otherwise, no manifold in $Y$ can be dominated by a manifold in $X$\index{Gromov!order}. 
\end{thm}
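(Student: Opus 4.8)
The plan is to split the statement into a \emph{positive} half (for every arrow $X\to Y$ of Figure~\ref{f:order3-mfds} and every $N$ in the class $Y$, produce some $M$ in $X$ with $M\geq N$) and a \emph{negative} half (whenever no oriented path joins $X$ to $Y$, no manifold of $X$ dominates any manifold of $Y$). Since domination is transitive, the positive half reduces to constructing a dominant map for each single arrow; and since every obstruction I will use is a monotone invariant in the sense of Definition~\ref{d:monotone}, the negative half reduces to exhibiting, for each pair of classes not joined by an oriented path, one monotone quantity that is uniformly larger on $Y$ than on $X$ — monotone invariants automatically respect the transitive closure, so it suffices to separate the classes gap by gap.

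For the positive half I would proceed by increasing geometric complexity. The connected-sum pinch maps of \eqref{pinch} give $M\cong M\#S^3\to\#_p(S^2\times S^1)$, yielding all arrows into the rationally inessential class; a source with virtually solvable (hence amenable) fundamental group reaches only $p\leq 1$, whereas the non-amenable Seifert geometries reach every $p$. The arrow $\mathbb{H}^2\times\R\to\R^3$ comes from $\Sigma_g\times S^1\to T^2\times S^1=T^3$ using $\Sigma_g\geq T^2$ (Proposition~\ref{p:surfaceorder}), and $\widetilde{SL_2}\to Nil^3$ is obtained by mapping the hyperbolic base onto $T^2$ and the fibre to the fibre so that the nonzero Euler numbers match. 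The genuinely hard constructions are the three arrows out of $\mathrm{(GRAPH)}$ onto $\mathbb{H}^2\times\R$, $\widetilde{SL_2}$ and $Sol^3$, together with the mutual domination $\mathbb{H}^3\leftrightarrow\mathrm{(NGRAPH)}$ and $\mathrm{(NGRAPH)}\to\mathrm{(GRAPH)}$; these require the explicit cut-and-paste and branched-covering constructions of Wang and of Kotschick and the author \cite{Wangorder,KN}, building a graph (resp.\ mixed) manifold that collapses with nonzero degree onto the prescribed geometric target, and I would import these as the technical core.

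For the negative half I would assemble a dictionary of monotone invariants, each handling a family of gaps. The simplicial volume (Lemma~\ref{l:monotonesimplicial}), positive exactly on $\mathbb{H}^3$ and $\mathrm{(NGRAPH)}$ and vanishing on all graph, Seifert and $Sol^3$ manifolds, shows that nothing below dominates the top block. Rational essentiality, via the classifying map, shows that $\#_p(S^2\times S^1)$ dominates no aspherical target, placing it at the bottom. Virtual solvability, inherited by virtual quotients through Lemma~\ref{l:degreegroups}, forbids $\R^3,Nil^3,Sol^3$ from dominating the non-solvable $\mathbb{H}^2\times\R$ and $\widetilde{SL_2}$ (and again forces $p\leq 1$); the polynomial growth degree of $\pi_1$ separates $\R^3$ from $Nil^3$ and both from the exponentially growing geometries, while Lemma~\ref{l:Betti} applied to $b_1$ disposes of $Nil^3\not\geq\R^3$ and $Sol^3\not\geq Nil^3,\R^3$. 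The Seifert volume of Brooks–Goldman \cite{BG}, a functorial semi-norm positive only on $\widetilde{SL_2}$ among these geometries, isolates $\widetilde{SL_2}$ from below. Finally the $H^1$-cup-length is monotone, since for a nonzero-degree map $f^*$ is injective on top cohomology, so a nontrivial triple product pulls back nontrivially; every degree-one class of a $\widetilde{SL_2}$ manifold is pulled back from its $2$-dimensional base and hence has cup-length $\leq 2$, whereas $T^3$ and $\Sigma_g\times S^1$ have cup-length $3$, giving $\widetilde{SL_2}\not\geq\R^3,\mathbb{H}^2\times\R$.

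The most delicate non-dominations involve the canonical central (fibre) subgroup. For a Seifert geometry with hyperbolic or Euclidean base, the centre of $\pi_1$ is infinite cyclic with $2$-orbifold quotient of cohomological dimension $2$, and this structure persists in finite-index subgroups. If such an $M$ dominated a target whose $\pi_1$ has trivial centre in all finite-index subgroups — namely $Sol^3$, or any nontrivial graph manifold — then by Lemma~\ref{l:degreegroups} the image of the centre would be central in a finite-index subgroup, hence trivial, so $f_*$ would factor through the $2$-orbifold group; as both manifolds are aspherical the map would factor up to homotopy through a $2$-complex and have degree $0$. This single argument gives $\mathbb{H}^2\times\R,\widetilde{SL_2}\not\geq Sol^3$ and, with virtual solvability and rational essentiality for the remaining sources, that no class below $\mathrm{(GRAPH)}$ dominates a nontrivial graph manifold. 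The one gap where the centre survives on both sides, $\mathbb{H}^2\times\R\not\geq Nil^3$, I would close through the functoriality of the Seifert Euler number under the resulting fibre-preserving map: a product ($e=0$) cannot dominate a nontrivial bundle ($e\neq0$). I expect the principal obstacles to be the explicit positive constructions out of $\mathrm{(GRAPH)}$ and between $\mathbb{H}^3$ and $\mathrm{(NGRAPH)}$, and the verification that the central/$2$-orbifold structure of a Seifert group is preserved under passage to finite-index subgroups, so that the factorisation argument applies uniformly.
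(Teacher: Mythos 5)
Your overall architecture is sound, but you should be aware that the paper itself offers no proof of Theorem \ref{t:order3-mfds}: it is stated as a survey result and deferred entirely to \cite{Wangorder} for the dominations and non-dominations among (most) aspherical $3$-manifolds and to \cite{KN} for maps out of $\mathbb{H}^2\times\R$-manifolds and for targets covered by $\#_p(S^2\times S^1)$. Your reconstruction imports the same hard existence constructions from those references and then fills the negative half with a dictionary of monotone invariants, which is faithful in spirit to Wang's strategy but diverges in several specific choices: \cite{KN} rules out $\mathbb{H}^2\times\R\geq\widetilde{SL_2},Nil^3$ by a uniform circle-bundle factorisation through the Euler class rather than by invoking the Brooks--Goldman Seifert volume; if you do use the Seifert volume, note that its vanishing on $\Sigma_g\times S^1$ is not formal --- it rests on the fact that a representation of $\pi_1(\Sigma_g)\times\Z$ into $\mathrm{Isom}_0(\widetilde{SL_2})$ forces the surface factor to project to a $\mathrm{PSL}(2,\R)$-representation of zero Euler number (the lifting obstruction through the central $\R$-extension), so the volume integral vanishes; without that observation the invariant is not obviously finite, let alone zero, on products. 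Two smaller imprecisions: the pinch map of \eqref{pinch} only produces the degree-one map to $S^3$, i.e.\ the case $p=0$; for $p\geq1$ one needs the standard construction realising a surjection $\pi_1(M)\twoheadrightarrow F_p$ by a degree-one map onto $\#_p(S^2\times S^1)$, which is where the corank/amenability dichotomy you mention actually enters and is the content of the relevant part of \cite{KN}. And your centre-killing factorisation argument is correct but should be phrased after passing to a finite cover where the Seifert fibration is an honest circle bundle over a surface, so that the quotient by the centre is an honest surface group and the factorisation through a $2$-complex is literal; you flag this yourself as an obstacle, and it is routine. What your route buys is a uniform, checkable list of obstructions covering every missing arrow of Figure \ref{f:order3-mfds}; what it does not replace is the constructive core (the arrows out of $\mathrm{(GRAPH)}$ and the mutual domination $\mathbb{H}^3\leftrightarrow\mathrm{(NGRAPH)}$), which both you and the paper must take from \cite{Wangorder}.
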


The proof of Theorem \ref{t:order3-mfds} for maps between (most) aspherical $3$-manifolds is given in~\cite{Wangorder} and for maps from $\mathbb{H}^2 \times \R$ manifolds to manifolds modeled on the geometries $\widetilde{SL_2}$ or $Nil^3$, or when the target manifold is finitely covered by $\#_p(S^2\times S^1)$, is given in~\cite{KN}. Note also some restrictions on the diagram concerning the number of summands in $\#_p(S^2\times S^1)$ for domination from  $Sol^3$, $Nil^3$ or $\R^3$ manifolds; see~\cite[pg. 4]{Neoorder}.

\subsection{Ordering the $4$-dimensional geometries}

Our goal in this section is to order in the sense of Wang all non-hyperbolic $4$-manifolds that carry a Thurston aspherical geometry:

\begin{thm}\label{t:order4}
 Consider all $4$-manifolds that are modeled on a non-hyperbolic aspherical geometry\index{manifold!aspherical}\index{manifold!geometric}\index{Thurston geometry}. If there is an oriented path from a geometry $\mathbb{X}^4$
to another geometry $\mathbb{Y}^4$ in Figure \ref{d:nonhypmaps}, then any $\mathbb{Y}^4$-manifold is dominated by an $\mathbb{X}^4$-manifold.
If there is no oriented path from $\mathbb{X}^4$ to $\mathbb{Y}^4$, then no $\mathbb{X}^4$-manifold dominates a $\mathbb{Y}^4$-manifold.\index{Gromov!order}
\end{thm}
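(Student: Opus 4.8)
The plan is to separate the statement into a \emph{realisation} half (every oriented path in Figure~\ref{d:nonhypmaps} yields a dominant map) and an \emph{obstruction} half (no domination occurs in the absence of a path), treating the geometries according to how much of their structure survives passage to a finite cover. For the realisation half I would use the virtual product and bundle descriptions recorded above. By Theorem~\ref{t:hillmanproducts} a manifold carrying a geometry $\mathbb{X}^3\times\R$ is finitely covered by $N^3\times S^1$ with $N^3$ an $\mathbb{X}^3$-manifold, so Wang's three-dimensional dominations of Theorem~\ref{t:order3-mfds} lift to dimension four by crossing a non-zero degree map $N_1\to N_2$ with $\id_{S^1}$, using $\deg(f\times\id)=\deg(f)$. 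Domination onto the flat geometry $\R^4=T^4$ and between the mixed product geometries is produced by taking products of the pinch maps~\eqref{pinch}; for instance $\Sigma_g\times\Sigma_h\to T^2\times T^2$ has degree one. The solvable non-product geometries are mapping tori by Theorem~\ref{t:mappingtorisolvable}, and the arrows between them are realised by maps compatible with the monodromy on the fibre and of non-zero degree on the base circle. Assembling these constructions should realise every arrow of Figure~\ref{d:nonhypmaps}.

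For the obstruction half I would stratify by the strength of the invariant required, starting with the two cleanest arguments. First, among all the geometries in question only $\mathbb{H}^2\times\mathbb{H}^2$ has positive simplicial volume: its finite covers are products of two hyperbolic surfaces, whereas every other geometry is virtually a product with a flat or Seifert factor and hence has vanishing simplicial volume by~\eqref{eq:simplicialproducts}. Functoriality (Lemma~\ref{l:monotonesimplicial}) then forbids any zero-volume geometry from dominating $\mathbb{H}^2\times\mathbb{H}^2$. Second, by Proposition~\ref{p:sol&sol} the fundamental groups of $Sol^4_{m\neq n}$ and $Sol^4_0$ manifolds are not presentable by products, so the theorem of~\cite{KL} rules out domination of such a manifold by any direct product; since the product geometries are virtually direct products, this excludes every forbidden arrow into $Sol^4_{m\neq n}$ and $Sol^4_0$.

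The remaining obstructions split into two families. For the mixed product geometries $\mathbb{H}^3\times\R$, $\mathbb{H}^2\times\R^2$ and $\widetilde{SL_2}\times\R$, whose fundamental groups are virtually of the form $\Gamma\times\Z$ with $\Gamma$ a three-dimensional geometric group, I would reduce the forbidden dominations to the three-dimensional non-dominations of Theorem~\ref{t:order3-mfds}. The reduction exploits the central direction: by Lemma~\ref{l:degreegroups} a dominant map gives a finite-index inclusion on $\pi_1$, and controlling the image of the central $\Z$ (which remains central in finite covers) lets me split off the circle factor and extract a non-zero degree map of the three-dimensional factors, contradicting Wang's diagram whenever no three-dimensional path exists. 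For the polycyclic solvmanifold geometries $Nil^4$, $Sol^4_1$, $Sol^3\times\R$, $Nil^3\times\R$ and $\R^4$ --- all with vanishing simplicial volume and virtually polycyclic $\pi_1$ of Hirsch length four --- I would argue purely group-theoretically, separating them by the first Betti number (Lemma~\ref{l:Betti}), by nilpotency versus pure solvability, by the rank of the centre (pinned down by Propositions~\ref{p:nil4} and~\ref{p:sol1} for $Nil^4$ and $Sol^4_1$), and by the Hirsch-type requirement that a genuine $Sol$-monodromy carries no root-of-unity eigenvalue (in the spirit of Theorem~\ref{t:Hirsch4}). Theorem~\ref{t:Wall4Dgeometries} closes the cases in which mutual degree $\pm1$ dominations would otherwise arise, since these force a homotopy equivalence and hence the same geometry.

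I expect the last family to be the main obstacle. The simplicial volume and presentable-by-products arguments are uniform, but excluding each forbidden arrow among the five polycyclic geometries requires a careful, essentially case-by-case analysis of which virtually polycyclic groups admit a finite-index-image homomorphism onto which, subject to matching Hirsch length, Betti numbers, nilpotency class and centre; likewise the faithful reduction to three dimensions for the mixed products is delicate, since a four-dimensional dominant map does not automatically descend to the three-dimensional factors. The decisive point is to attach to each forbidden arrow a single clean algebraic obstruction rather than an ad hoc computation, so that the negative direction reads off the absence of an oriented path in Figure~\ref{d:nonhypmaps} uniformly.
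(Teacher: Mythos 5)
Your skeleton (realise arrows by crossing Wang's $3$-dimensional maps with $\id_{S^1}$ and by pinches; obstruct via simplicial volume, presentability by products, Betti numbers and centres) matches the paper's in outline, but three of the obstruction cases are genuinely not covered by the invariants you list. First, the irreducible $\mathbb{H}^2\times\mathbb{H}^2$ geometry is essentially absent from your argument: your claim that every geometry other than $\mathbb{H}^2\times\mathbb{H}^2$ is ``virtually a product with a flat or Seifert factor'' is false for the irreducible case (those manifolds are not virtual surface bundles and have \emph{positive} simplicial volume), and no invariant you name separates irreducible from reducible $\mathbb{H}^2\times\mathbb{H}^2$ manifolds or shows that an irreducible one cannot dominate, say, an $\mathbb{H}^3\times\R$ manifold. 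The paper handles this node with Margulis's normal subgroup theorem: a $\pi_1$-surjective non-zero degree map from an irreducible lattice quotient forces $\pi_1(f)$ to be injective, hence a homotopy equivalence between aspherical manifolds, contradicting Theorem \ref{t:Wall4Dgeometries}. Second, the pairwise non-comparability of $Sol_0^4$, $Sol_{m\neq n}^4$, $Sol_1^4$ and $Nil^4$ does not follow from your list: $Sol_0^4$ and $Sol_{m\neq n}^4$ manifolds have the same first Betti number ($=1$), the same solvability class, trivial centre mod the fibre data, vanishing simplicial volume, and both are non-products, so ``not presentable by products'' says nothing about maps between them. The paper needs Proposition \ref{p:injectivemappingtoriTn} (non-zero degree maps between mapping tori of $T^3$ with no root-of-unity eigenvalues are $\pi_1$-injective) combined again with Theorem \ref{t:Wall4Dgeometries}, and Proposition \ref{p:comparesolnil} for the $Nil^4/Sol_1^4$ versus $Sol_0^4/Sol_{m\neq n}^4$ comparisons.

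Third, your proposed reduction of the $\mathbb{X}^3\times\R$ cases to Wang's diagram by ``splitting off the circle factor'' is exactly the step that does not work as described: a degree non-zero map $M\times S^1\to N\times S^1$ need not respect any product structure, and controlling the image of the centre does not by itself produce a non-zero degree map $M\to N$. The paper's actual mechanism is Proposition \ref{p:liftingtoproducts}, which rests on Thom's realisation theorem via Theorem \ref{t:mapsbetweenproducts} and Proposition \ref{c:productslower}, and crucially on the hypothesis that the $3$-manifold factor of the target is \emph{not dominated by products}; this is why the argument applies precisely when $\mathbb{Y}^3\neq\mathbb{H}^2\times\R,\R^3$. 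Relatedly, the exclusion $\widetilde{SL_2}\times\R\ngeq\R^4$ escapes all of your invariants (Betti numbers, centre rank, solvability and simplicial volume all fail to obstruct a map to $T^4$, and $T^3$ \emph{is} dominated by products so Proposition \ref{p:liftingtoproducts} does not apply either); the paper needs the separate observation that the $S^1$-fibre class of the $\widetilde{SL_2}$ factor is torsion in $H_1$, forcing any map to $T^4$ to factor through the $3$-dimensional base and hence to have degree zero. Until you supply these four arguments, the negative half of the theorem is not proved.
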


\begin{figure}[ht!]
    \[
\xymatrix{
 &(\mathbb{H}^2 \times \mathbb{H}^2)_{irreducible}  & (\mathbb{H}^2 \times \mathbb{H}^2)_{reducible} \ar[rrd]& & & \\
& Sol_0^4 &   &                                       &  \mathbb{H}^2 \times \R^2 \ar[r] & \R^4\\
&Sol_{m\neq n}^4 & 
 \mathbb{H}^3 \times \R \ar[rr] \ar[rru] \ar[rrd] & & Sol^3
\times \R & &\\
&Sol_1^4 & & & \widetilde{SL_2} \times \R \ar[r] & Nil^3 \times \R\\
 & Nil^4  &  & & &}
\] 
\caption{\small{Ordering Thurston geometries in dimension four.}}
\label{d:nonhypmaps}
\end{figure}
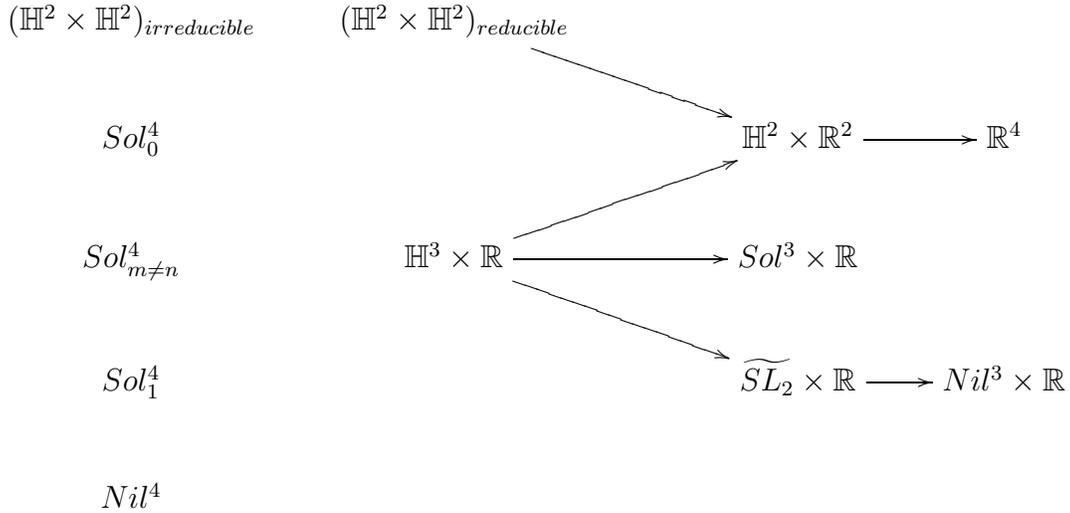 

Theorem \ref{t:order4} does not include the real or complex hyperbolic geometries, partially because some of the results about those geometries are well-known and because the domination relation for those geometries has been studied by other authors; see~\cite{CT,KL,Gaifullin}. Similarly, the non-aspherical geometries are not included in the above theorem; those geometries are either products or their representatives are simply connected, see~\cite{Neobranch, Neothesis} for further details.

We will devote the rest of this section in sketching a proof of Theorem \ref{t:order4}, and refer to~\cite{Neoorder} for the details.

\subsubsection{Manifolds covered by products}\label{s:products}

First, we will examine $4$-manifolds that are finitely covered by direct products. In other words, we will explain the right-hand side of Figure \ref{d:nonhypmaps}.

\subsubsection*{Non-existence stability between products}\index{domination}

Dealing with manifolds in dimension four, a natural question is whether one can extend Wang's ordering given by Theorem \ref{t:order3-mfds} to $4$-manifolds that are finitely covered by $N\times S^1$, where $N$ is a $3$-manifold as in Theorem \ref{t:order3-mfds}. The problem is whether the non-existence results by Wang extend in dimension four, namely, whether $M\ngeq N$ implies $M\times S^1 \ngeq N\times S^1$. This raises the following more general stability question:

\begin{prob}\label{pr:productstability}
 Suppose $M\ngeq N$. Does this imply $M\times W \ngeq N\times W$ for every manifold $W$?
 \end{prob}
 
  This problem is of independent interest, because, for example, our current knowledge on the multiplicativity of functorial numerical invariants (such as the simplicial volume) under taking products is not  enough to answer this kind of problems, even when an invariant remains non-zero under taking products; compare to \eqref{eq:simplicialproducts}.

The next result is based on the celebrated realisation theorem of Thom \cite{Thom} and gives a sufficient condition for non-domination stability for products:

\begin{thm}\cite{KotschickLoehNeofytidis,Neothesis}\label{t:mapsbetweenproducts}
 Let $M,N$ be $n$-manifolds such that $N$ is not dominated by products and $W$ be an $m$-manifold. Then, $M \geq N$ if and only if $M \times W \geq N \times W$.
\end{thm}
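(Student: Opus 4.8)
\emph{The easy direction.} If $f\colon M\to N$ has $\deg(f)=d\neq 0$, then $f\times\id_W\colon M\times W\to N\times W$ sends $[M]\times[W]$ to $f_*[M]\times[W]=d\,[N]\times[W]=d\,[N\times W]$, so $\deg(f\times\id_W)=d\neq 0$ and $M\times W\geq N\times W$. No hypothesis on $N$ is needed here, so the whole content of the theorem lies in the converse, which is where the assumption on $N$ and Thom's theorem enter.

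\emph{The converse: setup.} Assume $F\colon M\times W\to N\times W$ has $\deg(F)=d\neq 0$. Write $\pi_N,\pi_W$ for the two projections of $N\times W$ and set $G:=\pi_N\circ F\colon M\times W\to N$. Let $\omega_N\in H^n(N;\Z)$, $\omega_W\in H^m(W;\Z)$ and $\omega_M\in H^n(M;\Z)$ be the fundamental cohomology classes. Since $\omega_N\times\omega_W$ is the fundamental cohomology class of $N\times W$ and $G^*=F^*\pi_N^*$, the degree condition becomes
\[
G^*\omega_N\,\cup\,F^*\pi_W^*\omega_W \;=\; F^*(\omega_N\times\omega_W)\;=\;d\,(\omega_M\times\omega_W)
\]
in $H^{n+m}(M\times W;\Q)$, and evaluating on $[M]\times[W]$ yields $\langle G^*\omega_N\cup F^*\pi_W^*\omega_W,\,[M]\times[W]\rangle=d$.

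\emph{The converse: the K\"unneth--Thom mechanism.} I expand $a:=G^*\omega_N$ and $b:=F^*\pi_W^*\omega_W$ in the K\"unneth decomposition $H^*(M\times W;\Q)\cong H^*(M;\Q)\otimes H^*(W;\Q)$ and match bidegrees. Collecting total bidegree $(n,m)$ in the identity above writes $d$ as a sum over $j$ of pairings of the component $a_{n-j,j}\in H^{n-j}(M)\otimes H^{j}(W)$ of $a$ against the complementary component of $b$. As $d\neq 0$, some summand is nonzero, so for some $j$ the component $a_{n-j,j}$ pairs nontrivially with a \emph{decomposable} class $\beta\times\gamma$, $\beta\in H_{n-j}(M;\Q)$, $\gamma\in H_{j}(W;\Q)$; nondegeneracy of the Poincar\'e--K\"unneth pairing lets one take $\beta,\gamma$ decomposable (choose $\beta$ dual to a cohomology class occurring in $a_{n-j,j}$, then $\gamma$ accordingly). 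Consequently $G_*(\beta\times\gamma)$ is a nonzero multiple of $[N]$. Clearing denominators so that $\beta,\gamma$ are integral and applying Thom's realisation theorem, I realise nonzero multiples of $\beta$ and $\gamma$ by maps $\alpha\colon A\to M$ and $\delta\colon B\to W$ from closed oriented manifolds $A^{n-j}$, $B^{j}$; then
\[
A\times B\xrightarrow{\ \alpha\times\delta\ }M\times W\xrightarrow{\ G\ }N
\]
pushes $[A]\times[B]$ to a nonzero multiple of $G_*(\beta\times\gamma)$ and hence has nonzero degree.

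\emph{Conclusion and the main obstacle.} If the surviving bidegree is the extreme one $j=0$, then $\beta\times\gamma$ is a multiple of $[M]\times[\,\mathrm{pt}\,]$ and the composite above is, up to the collapsed $W$–factor, the slice $M\cong M\times\{w_0\}\xrightarrow{G}N$; its nonzero degree gives $M\geq N$ at once. For an intermediate bidegree $1\leq j\leq n-1$ both $A$ and $B$ are positive–dimensional, so $A\times B\to N$ would exhibit $N$ as dominated by a nontrivial product, contradicting the hypothesis that $N$ is not dominated by products. The remaining possibility is the opposite extreme $j=n$ (which can occur only when $\dim W\geq\dim N$), where $A$ degenerates to a point and the product obstruction does not directly apply; this is precisely \emph{the main obstacle}. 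Here one must return to the complementary factor $b=F^*\pi_W^*\omega_W$ and observe that a nonzero contribution at $j=n$ forces the complementary $M$–slice to dominate $W$ in degree $m-n$, a relation that is incompatible with the contribution surviving unless it already collapses into the $j=0$ case. Discharging this bookkeeping of the degenerate bidegrees—using the hypothesis on $N$ together with Thom's theorem applied only up to a nonzero multiple, i.e. rationally—is the delicate heart of the argument and the point where I would invest the most care.
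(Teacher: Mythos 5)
Your overall strategy --- project to $N$, expand $G^*\omega_N$ in the K\"unneth decomposition, and use Thom's realisation theorem together with the hypothesis on $N$ to kill every component of bidegree $(n-j,j)$ with $1\le j\le n-1$ --- is the right mechanism, and it is the one the survey attributes to the cited source (the survey itself gives no proof, only the remark that the result rests on Thom's theorem). The easy direction, the treatment of the intermediate bidegrees, and the $j=0$ case are all correct. But what you label ``the main obstacle'' is not residual bookkeeping: it is the actual content of the converse, and your proposal does not close it. Concretely, suppose the $(n,0)$-component of $G^*\omega_N$ vanishes (so every slice $M\times\{w_0\}\to N$ has degree $0$) while the intermediate components are killed by the hypothesis. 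Then $G^*\omega_N=1\times u$ for some $0\neq u\in H^n(W;\Q)$, and the degree identity reduces to $\langle u\cup w,[W]\rangle=\pm d$, where $\omega_M\times w$ is the $(n,m-n)$-component of $(\pi_W\circ F)^*\omega_W$. Running Thom on a class of $H_n(W;\Q)$ pairing nontrivially with $u$ produces an $n$-manifold $B$ and a composite $B\to\{pt\}\times W\to N$ of non-zero degree, i.e.\ $B\geq N$ --- which contradicts nothing (a single manifold dominating $N$ is not a product) and says nothing about $M$. Your proposed repair is not an argument: the relation it extracts from the complementary factor is $M\times V\geq W$ for some $(m-n)$-manifold $V$, and since there is no hypothesis whatsoever on $W$ (it may perfectly well be dominated by products), no contradiction can come from that side.

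For the record, your own tools do discharge the obstacle when $m\le n$, and you should at least have written this out. If $m<n$ then $H^n(W;\Q)=0$, so $u=0$ and the degree would vanish; the case cannot occur. If $m=n$ then $u=u_0\omega_W$ and $w=w_0\cdot 1$ with $u_0w_0=\pm d\neq 0$, so the slice $\{x\}\times W\to N$ has degree $u_0\neq0$ and the slice $M\times\{w_0\}\to W$ (through $\pi_W\circ F$) has degree $w_0\neq0$, giving $M\geq W\geq N$ and hence $M\geq N$ by transitivity. For $m>n$, however, a genuinely new input is needed, and that is precisely where the cited proof earns the theorem; as written, your argument establishes the statement only under the additional hypothesis $\dim W\le\dim N$.
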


 In a similar vein, we have the following:

\begin{prop}\cite{KotschickLoehNeofytidis,Neothesis}\label{c:productslower}
 Let $M,W$ and $N$ be manifolds of dimensions $m,k$ and $n$ respectively such that $m,k<n<m+k$. If $N$ is not dominated by
products, then $M\times W\ngeq N\times V$, for any manifold $V$ of dimension $m+k-n$.
\end{prop}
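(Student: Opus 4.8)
The plan is to argue by contradiction: assuming a non-zero degree map $f\colon M\times W\to N\times V$ exists, I will manufacture a non-trivial direct product, with \emph{both} factors of positive dimension, that dominates $N$, contradicting the hypothesis that $N$ is not dominated by products. Set $\ell:=m+k-n=\dim V$; the hypotheses $m,k<n<m+k$ say precisely that $1\le\ell<\min(m,k)$, and this numerical fact is what will make the whole argument run. The idea is to \emph{slice} the product along the $V$-direction: letting $\mathrm{vol}_V\in H^{\ell}(V;\Q)$ be the generator dual to $[V]$, I would consider the Poincar\'e dual class $\zeta:=PD_{M\times W}\big(f^{*}\pi_V^{*}\mathrm{vol}_V\big)\in H_n(M\times W;\Q)$, which is (up to sign) represented by the preimage of a regular value of $\pi_V\circ f$ after smoothing $f$.

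First I would compute the image of $\zeta$ under $\pi_N\circ f$. Using naturality of the cap product and the projection formula $f_*(f^{*}a\cap x)=a\cap f_*x$, together with $f_*[M\times W]=d\,([N]\times[V])$ and $\mathrm{vol}_V\cap[V]=[\mathrm{pt}]$, one obtains $(\pi_N\circ f)_*(\zeta)=\pm d\,[N]\neq 0$. Next I would expand $\zeta$ through the K\"unneth isomorphism $H_n(M\times W;\Q)=\bigoplus_{p+q=n}H_p(M;\Q)\otimes H_q(W;\Q)$. Since $f^{*}\pi_V^{*}\mathrm{vol}_V$ lies in $H^{\ell}(M\times W;\Q)=\bigoplus_{i+j=\ell}H^i(M;\Q)\otimes H^j(W;\Q)$, its Poincar\'e dual has K\"unneth components only in bidegrees $(p,q)=(m-i,k-j)$ with $0\le i,j$ and $i+j=\ell$. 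Here $1\le\ell<\min(m,k)$ forces $p=m-i\ge m-\ell=n-k\ge 1$ and $q=k-j\ge k-\ell=n-m\ge 1$; that is, \emph{every} K\"unneth summand of $\zeta$ is a sum of product classes $\xi\times\eta$ with $\xi\in H_p(M;\Q)$, $\eta\in H_q(W;\Q)$ and both $p,q\ge 1$.

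Since $(\pi_N\circ f)_*\colon H_n(M\times W;\Q)\to H_n(N;\Q)\cong\Q\cdot[N]$ is $\Q$-linear and sends $\zeta$ to $\pm d[N]\neq 0$, at least one product class $\xi\times\eta$ occurring in $\zeta$ satisfies $(\pi_N\circ f)_*(\xi\times\eta)=c\,[N]$ with $c\neq 0$; crucially no cancellation can defeat this, as $H_n(N;\Q)$ is one-dimensional. Now I would invoke Thom's realisation theorem \cite{Thom}, exactly in the spirit of Theorem \ref{t:mapsbetweenproducts}: rationally, $\xi$ and $\eta$ are represented by maps $\alpha\colon A\to M$ and $\beta\colon B\to W$ from closed oriented manifolds of dimensions $p,q\ge 1$, with $\alpha_*[A]$ and $\beta_*[B]$ non-zero multiples of $\xi$ and $\eta$. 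The composite $\pi_N\circ f\circ(\alpha\times\beta)\colon A\times B\to N$ then sends $[A\times B]=[A]\times[B]$ to a non-zero multiple of $[N]$, hence has non-zero degree. Thus the non-trivial product $A\times B$ dominates $N$, the desired contradiction.

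The main obstacle is the positivity-of-both-factors step: everything hinges on $m,k<n<m+k$ guaranteeing $1\le\ell<\min(m,k)$, since this is exactly what prevents any K\"unneth component of the slice $\zeta$ from degenerating into a class supported on a single factor, which Thom realisation would only turn into a \emph{point-times-manifold} and yield no genuine product domination. A secondary technical point is that Thom realisation is merely rational and only up to non-zero scalars; this is harmless, because non-zero degree is a scale-invariant rational condition and because the one-dimensionality of $H_n(N;\Q)$ ensures that a single surviving summand already produces a dominating product.
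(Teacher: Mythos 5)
Your argument is correct and is essentially the proof from the cited sources \cite{KotschickLoehNeofytidis,Neothesis} (the survey itself only states the proposition with a citation): one slices $M\times W$ along the Poincar\'e dual of $f^{*}\pi_V^{*}\mathrm{vol}_V$, uses the projection formula to see that this degree-$n$ class maps onto a non-zero multiple of $[N]$, observes that the constraint $m,k<n<m+k$ forces every K\"unneth component to have both factors in positive degree, and then applies Thom realisation to produce a non-trivial product dominating $N$. The numerical bookkeeping and the handling of the rational/up-to-multiples nature of Thom's theorem are both right.
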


\subsubsection*{Targets that are virtual products with a circle factor.}

Now we apply Theorem \ref{t:mapsbetweenproducts} to $4$-manifolds that are finitely covered by $N \times S^1$, thus extending  Theorem \ref{t:order3-mfds}. 
In the following theorem, we shall say that a $4$-manifold belongs to the class $X
\times \R$ if it is finitely covered by a product $N \times S^1$, where $N$ is a $3$-manifold that belongs to the class $X$ as defined in Theorem \ref{t:order3-mfds}.

\begin{thm}\label{t:4Dwangorder}
Let $X$ be one of the three classes (i)--(iii) given in Theorem \ref{t:order3-mfds}. If there exists an oriented path from a class $X$ to another class $Y$ in Figure \ref{f:order3-mfds}, then any $4$-manifold in $Y \times
\R$ is dominated by a manifold in $X \times \R$. Otherwise, no manifold in $Y \times \R$ can be dominated by a manifold in $X \times \R$.
\end{thm}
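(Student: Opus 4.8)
The plan is to prove both implications by transporting everything back to Wang's three-dimensional ordering (Theorem~\ref{t:order3-mfds}) via the two operations of multiplying by $S^1$ and passing to finite covers, with Theorem~\ref{t:mapsbetweenproducts} as the bridge between dimensions three and four. For the positive direction, suppose there is an oriented path from $X$ to $Y$ in Figure~\ref{f:order3-mfds}, and let $P$ be any $4$-manifold in $Y\times\R$, so that $P$ is finitely covered by a product $N\times S^1$ with $N$ a $3$-manifold in class $Y$; the covering projection has non-zero degree, whence $N\times S^1\geq P$. By Theorem~\ref{t:order3-mfds} there is a $3$-manifold $M$ in class $X$ and a map $f\colon M\to N$ of non-zero degree, so $f\times\id_{S^1}\colon M\times S^1\to N\times S^1$ has degree $\deg(f)\neq 0$. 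Since $M\times S^1$ visibly lies in $X\times\R$, transitivity of domination gives $M\times S^1\geq N\times S^1\geq P$, which is what we want.

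For the negative direction, assume there is \emph{no} oriented path from $X$ to $Y$ and suppose, for contradiction, that some $Q\in X\times\R$ dominates some $P\in Y\times\R$. Fixing finite covers $M\times S^1\to Q$ and $N\times S^1\to P$ with $M\in X$ and $N\in Y$, we get $M\times S^1\geq Q\geq P$, hence $M\times S^1\geq P$. Pulling back the covering $N\times S^1\to P$ along the dominating map $M\times S^1\to P$ yields a finite cover $E\to M\times S^1$ together with a map $E\to N\times S^1$ of non-zero degree. Using Theorem~\ref{t:hillmanproducts} in the geometric case (and, in general, the observation that the monodromy of the mapping-torus structure on $E$ has finite order in the outer automorphism group of $\pi_1(M)$, so that a further finite cover of $E$ is homotopy equivalent to a genuine product), one produces a cover of the form $M'\times S^1$ with $M'$ again in class $X$. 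Composing covers with the dominating map gives $M'\times S^1\geq N\times S^1$. This reduction to a domination between products is unconditional and is the common first step.

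The argument now splits according to the target factor $N$. If $N$ is \emph{not} dominated by products, then Theorem~\ref{t:mapsbetweenproducts} applies verbatim and turns $M'\times S^1\geq N\times S^1$ into $M'\geq N$, a domination of a class-$Y$ manifold by a class-$X$ manifold that contradicts Theorem~\ref{t:order3-mfds}. By~\cite{KL} this disposes of exactly those $Y$ whose fundamental group is not presentable by products (Definition~\ref{d:PP}), namely $\mathbb{H}^3$, $Sol^3$, $\widetilde{SL_2}$, and the graph and non-graph classes.

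The main obstacle is the complementary case, where $N$ \emph{is} dominated by products, i.e.\ $Y$ is one of $\mathbb{H}^2\times\R$, $\R^3$ or $Nil^3$, or a rationally inessential class, so that $N\times S^1$ is virtually $\Sigma_g\times T^2$, $T^4$, or a connected-sum-times-circle. Here Theorem~\ref{t:mapsbetweenproducts} fails and the extra $S^1$ cannot be cancelled, so I would argue directly on $M'\times S^1\geq N\times S^1$ with finer invariants. Lemma~\ref{l:Betti} already settles many pairs numerically: for instance $b_1(Sol^3\times S^1)=2$ and $b_1(Nil^3\times S^1)=3$ are too small to map onto the targets of interest, and $b_1(T^4)=4<2g+2=b_1(\Sigma_g\times T^2)$. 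For the pairs that survive the Betti test the cohomology ring is decisive: when the degree-one classes of the source all pull back from a surface base (as for $\widetilde{SL_2}\times S^1$) or span a ring with vanishing products (as for a connected sum of copies of $S^2\times S^1$ crossed with $S^1$), every top $4$-fold cup product vanishes, whereas the fundamental class of $\Sigma_g\times T^2$ or $T^4$ is realized as such a product; hence no map of non-zero degree can exist. Arranging these Betti and cup-product computations so that they match precisely the missing arrows of Figure~\ref{f:order3-mfds} is the delicate bookkeeping, carried out in detail in~\cite{Neoorder}.
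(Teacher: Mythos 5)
Your overall architecture matches the paper's: cross Wang's $3$-dimensional dominant maps with $S^1$ for the existence part, and for non-existence reduce to a domination $M'\times S^1\geq N\times S^1$ between genuine products and try to cancel the $S^1$ factor via Theorem~\ref{t:mapsbetweenproducts}. The gap is in where you draw the dichotomy. Theorem~\ref{t:mapsbetweenproducts} applies exactly when the $3$-manifold $N$ is \emph{not dominated by products}, and the relevant classification is \cite[Theorem 4]{KN}: among aspherical $3$-manifolds, \emph{only} the $\mathbb{H}^2\times\R$ and $\R^3$ manifolds are dominated by products. You instead use ``$\pi_1(N)$ not presentable by products'' via \cite{KL}. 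That criterion is only sufficient, and it misfires here twice. First, the fundamental group of an $\widetilde{SL_2}$ manifold has infinite cyclic center, hence \emph{is} presentable by products (take $G_1=C(G)$, $G_2=G$); the non-domination of such manifolds by products is the genuinely harder content of \cite{KN}, not a consequence of \cite{KL}. Second, and more damagingly, you place $Nil^3$ in the ``dominated by products'' bucket. This is false: $Nil^3$ manifolds are not dominated by products, again by \cite[Theorem 4]{KN}.

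The misplacement of $Nil^3$ is not merely a citation issue, because your fallback arguments do not cover the targets in $Nil^3\times\R$. Figure~\ref{f:order3-mfds} has no arrow into $Nil^3$ from $\mathbb{H}^2\times\R$ or $\R^3$, so you must rule out, e.g., $\Sigma_g\times T^2\geq N\times S^1$ and $T^4\geq N\times S^1$ for $N$ a $Nil^3$ manifold. Lemma~\ref{l:Betti} gives nothing ($b_1(\Sigma_g\times T^2)=2g+2$ and $b_1(T^4)=4$ both exceed $b_1(N\times S^1)=3$, and the second Betti numbers also go the wrong way), and your cup-product test is set up for the opposite situation: here the \emph{source} has non-trivial fourfold products of degree-one classes, while the fundamental class of $N\times S^1$ is \emph{not} a product of degree-one classes (the pullback of $\omega_{T^2}$ to the non-trivial circle bundle $N$ is torsion), so neither obstruction fires. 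These pairs really do require the fact that $Nil^3$ manifolds are not dominated by products, which is how the paper disposes of them via Proposition~\ref{p:liftingtoproducts}. Once you replace your \cite{KL} criterion by \cite[Theorem 4]{KN}, the only targets needing direct treatment are those in $\mathbb{H}^2\times\R^2$ and $\R^4$, and there your Betti-number and cup-product arguments do agree with the paper's (the paper phrases the $\widetilde{SL_2}\times\R\ngeq T^4$ step as a $\pi_1$-factorisation through $\Sigma\times S^1$, which is equivalent to your observation that $H^1$ of the source pulls back from a $3$-dimensional base).
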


\begin{proof}
The existence part of Theorem \ref{t:4Dwangorder} follows easily by the corresponding existence results for maps between $3$-manifolds given in Theorem \ref{t:order3-mfds}, hence we concentrate on  the non-existence part. Note that there is no $4$-manifold in the class $(\#_p S^2 \times S^1) \times \R$ that can dominate a manifold in the other classes, since the latter are all rationally essential. Thus, the interesting cases are when both domain and target are aspherical.

We first deal with targets whose $3$-manifold factor $N$ in their finite cover $N \times S^1$ is not dominated by products. The proof of the following uses Proposition \ref{c:productslower} and Theorem \ref{t:mapsbetweenproducts}:

\begin{prop}\cite[Prop. 4.4]{Neoorder}\label{p:liftingtoproducts}
 Suppose $W$ and $Z$ are $4$-manifolds such that 
\begin{itemize}
 \item[\normalfont{(1)}] $W$ is dominated by products;
 \item[\normalfont{(2)}] $Z$ is finitely covered by $N \times S^1$, where $N$ is a $3$-manifold not dominated by products.
\end{itemize}
 If $W \geq Z$, then there exists a $3$-manifold $M$ such that $M \times S^1 \geq W$ and $M \geq N$. In particular, $M$ cannot be
dominated by products.
\end{prop}

By~\cite[Theorem 4]{KN}, only $\mathbb{H}^2 \times \R$ and $\R^3$ manifolds are dominated by products among the aspherical $3$-manifolds. Hence, Proposition \ref{p:liftingtoproducts} and the non-existence part of Theorem \ref{t:order3-mfds} imply the following:

\begin{cor}
If $Y \neq \mathbb{H}^2 \times \R,\R^3$, then the non-existence part of Theorem \ref{t:4Dwangorder} holds true for every aspherical target in $Y \times \R$.
 \end{cor}

In the Thurston geometric setting, we have the following straightforward consequence of Proposition \ref{p:liftingtoproducts}:

\begin{cor}
 Let $W$ and $Z$ be aspherical $4$-manifolds carrying product geometries $\mathbb{X}^3 \times \R$ and $\mathbb{Y}^3 \times \R$ respectively, such that $\mathbb{Y}^3\neq\mathbb{H}^2 \times \R,\R^3$. If $W \geq Z$, then every $\mathbb{Y}^3$ manifold is dominated by some $\mathbb{X}^3$ manifold.
\end{cor}

In order to finish the proof of Theorem \ref{t:4Dwangorder}, we need to show that manifolds modeled on $\mathbb{H}^2 \times \R^2$ or $\R^4$
are not dominated by $\widetilde{SL_2} \times \R$, $Sol^3 \times \R$ or $Nil^3 \times \R$ manifolds. For the latter two geometries, this follows by the growth of their Betti numbers (cf. Lemma \ref{l:Betti}).
We are left to deal with the $\widetilde{SL_2} \times \R$ geometry: Note that each $\R^4$ manifold is finitely covered by the 4-torus $T^4$ and, therefore, it is virtually dominated\footnote{$M$ {\em virtually dominates} $N$ if some finite cover of $M$ dominates $N$.} by every $\mathbb{H}^2 \times \R^2$ manifold. Thus, it suffices to show that $T^4$ cannot be dominated by a product $M \times S^1$, where $M$ is an  $\widetilde{SL_2}$ manifold. After passing to a finite cover, we can assume that $M$ is a non-trivial $S^1$ bundle over a hyperbolic surface $\Sigma$; see Table~\ref{table:3geom}. 
Suppose $f \colon M \times S^1 \longrightarrow T^4$ is a continuous map. The product $M \times S^1$ carries the structure of a non-trivial $S^1$ bundle over $\Sigma \times S^1$, by multiplying by $S^1$ both the total space $M$ and the base surface $\Sigma$ of the $S^1$ bundle $M \longrightarrow \Sigma$. The $S^1$ fiber of the circle bundle 
\[
S^1\longrightarrow M \times S^1\longrightarrow \Sigma \times S^1
\]
has finite order in $H_1(M\times S^1)$, being also the fiber of $M$. Therefore, its image under $H_1(f)$ has finite order in $H_1(T^4)$. Now, since $H_1(T^4)$ is isomorphic to $\pi_1(T^4) \cong \Z^4$, we deduce that $\pi_1(f)$ maps the fiber of the $S^1$ bundle $M \times S^1 \longrightarrow \Sigma \times S^1$ to the trivial element in $\pi_1(T^4)$. The latter implies that $f$ factors through the base $\Sigma \times S^1$, because the total space $M \times S^1$, the base $\Sigma \times S^1$ and the target $T^4$ are all aspherical. This implies that the degree of $f$ must be zero, completing the proof of Theorem \ref{t:4Dwangorder}.
\end{proof}

\subsubsection*{Manifolds covered by the product of two hyperbolic surfaces.}

We close this subsection by examining manifolds that are finitely covered by a product of two hyperbolic surfaces, i.e. reducible $\mathbb{H}^2 \times \mathbb{H}^2$ manifolds.

Clearly, every $4$-manifold modeled on $\mathbb{H}^2 \times \R^2$ or $\R^4$ is dominated by a product of two hyperbolic
surfaces. 
However, Proposition \ref{c:productslower} (or Proposition \ref{p:liftingtoproducts}) tells us that aspherical $4$-manifolds that are finitely covered by products $N \times S^1$, where $N$ does not belong to one of the classes $\mathbb{H}^2 \times \R$ or $\R^3$, cannot be dominated by products of hyperbolic surfaces. 

Finally, we need to show that there is no manifold modeled on an aspherical geometry $\mathbb{X}^3 \times \R$ which can dominate a product of two hyperbolic surfaces. The fundamental group of a product $M \times S^1$ has center at least infinite cyclic (coming from the $S^1$ factor), while the center of the fundamental group of a product of two hyperbolic surfaces $\Sigma_g \times \Sigma_h$ is trivial. Therefore, every map $f \colon M \times S^1 \longrightarrow \Sigma_g \times \Sigma_h$ (which we can assume $\pi_1$-surjective after passing to finite covers) kills the homotopy class of the $S^1$ factor of $ M \times S^1$, and so it factors through an aspherical manifold of dimension at most three, because both $M \times S^1$ and $\Sigma_g \times \Sigma_h$ are aspherical. Thus 
\[
H_4(f)([M \times S^1]) = 0 \in H_4(\Sigma_g \times \Sigma_h),
\]
meaning that $\deg(f)=0$.

\begin{rem}\label{r:othertech}
Note that the non-domination $M \times S^1 \ngeq \Sigma_g \times \Sigma_h$  (where $g,h\geq2$) follows also quickly by the fact that $M \times S^1$ has vanishing simplicial volume, whereas the simplicial volume of $\Sigma_g \times \Sigma_h$ is positive (by the inequalities in~\eqref{eq:simplicialproducts} or more generally by~\cite{Buch}).
However, we have chosen to give more elementary and uniform arguments for the proof of Theorem \ref{t:order4}, revealing also the strength of algebraic considerations alone.
\end{rem}

\subsubsection{Finishing the proof of Theorem \ref{t:order4}}\label{s:proofend}

Thus far, we have given a proof for the right-hand side of the diagram in Figure \ref{d:nonhypmaps}, i.e., concerning maps between geometric aspherical $4$-manifolds that are finitely covered by products. For the remaining parts in Figure \ref{d:nonhypmaps}, we need to show that each of the geometries $Nil^4$, $Sol_0^4$, $Sol_{m \neq n}^4$, $Sol_1^4$ and the irreducible geometry $\mathbb{H}^2 \times \mathbb{H}^2$ is not comparable with any other
(non-hyperbolic) geometry under the domination relation.

\subsubsection*{Comparing non-product solvable geometries}

We begin by showing that there are no maps of non-zero degree between any two manifolds that are modeled on different geometries among $Nil^4$, $Sol_0^4$,
$Sol_{m \neq n}^4$ or $Sol_1^4$. First, we deal with $Nil^4$ and $Sol^4_1$:

\begin{prop}
There are no maps of non-zero degree between $Nil^4$ and $Sol_1^4$ manifolds.
\end{prop}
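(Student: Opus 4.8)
The plan is to turn this into a statement about fundamental groups, exploiting that $Nil^4$ is nilpotent whereas $Sol_1^4$ is solvable but not nilpotent. Both model spaces are simply connected solvable Lie groups, so the fundamental group of any manifold modeled on either is virtually polycyclic; since the manifolds are closed and aspherical, $\pi_1$ is moreover torsion-free, hence polycyclic, and as the $\pi_1$ of a closed aspherical $4$-manifold it is a Poincar\'e duality group of dimension $4$, so its Hirsch length equals $4$. The $Nil^4$ groups are virtually nilpotent (Proposition \ref{p:nil4}) and thus of polynomial growth, while the $Sol_1^4$ groups are not virtually nilpotent: by Proposition \ref{p:sol1} such a group is virtually an $S^1$-bundle group over a $Sol^3$ manifold, whose fundamental group $\Z^2\rtimes_A\Z$ (with $A$ hyperbolic) has exponential growth.

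Suppose, for contradiction, that $f\colon M \to N$ has non-zero degree, with $M$ a $Nil^4$ manifold and $N$ a $Sol_1^4$ manifold; the reverse case is identical, since what follows is symmetric in the two geometries. By Lemma \ref{l:degreegroups}, $f_*(\pi_1(M))$ has finite index in $\pi_1(N)$. Passing to the finite cover $\overline N \to N$ associated with this subgroup and lifting $f$ exactly as in the proof of Lemma \ref{l:degreegroups}, I obtain a map $\overline{f}\colon M \to \overline N$ of non-zero degree which is $\pi_1$-surjective, i.e. $\overline{f}_*\colon \pi_1(M)\twoheadrightarrow \pi_1(\overline N)$; note that $\overline N$, being a finite cover of a $Sol_1^4$ manifold, is again a $Sol_1^4$ manifold. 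Now $\pi_1(M)$ and $\pi_1(\overline N)$ are torsion-free polycyclic of Hirsch length $4$, so additivity of the Hirsch length applied to $1 \to \ker\overline{f}_* \to \pi_1(M)\to\pi_1(\overline N)\to 1$ forces $\ker\overline{f}_*$ to have Hirsch length $0$; being finite and contained in the torsion-free group $\pi_1(M)$, it is trivial. Hence $\overline{f}_*$ is an isomorphism $\pi_1(M)\cong\pi_1(\overline N)$, which is absurd because $\pi_1(M)$ is virtually nilpotent while $\pi_1(\overline N)$ is not.

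The entire content sits in the middle step: a $\pi_1$-surjective map of non-zero degree between closed aspherical $4$-manifolds with polycyclic fundamental groups must induce a $\pi_1$-isomorphism, because equality of Hirsch lengths kills the kernel. Once this is in hand the contradiction can be delivered either through growth type, as above, or more geometrically by observing that the isomorphism makes $M$ and $\overline N$ homotopy equivalent and then invoking Theorem \ref{t:Wall4Dgeometries}, which forbids homotopy equivalent geometric $4$-manifolds from carrying the distinct geometries $Nil^4$ and $Sol_1^4$. I expect the only point requiring care to be the verification that both fundamental groups are genuinely polycyclic of Hirsch length exactly $4$ (torsion-freeness from asphericity, cohomological dimension from Poincar\'e duality); the remainder is formal.
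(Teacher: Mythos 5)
Your proof is correct, but it follows a genuinely different route from the paper's. The paper exploits the fibered structure: by Propositions \ref{p:nil4} and \ref{p:sol1}, $Nil^4$ and $Sol_1^4$ manifolds are finitely covered by $S^1$-bundles over $Nil^3$ and $Sol^3$ manifolds respectively, with infinite cyclic centre persisting in finite covers; since $Nil^3$ and $Sol^3$ manifolds are incomparable by Wang's ordering (Theorem \ref{t:order3-mfds}), the factorization Lemma \ref{l:factorizationbundles} for such circle bundles yields the non-domination in dimension four. You instead argue purely algebraically: both fundamental groups are torsion-free virtually polycyclic of Hirsch length $4$ (equal to the cohomological dimension), so after passing to the finite cover realizing $\pi_1$-surjectivity, additivity of the Hirsch length kills the kernel and forces a $\pi_1$-isomorphism, hence a homotopy equivalence between aspherical manifolds carrying distinct geometries, contradicting Theorem \ref{t:Wall4Dgeometries} (or, as you note, polynomial versus exponential growth). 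Your argument is closer in spirit to the paper's treatment of $Sol_0^4$ versus $Sol_{m\neq n}^4$ (Proposition \ref{p:injectivemappingtoriTn} combined with Theorem \ref{t:Wall4Dgeometries}), but the Hirsch-length mechanism is more general: it requires no eigenvalue hypotheses on monodromies and disposes of all pairwise comparisons among the four solvable non-product geometries at once, subsuming Corollary \ref{c:nonmapsbetweensol} and Proposition \ref{p:comparesolnil} as well. The only imprecision --- a harmless one --- is the assertion that the torsion-free virtually polycyclic fundamental group is literally polycyclic; this is not needed, since the Hirsch length is defined for, and additive on short exact sequences of, virtually polycyclic groups, which is all your argument uses.
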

\begin{proof}
$Nil^4$ manifolds and $Sol_1^4$ manifolds are finitely covered by $S^1$ bundles over $Nil^3$ manifolds and $Sol^3$ manifolds respectively, and the center of their fundamental groups remains infinite cyclic in finite covers; see Propositions \ref{p:nil4} and \ref{p:sol1} respectively. By Theorem \ref{t:order3-mfds}, there are no maps of non-zero degree between $Sol^3$ manifolds and $Nil^3$ manifolds, thus the proposition follows by the next lemma.
\end{proof}

\begin{lem}\cite[Lemma 5.1]{Neoorder}\label{l:factorizationbundles}
 Let $M_i \stackrel{p_i}\longrightarrow B_i$ ($i=1,2$) be $S^1$ bundles over aspherical manifolds $B_i$ of the same dimension, so that the
center of each $\pi_1(M_i)$ remains infinite cyclic in finite covers. If $B_1 \ngeq B_2$, then $M_1 \ngeq M_2$. 
\end{lem}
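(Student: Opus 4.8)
The plan is to prove the contrapositive: assuming a map $f\colon M_1\to M_2$ of non-zero degree, I will produce a map $B_1\to B_2$ of non-zero degree. Write $\Gamma_i=\pi_1(M_i)$ and $\pi_i=\pi_1(B_i)$, and let $n=\dim B_i$. Since $B_i$ is aspherical and $M_i$ is an oriented $S^1$-bundle over it, $M_i$ is aspherical and the bundle yields a central extension $1\to\Z_i\to\Gamma_i\to\pi_i\to1$, where $\Z_i=\langle t_i\rangle$ is generated by the fibre class (central because the bundle is oriented). My first observation is that this fibre subgroup is exactly the center. Indeed $\Z_i\subseteq Z(\Gamma_i)$, and since $Z(\Gamma_i)$ is infinite cyclic by hypothesis, $\Z_i$ has finite index in it; the finite quotient $Z(\Gamma_i)/\Z_i$ then injects into $\pi_i=\pi_1(B_i)$, which is torsion-free as $B_i$ is an aspherical manifold, forcing $Z(\Gamma_i)=\Z_i$.

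First I would arrange $f$ to be $\pi_1$-surjective. By Lemma \ref{l:degreegroups}, $H:=\im(f_*)$ has finite index in $\Gamma_2$; passing to the finite cover $M_2'\to M_2$ with $\pi_1(M_2')=H$ and lifting $f$ gives a map of non-zero degree onto $M_2'$. The central structure descends: $H\cap\Z_2$ is an infinite cyclic central subgroup of $H$ and $H/(H\cap\Z_2)$ is a finite-index subgroup of $\pi_2$, hence the fundamental group of a finite cover $B_2'\to B_2$; thus $M_2'$ is the $S^1$-bundle over $B_2'$ realizing this extension. As $M_2'$ is a finite cover of $M_2$, the hypothesis gives that $Z(H)$ is again infinite cyclic, so the same torsion-free argument yields $Z(H)=H\cap\Z_2$. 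It therefore suffices to find a non-zero degree map $B_1\to B_2'$, since composing with the covering $B_2'\to B_2$ would give $B_1\geq B_2$, contradicting $B_1\ngeq B_2$.

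Next I would descend the map to the bases. Since $t_1$ is central and $f_*\colon\Gamma_1\to H$ is surjective, $f_*(t_1)$ is central in $H$, so $f_*(t_1)\in Z(H)=H\cap\Z_2\subseteq\Z_2$; hence $f_*(\Z_1)\subseteq\Z_2$ and $f_*$ induces a surjection $\bar f_*\colon\pi_1\to\pi_1(B_2')$. Because $B_1$ and $B_2'$ are aspherical, $\bar f_*$ is realized by a map $\bar f\colon B_1\to B_2'$, and asphericity also makes the square relating $f$, $\bar f$ and the two bundle projections commute up to homotopy. It remains only to check that $\deg(\bar f)\neq0$.

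The heart of the argument, and the step I expect to be the main obstacle, is the degree formula $\deg(f)=e\cdot\deg(\bar f)$, where $e\in\Z$ is the fibre degree, i.e.\ $f_*$ sends the fibre generator of $M_1$ to the $e$-th power of the fibre generator of $M_2'$. To obtain it I would pull back the bundle $M_2'\to B_2'$ along $\bar f$ and use the homotopy-commutative square to factor $f$, up to homotopy, as a fibre-preserving map $M_1\to\bar f^{*}M_2'$ of fibre degree $e$ over $\id_{B_1}$, followed by the canonical bundle map $\bar f^{*}M_2'\to M_2'$ covering $\bar f$ with fibre degree one. Naturality of integration along the fibre in the Gysin sequences of these oriented $S^1$-bundles, where in top degree $H^{n+1}(\text{total})\cong H^{n}(\text{base})$ because $H^{n+1}(B)=H^{n+2}(B)=0$, shows that the first factor has degree $e$ and the second degree $\deg(\bar f)$, giving the formula. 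Since $\deg(f)\neq0$, this forces $e\neq0$ and $\deg(\bar f)\neq0$, completing the proof. The delicate points to get right are the existence of the fibre-preserving factorization, which is what makes fibre-integration naturality applicable since $f$ itself need not respect the bundle structures, and the bookkeeping of fibre indices when passing to the finite cover $M_2'$.
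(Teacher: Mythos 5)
Your proof is correct and follows essentially the same route as the cited argument in \cite[Lemma 5.1]{Neoorder}: pass to a finite cover to make $f$ $\pi_1$-surjective, use the hypothesis on the center (together with torsion-freeness of the aspherical base groups) to show the fibre class maps to the fibre class, descend to a map of bases by asphericity, and conclude via the multiplicativity $\deg(f)=e\cdot\deg(\bar f)$ coming from fibre integration in the Gysin sequences. The points you flag as delicate (identifying the centre with the fibre subgroup, and the fibre-preserving factorization through the pullback bundle) are exactly the ones the original proof handles, and you handle them correctly.
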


Next, we show that there are no maps of non-zero degree between $Sol_0^4$ manifolds and $Sol_{m \neq n}^4$ manifolds. Recall by Theorem \ref{t:mappingtorisolvable}(1) that any manifold modeled on any of these geometries is a mapping torus of $T^3$, and, moreover, the eigenvalues of the automorphism
of $\Z^3$ induced by the monodromy of $T^3$ are not roots of unity; cf.~\cite[pg. 164--165]{Hillman}.  The following general result in all dimensions shows that every non-zero degree map between such mapping tori is $\pi_1$-injective: 

\begin{prop}\cite[Prop. 5.3]{Neoorder}\label{p:injectivemappingtoriTn}
 Suppose $M$ and $N$ are finitely covered by mapping tori of self-homeomorphisms of $T^n$ so that the eigenvalues of the induced
automorphisms of $\Z^n$ are not roots of unity. If $f \colon M \longrightarrow N$ is a non-zero degree map, then $f$ is $\pi_1$-injective.
\end{prop}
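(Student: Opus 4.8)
The plan is to reduce everything to the polycyclic structure of the two fundamental groups and run a Hirsch-length count, which turns out to require only that the relevant groups are torsion-free and polycyclic of the correct Hirsch length. First I would record the algebraic input. Since $M$ is finitely covered by a mapping torus of a self-homeomorphism of $T^n$, and such a mapping torus has fundamental group $\Z^n\rtimes\Z$, the group $\pi_1(M)$ contains a copy of $\Z^n\rtimes\Z$ as a finite-index subgroup; hence $\pi_1(M)$ is virtually polycyclic of Hirsch length $n+1$, and likewise for $\pi_1(N)$. Moreover, a mapping torus of $T^n$ fibers over $S^1$ with aspherical fiber $T^n$, so the homotopy long exact sequence kills $\pi_k$ for $k\geq 2$ and the mapping torus is aspherical; since asphericity passes from a finite cover to the base (they share a contractible universal cover), both $M$ and $N$ are closed aspherical $(n+1)$-manifolds. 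In particular $\pi_1(M)$ and $\pi_1(N)$ are torsion-free, because a nontrivial finite subgroup $F$ would act freely on the contractible universal cover and produce a finite-dimensional $K(F,1)$, which is impossible for a nontrivial finite group.

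With this in place, the argument is short. By Lemma \ref{l:degreegroups}, a map $f$ of non-zero degree satisfies $[\pi_1(N):f_*(\pi_1(M))]<\infty$, and since Hirsch length is a commensurability invariant this gives $h\bigl(f_*(\pi_1(M))\bigr)=h(\pi_1(N))=n+1$. Writing $K=\ker(f_*)$ and using additivity of Hirsch length across the extension $1\to K\to\pi_1(M)\to f_*(\pi_1(M))\to 1$, I would obtain $n+1=h(\pi_1(M))=h(K)+h\bigl(f_*(\pi_1(M))\bigr)=h(K)+(n+1)$, so $h(K)=0$. A polycyclic-by-finite group of Hirsch length zero is finite, while $K\leq\pi_1(M)$ is torsion-free; hence $K$ is trivial and $f_*$ is injective, as claimed.

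The step demanding the most care is the transfer of the two structural facts from the mapping-torus finite covers to $M$ and $N$ themselves: that $\pi_1(M)$ is torsion-free of Hirsch length $n+1$, where torsion-freeness rests on the asphericity inherited from the cover. This torsion-freeness is exactly what upgrades the conclusion ``$h(K)=0$, so $K$ is finite'' to ``$K$ is trivial''; without it the count would only yield injectivity modulo a finite kernel. Everything else is formal manipulation of exact sequences, and in particular the eigenvalue hypothesis is not needed for this injectivity statement; it enters, rather, when one subsequently uses the $\pi_1$-injective map to compare the two solvable geometries.
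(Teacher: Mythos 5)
Your proof is correct. The survey states this proposition only by citation to \cite{Neoorder} and does not reproduce the argument; the presence of the eigenvalue hypothesis in the statement indicates that the cited proof works inside the explicit $\Z^n\rtimes\Z$ structure of the two groups (using, e.g., that $1$ not being an eigenvalue forces $H^1\cong\Z$, so that the normal $\Z^n$ subgroups and the infinite cyclic quotients are essentially canonical and $f_*$ can be analysed on each piece separately). Your route replaces all of that by a single Hirsch-length count: $\pi_1(M)$ and $\pi_1(N)$ are torsion-free, virtually polycyclic of the same Hirsch length $n+1$, Lemma \ref{l:degreegroups} gives $[\pi_1(N):f_*(\pi_1(M))]<\infty$, and additivity of $h$ over $1\to\ker(f_*)\to\pi_1(M)\to f_*(\pi_1(M))\to1$ forces $h(\ker(f_*))=0$, hence $\ker(f_*)$ finite, hence trivial by torsion-freeness. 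Every ingredient you invoke is standard and correctly justified — the transfer of asphericity and of the group-theoretic structure from the finite cover to $M$ itself, commensurability invariance of $h$, the fact that a virtually polycyclic group of Hirsch length zero is finite, and torsion-freeness of fundamental groups of closed aspherical manifolds. What your approach buys is generality and brevity: it shows that any non-zero degree map between closed aspherical manifolds of equal dimension with virtually polycyclic fundamental groups is $\pi_1$-injective, and you are right that the eigenvalue hypothesis plays no role in the injectivity itself (it is used elsewhere, e.g.\ in the structure theory feeding into Corollary \ref{c:nonmapsbetweensol}). The only step I would ask you to flag with a reference in a write-up is the additivity $h(G)=h(K)+h(G/K)$ for a normal subgroup of a virtually polycyclic group, which is standard but not proved here.
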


Hence, by Theorem \ref{t:Wall4Dgeometries}, we deduce the following:

\begin{cor}\label{c:nonmapsbetweensol}
 Any two manifolds $M$ and $N$ modeled on $Sol_{m \neq n}^4$ and $Sol_0^4$ respectively are not comparable under the domination relation.
\end{cor}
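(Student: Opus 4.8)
The plan is to derive a contradiction from the existence of a non-zero degree map in \emph{either} direction, combining $\pi_1$-injectivity with the geometric rigidity recorded in Theorem \ref{t:Wall4Dgeometries}. Suppose, for instance, that $f\colon M\to N$ has non-zero degree, with $M$ modeled on $Sol_{m\neq n}^4$ and $N$ on $Sol_0^4$; the opposite direction is handled identically, since the hypotheses invoked below are symmetric in $M$ and $N$. By Theorem \ref{t:mappingtorisolvable}(1), both $M$ and $N$ are mapping tori of self-homeomorphisms of $T^3$, and, as noted in the preceding discussion of~\cite{Hillman}, the induced automorphisms of $\Z^3\cong\pi_1(T^3)$ have no roots of unity among their eigenvalues. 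Thus both manifolds satisfy the hypotheses of Proposition \ref{p:injectivemappingtoriTn}, and I would conclude that $f$ is $\pi_1$-injective.

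Next I would upgrade $\pi_1$-injectivity to an isomorphism onto a finite-index subgroup. Since $\deg(f)\neq 0$, Lemma \ref{l:degreegroups} gives $[\pi_1(N):f_*(\pi_1(M))]<\infty$; together with injectivity this identifies $\pi_1(M)$ with a finite-index subgroup $H\leq\pi_1(N)$. As $N$ is aspherical, $H$ is realised by a finite cover $\overline N\to N$, and $\overline N$ again carries the geometry $Sol_0^4$, because a finite covering of a geometric manifold is geometric with the same model. Both $M$ and $\overline N$ are aspherical with $\pi_1(M)\cong H\cong\pi_1(\overline N)$, so they are homotopy equivalent (aspherical manifolds with isomorphic fundamental groups are homotopy equivalent).

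Finally I would invoke Theorem \ref{t:Wall4Dgeometries}: $M$ and $\overline N$ are homotopy equivalent geometric $4$-manifolds modeled on $Sol_{m\neq n}^4$ and $Sol_0^4$ respectively, so these two geometries must coincide, a contradiction. This rules out $M\geq N$, and the symmetric argument rules out $N\geq M$, establishing that $M$ and $N$ are incomparable under the domination relation.

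The substantive work is already packaged into Proposition \ref{p:injectivemappingtoriTn}, so the one step deserving care is the passage from ``$\pi_1$-injective of finite index'' to ``homotopy equivalent to a finite cover'': one must check that $f_*(\pi_1(M))$ genuinely corresponds to a geometric finite cover of $N$, so that Theorem \ref{t:Wall4Dgeometries} is applied to an honest manifold rather than to an abstract group, and that it is precisely the asphericity of both sides that licenses the passage from an isomorphism of fundamental groups to a homotopy equivalence. Everything else is a formal chain of implications.
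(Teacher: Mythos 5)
Your proposal is correct and follows exactly the paper's route: Proposition \ref{p:injectivemappingtoriTn} gives $\pi_1$-injectivity, and the contradiction then comes from Theorem \ref{t:Wall4Dgeometries} via the finite-index image, the corresponding finite cover, and asphericity. The intermediate steps you spell out (Lemma \ref{l:degreegroups}, passage to the cover $\overline N$, homotopy equivalence of aspherical manifolds with isomorphic fundamental groups) are precisely what the paper compresses into the remark that a geometric aspherical $4$-manifold carries $\mathbb{X}^4$ if and only if it is finitely covered by an $\mathbb{X}^4$ manifold.
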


Finally, in a similar vein, using Theorem \ref{t:mappingtorisolvable}, as well as Propositions \ref{p:nil4}, \ref{p:sol&sol} and \ref{p:sol1}, we obtain the following:

\begin{prop}\cite[Prop. 5.5 and 5.6]{Neoorder}\label{p:comparesolnil}
 If $M$ is a $Nil^4$ or $Sol_1^4$ manifold and $N$ is a $Sol_{m \neq n}^4$ or $Sol_0^4$ manifold, then there is no map of non-zero degree between $M$ and $N$.
\end{prop}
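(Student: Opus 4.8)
The plan is to handle both directions of the statement at once, by showing that any hypothetical non-zero degree map between such manifolds would have to be a $\pi_1$-isomorphism onto a finite cover, and then contradicting the rigidity of Thurston geometries recorded in Theorem \ref{t:Wall4Dgeometries}. The key structural input is that all four geometries involved have torsion-free polycyclic fundamental groups of Hirsch length $4$. Indeed, by Theorem \ref{t:mappingtorisolvable}, a $Sol_0^4$- or $Sol_{m\neq n}^4$-manifold is a mapping torus of $T^3$, so its fundamental group is $\Z^3\rtimes_A\Z$, while a $Nil^4$- or $Sol_1^4$-manifold is a mapping torus of a $Nil^3$-manifold, so its fundamental group is $\Gamma\rtimes_\psi\Z$ with $\Gamma$ a polycyclic lattice of Hirsch length $3$ in $Nil^3$. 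In every case the Hirsch length adds up to $4$, and the mapping-torus description makes torsion-freeness immediate (a torsion element would project trivially to the $\Z$-factor and hence lie in the torsion-free fibre group).

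Now suppose $f\colon X\to Y$ is a map of non-zero degree, where $X$ and $Y$ are modeled on two of these geometries. By Lemma \ref{l:degreegroups} the image $f_*(\pi_1(X))$ has finite index in $\pi_1(Y)$. Passing to the finite cover $\widetilde Y\to Y$ corresponding to this image (which carries the same geometry as $Y$) and lifting $f$, I obtain a map $\widetilde f\colon X\to\widetilde Y$ with $\widetilde f_*\colon\pi_1(X)\to\pi_1(\widetilde Y)$ surjective, both groups being torsion-free polycyclic of Hirsch length $4$. Since the Hirsch length is additive in extensions, a surjection of polycyclic groups of equal Hirsch length has a kernel of Hirsch length $0$, i.e. a finite kernel; torsion-freeness of $\pi_1(X)$ then forces $\widetilde f_*$ to be an isomorphism.

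Because $X$ and $\widetilde Y$ are aspherical, this isomorphism upgrades to a homotopy equivalence $X\simeq\widetilde Y$ by Whitehead's theorem, exactly as in the proof of Proposition \ref{p:asphericalHopf}. Theorem \ref{t:Wall4Dgeometries} then forces $X$ and $\widetilde Y$ to carry the same geometry, and as $\widetilde Y$ and $Y$ share a geometry, so do $X$ and $Y$. But $Nil^4$, $Sol_1^4$, $Sol_0^4$ and $Sol_{m\neq n}^4$ are pairwise distinct Thurston geometries, a contradiction; hence $\deg(f)=0$. The argument is symmetric in $X$ and $Y$, so it rules out maps of non-zero degree in both directions simultaneously.

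The step I expect to require the most care is this reduction: a priori a non-zero degree map is only $\pi_1$-surjective onto a finite-index subgroup and might \emph{collapse} fundamental groups, which is precisely the mechanism exploited on the right-hand side of Figure \ref{d:nonhypmaps} (for instance through Lemma \ref{l:factorizationbundles}). What rules out any collapse here is the equality of Hirsch lengths together with torsion-freeness, leaving no room for a non-trivial kernel. I note that this single argument also recovers the preceding proposition ($Nil^4$ versus $Sol_1^4$) and Corollary \ref{c:nonmapsbetweensol}, since all of these fundamental groups are torsion-free polycyclic of Hirsch length $4$ and the geometries are distinct. A less uniform route, closer to the hinted invariants, would instead use the persistent infinite cyclic center of $Nil^4$- and $Sol_1^4$-manifolds (Propositions \ref{p:nil4} and \ref{p:sol1}): when such a manifold dominates a $Sol_0^4$- or $Sol_{m\neq n}^4$-manifold, the central subgroup maps into the center of the target, which is trivial because the target is not presentable by products (Proposition \ref{p:sol&sol}); the map then factors through a $3$-dimensional aspherical quotient and has zero degree, while the reverse direction is controlled by the growth of the first Betti number (Lemma \ref{l:Betti}) for the $Nil^4$ target and by an explicit obstruction to a normal $\Z^3$ for the $Sol_1^4$ target.
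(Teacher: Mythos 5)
Your argument is correct, and it takes a genuinely different route from the one the paper points to. The paper derives this proposition (via \cite[Prop.~5.5 and 5.6]{Neoorder}) from geometry-specific structure: the persistent infinite cyclic center of $Nil^4$- and $Sol_1^4$-groups (Propositions \ref{p:nil4} and \ref{p:sol1}) against the trivial center of $Sol_0^4$- and $Sol_{m\neq n}^4$-groups (Proposition \ref{p:sol&sol}), forcing a factorization through a lower-dimensional aspherical space in one direction, with Betti-number and normal-subgroup considerations in the other --- essentially the ``less uniform route'' you sketch at the end. Your main argument instead runs the rigidity scheme that the paper reserves for the $Sol_0^4$ versus $Sol_{m\neq n}^4$ case (Proposition \ref{p:injectivemappingtoriTn} plus Theorem \ref{t:Wall4Dgeometries}), but replaces the eigenvalue/no-roots-of-unity input by the softer observation that all four fundamental groups are torsion-free (virtually) polycyclic of Hirsch length $4$: additivity of Hirsch length makes any $\pi_1$-surjection onto the relevant finite-index subgroup have finite, hence trivial, kernel, and Whitehead plus Wall's uniqueness of geometries finishes the job. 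This is sound --- the mapping-torus descriptions in Theorem \ref{t:mappingtorisolvable} do give torsion-free polycyclic groups of Hirsch length $4$, and a finite cover carries the same geometry --- and, as you note, it uniformly recovers the preceding proposition and Corollary \ref{c:nonmapsbetweensol} as well. What the paper's tailored arguments buy in exchange is independence from Wall's homotopy-rigidity theorem and an explicit identification of the obstructing structure (centers, presentability by products), which is reused elsewhere in the ordering, e.g.\ against product geometries. One small point of hygiene: for the $Nil^3$-manifold fibre you should say ``torsion-free virtually polycyclic'' (or justify polycyclicity via solvability plus the maximal condition); Hirsch length and its additivity work equally well in that generality, so nothing in the argument changes.
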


\subsubsection*{Non-product solvable manifolds vs virtual products}

We now indicate why there are no maps of non-zero degree between a $Nil^4$, $Sol_0^4$, $Sol_{m \neq n}^4$
or $Sol_1^4$ manifold and a manifold modeled on $\mathbb{X}^3 \times \R$ or on the reducible $\mathbb{H}^2 \times \mathbb{H}^2$ geometry. We need the following result, parts of which use the property {\em group not infinite-index presentable by products}, which will be defined and discussed briefly in Section~\ref{monotonicityKodaira}:

\begin{thm}\cite[Theorem F]{NeoIIPP}\label{t:productssolvable}
An aspherical geometric $4$-manifold $M$ is dominated by a non-trivial product if and only if it is finitely covered by a product. Equivalently, $M$ is modeled on one of the product geometries $\mathbb{X}^3 \times\R$ or the reducible $\mathbb{H}^2 \times \mathbb{H}^2$ geometry.
\end{thm}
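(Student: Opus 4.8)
The plan is to prove the sharper equivalence that an aspherical geometric $4$-manifold is dominated by a non-trivial product if and only if it is \emph{virtually a product}, and then to read off the list of geometries from the classification. First I would dispose of the easy implication: if $M$ is finitely covered by a product, the covering map is itself a non-zero degree map from a non-trivial product, so $M$ is dominated by a product. Conversely, by Theorem~\ref{t:hillmanproducts} every $\mathbb{X}^3\times\R$ manifold is virtually $N\times S^1$, and a reducible $\mathbb{H}^2\times\mathbb{H}^2$ manifold is virtually $\Sigma_g\times\Sigma_h$; together with Theorem~\ref{t:Wall4Dgeometries} this identifies the virtually-product aspherical geometric $4$-manifolds with exactly those modeled on a product geometry $\mathbb{X}^3\times\R$ or on the reducible $\mathbb{H}^2\times\mathbb{H}^2$ geometry. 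Thus the theorem reduces to showing that the remaining aspherical geometries---$Nil^4$, $Sol_0^4$, $Sol_{m\neq n}^4$, $Sol_1^4$, the irreducible $\mathbb{H}^2\times\mathbb{H}^2$, and the two hyperbolic geometries $\mathbb{H}^4$, $\mathbb{H}^2(\mathbb{C})$---are \emph{not} dominated by non-trivial products.

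The main engine here is the result of~\cite{KL} quoted after Definition~\ref{d:PP}: an aspherical (hence rationally essential) manifold whose fundamental group is not presentable by products admits no non-zero degree map from a product. For $Sol_0^4$ and $Sol_{m\neq n}^4$ this applies verbatim, since by Proposition~\ref{p:sol&sol} their fundamental groups are not presentable by products. For the irreducible $\mathbb{H}^2\times\mathbb{H}^2$ geometry, the fundamental group is an irreducible lattice in $\mathrm{PSL}(2,\R)\times\mathrm{PSL}(2,\R)$, and I would show that two commuting infinite subgroups generating a finite-index subgroup would violate irreducibility (the dense projections to each factor leave no room for a finite-index product image), so again the group is not presentable by products and~\cite{KL} finishes the case. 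The hyperbolic geometries $\mathbb{H}^4$ and $\mathbb{H}^2(\mathbb{C})$ are handled the same way: centralizers of infinite elements in these (Gromov hyperbolic, respectively rank-one) groups are too small to yield a finite-index product image, so their fundamental groups are not presentable by products.

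The delicate cases, and the place where I expect the real obstacle, are $Nil^4$ and $Sol_1^4$. By Propositions~\ref{p:nil4} and~\ref{p:sol1} these manifolds are (virtually) non-trivial $S^1$-bundles over a $Nil^3$, respectively $Sol^3$, manifold, and crucially the center of their fundamental group stays infinite cyclic in every finite cover. Because of this persistent central $\Z$, the crude criterion of~\cite{KL} is not directly decisive, and I would instead invoke the refinement by the property \emph{not infinite-index presentable by products} introduced in~\cite{NeoIIPP}. The argument I have in mind is: a non-zero degree map $f\colon P_1\times P_2\to M$ induces, via Lemma~\ref{l:degreegroups}, two commuting subgroups $f_*(\pi_1 P_1)$ and $f_*(\pi_1 P_2)$ generating a finite-index subgroup of $\pi_1(M)$; analysing how these interact with the central $\Z$ and with the $S^1$-bundle structure forces a virtual product splitting of $\pi_1(M)$ as $\Z$ times a $Nil^3$ or $Sol^3$ group. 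This would make $M$ virtually $Nil^3\times S^1$ or $Sol^3\times S^1$, contradicting the non-triviality of the bundle---equivalently, by Theorem~\ref{t:Wall4Dgeometries}, that $M$ would then carry the product geometry $Nil^3\times\R$ or $Sol^3\times\R$ rather than $Nil^4$ or $Sol_1^4$. Hence no such map exists.

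The hard part is precisely to make this last splitting argument rigorous for $Nil^4$ and $Sol_1^4$: unlike the solvable non-product geometries, their groups are \emph{not} ruled out by the crude non-presentability-by-products condition because of the central circle, so the whole weight of the proof rests on the finer \emph{infinite-index} presentability notion and on the fact (Propositions~\ref{p:nil4},~\ref{p:sol1}) that the central $\Z$ cannot be destroyed by passing to finite covers. Once this is in place, the contrapositive is complete and, combined with the easy direction and the geometric identification of the first paragraph, yields the stated equivalence.
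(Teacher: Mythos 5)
Your plan is correct and follows essentially the same route as the source: the paper only quotes this theorem from \cite{NeoIIPP}, but the machinery it assembles around it (Proposition~\ref{p:sol&sol} plus the \cite{KL} criterion for $Sol_0^4$ and $Sol_{m\neq n}^4$, the Margulis-type rigidity for the irreducible $\mathbb{H}^2\times\mathbb{H}^2$ and hyperbolic lattices, and the IIPP refinement of Theorems~\ref{t:IIPPcriterion} and~\ref{t:IIPPdomination} for the circle-bundle geometries $Nil^4$ and $Sol_1^4$ with persistent infinite cyclic center) is exactly the decomposition you propose. You also correctly isolate where the crude not-presentable-by-products criterion fails and the infinite-index version must take over, which is the actual crux of the argument in \cite{NeoIIPP}.
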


In particular, we have:

\begin{cor}
A $4$-manifold modeled on one of the geometries $Nil^4$, $Sol_0^4$, $Sol_{m \neq n}^4$ or $Sol_1^4$ is not dominated by products.
\end{cor}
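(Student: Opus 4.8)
The plan is to obtain this as an immediate consequence of Theorem~\ref{t:productssolvable}. That theorem characterises, among aspherical geometric $4$-manifolds, exactly those dominated by a non-trivial product: they are precisely the manifolds modeled on a product geometry $\mathbb{X}^3\times\R$ or on the reducible $\mathbb{H}^2\times\mathbb{H}^2$ geometry. So the whole task reduces to checking that none of $Nil^4$, $Sol_0^4$, $Sol_{m\neq n}^4$, $Sol_1^4$ appears on that list.

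First I would record that each of these four geometries is aspherical: they are modeled on solvable (in the first case nilpotent) simply connected Lie groups, so their quotients by lattices are aspherical and Theorem~\ref{t:productssolvable} applies. Next I would invoke the uniqueness of the geometry carried by a closed aspherical $4$-manifold, Theorem~\ref{t:Wall4Dgeometries}: a manifold modeled on $Nil^4$, $Sol_0^4$, $Sol_{m\neq n}^4$ or $Sol_1^4$ cannot simultaneously carry a product geometry, since Filipkiewicz's classification~\cite{Filipkiewicz} lists these four as distinct geometries, separate from every $\mathbb{X}^3\times\R$ and from $\mathbb{H}^2\times\mathbb{H}^2$. Hence such a manifold is not finitely covered by a product, and by the equivalence in Theorem~\ref{t:productssolvable} it is not dominated by a non-trivial product.

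There is essentially no obstacle at this level; all the content sits inside Theorem~\ref{t:productssolvable}. If one instead wanted a self-contained argument, the work would split into two regimes. For $Sol_0^4$ and $Sol_{m\neq n}^4$ the statement is cleanest: their fundamental groups are \emph{not presentable by products} (Proposition~\ref{p:sol&sol}), so by the main theorem of~\cite{KL} these rationally essential manifolds admit no dominant map from a non-trivial product. The genuinely delicate regime is $Nil^4$ and $Sol_1^4$, whose fundamental groups carry an infinite cyclic center (Propositions~\ref{p:nil4} and~\ref{p:sol1}) and are therefore \emph{not} excluded by the plain presentability obstruction. Here one must use the finer notion of a group being \emph{not infinite-index presentable by products} (alluded to before Theorem~\ref{t:productssolvable} and taken up in Section~\ref{monotonicityKodaira}); this is exactly where Theorem~\ref{t:productssolvable} does its real work, and it is the step I would expect to be the crux of a from-scratch proof.
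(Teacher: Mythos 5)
Your proposal is correct and matches the paper exactly: the corollary is stated there as an immediate consequence of Theorem~\ref{t:productssolvable}, since none of $Nil^4$, $Sol_0^4$, $Sol_{m\neq n}^4$, $Sol_1^4$ is a product geometry $\mathbb{X}^3\times\R$ or the reducible $\mathbb{H}^2\times\mathbb{H}^2$ geometry. Your additional remarks on where the real work lies (not presentable by products for $Sol_0^4$ and $Sol_{m\neq n}^4$, the IIPP refinement for $Nil^4$ and $Sol_1^4$) accurately reflect how Theorem~\ref{t:productssolvable} is proved in~\cite{NeoIIPP}.
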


The proof of the converse uses again the structure theorems for the geometries $Nil^4$, $Sol_0^4$, $Sol_{m \neq n}^4$ and $Sol_1^4$ (see Section \ref{ss:Thurstonclassification}), as well as the growth of their Betti numbers (see for example \cite[Sections 8.6 and 8.7]{Hillman} and \cite[Section 6]{NeoIIPP}):

\begin{prop}\cite[Prop. 5.8 and 5.9]{Neoorder}\label{p:nilsolvsproducts}
 A manifold modeled on one of the geometries $Nil^4$, $Sol_0^4$, $Sol_{m \neq n}^4$ or $Sol_1^4$ does not dominate any manifold modeled on a geometry $\mathbb{X}^3 \times \R$ or the reducible $\mathbb{H}^2\times \mathbb{H}^2$ geometry.
\end{prop}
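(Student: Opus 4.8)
The plan is to combine the group-theoretic constraint of Lemma~\ref{l:degreegroups} with the Betti-number monotonicity of Lemma~\ref{l:Betti}, after reducing every admissible target to an honest product by passing to finite covers. Write $M$ for a manifold carrying one of the four geometries $Nil^4$, $Sol_0^4$, $Sol_{m\neq n}^4$, $Sol_1^4$, and suppose for contradiction that $f\colon M\to N$ has non-zero degree, where $N$ carries a geometry $\mathbb{X}^3\times\R$ or the reducible $\mathbb{H}^2\times\mathbb{H}^2$ geometry. The argument splits according to whether $\pi_1(M)$ is virtually nilpotent (the case $Nil^4$) or virtually polycyclic but not virtually nilpotent (the cases $Sol_0^4$, $Sol_{m\neq n}^4$, $Sol_1^4$); in all cases $\pi_1(M)$ is virtually polycyclic, being the fundamental group of a solvmanifold.

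First I would use Lemma~\ref{l:degreegroups} to cut down the list of possible targets. Since $f$ has non-zero degree, $f_*(\pi_1(M))$ has finite index in $\pi_1(N)$; being a quotient of $\pi_1(M)$ it is virtually nilpotent (resp. virtually polycyclic), and these properties pass to finite-index overgroups, so $\pi_1(N)$ is virtually nilpotent (resp. virtually polycyclic). Among the product geometries, only $\R^4$ and $Nil^3\times\R$ have virtually nilpotent fundamental group, and only these together with $Sol^3\times\R$ have virtually polycyclic fundamental group; the remaining targets $\mathbb{H}^3\times\R$, $\widetilde{SL_2}\times\R$, $\mathbb{H}^2\times\R^2$ and reducible $\mathbb{H}^2\times\mathbb{H}^2$ contain, up to finite index and a Euclidean factor, a hyperbolic-surface or hyperbolic-$3$-manifold group, hence are not virtually polycyclic. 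Thus a $Nil^4$ manifold can map with non-zero degree only to an $\R^4$ or $Nil^3\times\R$ manifold, and a $Sol^4$-type manifold only to an $\R^4$, $Nil^3\times\R$ or $Sol^3\times\R$ manifold. In particular this step already disposes of the two hyperbolic product geometries and of $\mathbb{H}^2\times\mathbb{H}^2$; for the latter one may alternatively invoke $\|M\|=0<\|N\|$ as in Remark~\ref{r:othertech}, although that shortcut fails for $\mathbb{H}^3\times\R$, whose simplicial volume also vanishes.

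Next I would derive a contradiction from first Betti numbers. By Theorem~\ref{t:hillmanproducts} the surviving targets are finitely covered by $T^4$, by $\mathcal{N}\times S^1$ with $\mathcal{N}$ a $Nil^3$ manifold, or by $\Xi\times S^1$ with $\Xi$ a $Sol^3$ manifold, whose first Betti numbers are $4$, $3$ and $2$ respectively (by K\"unneth, using $b_1(\mathcal N)=2$ and $b_1(\Xi)=1$). Since domination is inherited by suitable finite covers of domain and target, replacing $N$ by such a product cover yields a finite cover $\overline M$ of $M$ --- still carrying the same geometry by Theorem~\ref{t:Wall4Dgeometries} --- that dominates the product. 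The key point is that $b_1$ of the domain is stable under finite covers: by Theorem~\ref{t:mappingtorisolvable} (together with Propositions~\ref{p:nil4} and~\ref{p:sol1}) each such $M$ is a mapping torus whose monodromy acts on the rational first homology of the fiber either unipotently and non-trivially (for $Nil^4$, forcing one-dimensional coinvariants and hence $b_1=2$) or with no root-of-unity eigenvalue (for the three $Sol^4$ geometries, forcing vanishing coinvariants and hence $b_1=1$); passing to a finite cover replaces the monodromy by a power restricted to a sublattice and preserves both properties. Comparing with the targets, in the $Nil^4$ case $b_1(\overline M)=2$ is smaller than both $3$ and $4$, while in the $Sol^4$ cases $b_1(\overline M)=1$ is smaller than $2$, $3$ and $4$; either way Lemma~\ref{l:Betti} is violated, completing the proof.

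I expect the main obstacle to be the cover-stability of the first Betti number, that is, controlling the monodromy action on $H_1(\text{fiber};\Q)$ in every finite cover; this is exactly where the structure theorems of Section~\ref{ss:Thurstonclassification} enter, since one must know that the fiber is a $T^3$ or a $Nil^3$ manifold and that its eigenvalue data (unipotent, or free of roots of unity) is intrinsic to the geometry and survives taking powers and restricting to finite-index sublattices. A secondary subtlety is the bookkeeping of the group-theoretic reduction --- checking that quotients and finite-index overgroups preserve virtual nilpotence and virtual polycyclicity, and that precisely the three solvable product geometries remain after this step --- but this is routine once the growth type of each geometry is recorded.
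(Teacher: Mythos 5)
Your argument is correct and follows essentially the same route the paper indicates for this proposition: the structure theorems for the four solvable geometries (solvability of the fundamental groups, the mapping-torus descriptions of Theorem \ref{t:mappingtorisolvable}) combined with first-Betti-number comparisons via Lemma \ref{l:Betti}. The solvable-quotient reduction you use to eliminate the targets whose groups contain surface or hyperbolic $3$-manifold subgroups, together with the cover-stable computations $b_1\le 2$ for $Nil^4$ and $b_1=1$ for the three $Sol^4$-type geometries against $b_1=4,3,2$ for the surviving product targets, are exactly the ingredients the paper's sketch alludes to.
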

 
\subsubsection*{The irreducible $\mathbb{H}^2 \times \mathbb{H}^2$ geometry}\label{ss:irreducibleH2xH2}

Finally, we deal with the irreducible $\mathbb{H}^2 \times \mathbb{H}^2$ geometry. 

\begin{prop}\label{p:irreducible}
An irreducible $\mathbb{H}^2 \times \mathbb{H}^2$ manifold $M$ is not comparable under the domination relation with any other manifold possessing a non-hyperbolic aspherical geometry.
\end{prop}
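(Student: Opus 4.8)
The plan is to treat the two directions of the domination relation separately, organising the possible targets by the structure of their fundamental group. Write $\Gamma:=\pi_1(M)$; since $M$ is an irreducible $\mathbb{H}^2\times\mathbb{H}^2$ manifold, $\Gamma$ is a torsion-free irreducible lattice in the semisimple Lie group $\mathrm{PSL}(2,\R)\times\mathrm{PSL}(2,\R)$, which has trivial centre and real rank two. In particular $\Gamma$ has trivial centre (by Borel density) and, being irreducible, is not presentable by products in the sense of Definition \ref{d:PP} (this also follows from the normal subgroup theorem invoked below). The key observation is that every manifold $N$ carrying a non-hyperbolic aspherical geometry different from the irreducible $\mathbb{H}^2\times\mathbb{H}^2$ one falls into at least one of two families: either $\pi_1(N)$ is virtually solvable (the geometries $Nil^4$, $Sol^4_0$, $Sol^4_{m\neq n}$, $Sol^4_1$, as well as $\R^4$, $Nil^3\times\R$ and $Sol^3\times\R$), or $N$ is finitely covered by a non-trivial product of two aspherical manifolds with infinite fundamental groups (the geometries $\mathbb{X}^3\times\R$ and the reducible $\mathbb{H}^2\times\mathbb{H}^2$).

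For the direction $N\ngeq M$, suppose some such $N$ dominates $M$. If $\pi_1(N)$ is virtually solvable, hence amenable, then $\|N\|=0$, while $\|M\|>0$ because $M$ is a closed locally symmetric space of non-compact type; the functoriality of the simplicial volume (Lemma \ref{l:monotonesimplicial}) then gives $0=\|N\|\geq|d|\,\|M\|>0$, a contradiction. If instead $N$ is finitely covered by a non-trivial product $P$, then $P\geq N\geq M$ would exhibit $M$ as dominated by a product; but irreducibility of $M$ means precisely that $M$ is not finitely covered by a product, so Theorem \ref{t:productssolvable} rules this out. Hence no $N$ dominates $M$.

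The reverse direction $M\ngeq N$ is the main obstacle, because the admissible targets may have vanishing simplicial volume and small Betti numbers, so that no norm or homological estimate is available; here I would argue purely group-theoretically. Assume $f\colon M\to N$ has non-zero degree. By Lemma \ref{l:degreegroups} the image $f_*(\Gamma)$ has finite index in $\pi_1(N)$ and is therefore infinite. Since $\Gamma$ is an irreducible lattice in the rank-two group $\mathrm{PSL}(2,\R)\times\mathrm{PSL}(2,\R)$, the Margulis normal subgroup theorem forces $\ker f_*$ to be either finite or of finite index; as $f_*(\Gamma)$ is infinite the kernel must be finite, and as $\Gamma$ is torsion-free the kernel is trivial. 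Thus $f_*$ embeds $\Gamma$ as a finite-index subgroup of $\pi_1(N)$, and a contradiction now follows from the structure of $\pi_1(N)$. Indeed, if $\pi_1(N)$ is virtually solvable, then its finite-index subgroup $\Gamma$ is virtually solvable, contradicting the fact that a lattice in $\mathrm{PSL}(2,\R)\times\mathrm{PSL}(2,\R)$ contains non-abelian free subgroups. And if $\pi_1(N)$ is virtually a product $G_1\times G_2$ of two infinite groups, then intersecting the factors with $\Gamma$ produces a finite-index subgroup $G_1'\times G_2'\leq\Gamma$ with both $G_i'$ infinite; the inclusion $G_1'\times G_2'\hookrightarrow\Gamma$ then witnesses $\Gamma$ as presentable by products, which is impossible. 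This settles both directions, and the crux is the passage, via Margulis rigidity, from the merely finite-index image supplied by Lemma \ref{l:degreegroups} to an honest embedding of $\Gamma$, which is exactly where the rank-two irreducibility of the lattice is indispensable.
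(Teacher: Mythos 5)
Your proof is correct, and it shares with the paper's its two load-bearing ingredients (Margulis's normal subgroup theorem and Theorem \ref{t:productssolvable}), but both endgames are genuinely different. For $M\ngeq N$, the paper passes to a finite cover so that the map is $\pi_1$-surjective, uses the normal subgroup theorem to conclude that $\pi_1(f)$ is an isomorphism, and then derives the contradiction from asphericity plus Theorem \ref{t:Wall4Dgeometries} (a homotopy type carries at most one geometry); you instead keep the finite-index embedding $\Gamma\hookrightarrow\pi_1(N)$ supplied by Lemma \ref{l:degreegroups} and rule it out case by case, since $\Gamma$ is neither virtually solvable nor virtually a product of two infinite groups. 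Your route avoids Wall's uniqueness theorem but needs, and elides, a short Goursat-type step: a finite-index subgroup of $G_1\times G_2$ contains $(H\cap G_1)\times(H\cap G_2)$ with finite index and with both factors infinite, which is what actually exhibits $\Gamma$ as reducible --- add a sentence there. For $N\ngeq M$ the treatment of sources covered by products coincides with the paper's, but for the four non-product solvable geometries you argue via $\|N\|=0$ (amenable fundamental group) against $\|M\|>0$ (Lafont--Schmidt, Bucher), whereas the paper reuses the normal subgroup theorem on the image of the infinite-index free abelian normal subgroup of $\pi_1(N)$; your norm argument is shorter and perfectly valid, while the paper's keeps the proof purely algebraic, a preference it states explicitly in Remark \ref{r:othertech}.
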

\begin{proof}
Suppose first that $f \colon M \to N$ is a map of non-zero
degree, where $N$ is an aspherical manifold which is not modeled on the irreducible $\mathbb{H}^2 \times \mathbb{H}^2$ geometry. After possibly passing to a finite cover, we may assume that $f$ is $\pi_1$-surjective; in particular, we have the following short exact sequence
\[
 1 \longrightarrow \ker (\pi_1(f)) \longrightarrow \pi_1(M) \stackrel{\pi_1(f)}\longrightarrow \pi_1(N) \longrightarrow 1.
\]
By \cite[Theorem IX.6.14]{Margulis}, the kernel $\ker (\pi_1(f))$ is trivial, and thus $\pi_1(f)$ is an isomorphism. Since $M$ and $N$ are
aspherical, we conclude that $M$ is homotopy equivalent to $N$, which contradicts Theorem \ref{t:Wall4Dgeometries}. Hence, $M \ngeq N$.

Conversely, we claim that $M$ is not dominated by any non-hyperbolic geometric aspherical $4$-manifold $N$. Since $M$ is not dominated by
products (e.g., by Theorem \ref{t:productssolvable}), it suffices to show that $N\ngeq M$ when $N$ is modeled on one of the geometries $Sol_1^4$, $Nil^4$, $Sol_{m \neq n}^4$
or
$Sol_0^4$. For any of those four geometries, $\pi_1(N)$ has a normal
subgroup of infinite index, which is free Abelian of rank one (for the geometries $Sol_1^4$ and $Nil^4$) or three (for the geometries $Sol_{m \neq n}^4$ and $Sol_0^4$);
see Section \ref{ss:Thurstonclassification} and~\cite[Section 6]{NeoIIPP}.
If $f \colon N \to M$ is a ($\pi_1$-surjective) map of non-zero degree, then by \cite[Theorem IX.6.14]{Margulis} either $f$ factors through a
lower dimensional aspherical manifold or $\pi_1(M)$ is free Abelian of finite rank. None of these cases can occur.
\end{proof}

We have now completed the proof of Theorem \ref{t:order4}.

\section{Geometric Kodaira dimension, monotonicity, and simplicial volume}

The Kodaira dimension is an important tool in the classification of complex manifolds, and has been generalised to various classes, such as symplectic manifolds and almost complex manifolds; we refer to~\cite{Lisurvey} for a survey. Following Thurston's geometrisation picture, we will introduce an axiomatic definition for the Kodaira dimension and show that this geometric Kodaira dimension is monotone with respect to the domination relation for manifolds of dimension $\leq5$. We will also compare the geometric Kodaira dimension with other, existing notions of Kodaira dimension, and  establish a relationship to the simplicial volume. 

\subsection{Kodaira dimension}

A substantial attempt to introduce a notion of Kodaira dimension\index{Kodaira dimension}\index{dimension!Kodaira} for non-complex manifolds, in particular for odd-dimensional manifolds, was made by W. Zhang~\cite{Zhang}. Recall that Kodaira's original approach defines the holomorphic Kodaira dimension $\kappa^h(M, J)$\index{Kodaira dimension!holomorphic} for a complex manifold $(M,J)$ of real dimension $2m$ by 
\begin{equation}\label{eq:Kodairah}
\kappa^h(M,J)= \left\{\begin{array}{ll}
        -\infty, & \text{if } P_l(M,J)=0 \text{ for all } l\geq1;\\
        \ \ 0, & \text{if } P_l(M,J)\in\{0,1\}, \text{ but } \not\equiv0 \text{ for all } l\geq1;\\
        \ \ k, & \text{if } P_l(M,J)\sim cl^k, c > 0.
        \end{array}\right.
\end{equation}
where $P_l(M, J)$ denotes the $l$-th plurigenus of the complex manifold $(M, J)$ defined by $P_l(M,J)=h^0(K_J{\otimes}^l)$, with $K_J$ the canonical bundle of $(M, J)$. Zhang introduced the notion of {\em geometric} (or {\em topological}) {\em Kodaira dimension} for $3$-manifolds and geometric $4$-manifolds, following the principle suggested by \eqref{eq:Kodairah} that compact geometries have the smallest value ($-\infty$), while hyperbolic geometries have the biggest value (half of the dimension of the manifold). Subsequently, Zhang and I~\cite{NZ} introduced a more unified approach which we present below. As we shall see, this unification includes as well many non-geometric situations.

\subsubsection{Axiomatic definition of $\kappa^g$}\label{ss:ad}

Let $\mathcal G$ be the smallest class of manifolds which contains all of the following:
\begin{itemize}
\item points;
\item manifolds that carry a compact geometry;
\item solvable manifolds (solvable-by-solvable);
\item irreducible symmetric spaces of non-compact type;
\item fibrations or manifolds that carry a fibered geometry, so that their fiber and base (geometries) belong in $\mathcal G$.
\end{itemize}

\begin{defn}\label{d:gK}
Let $M$ be an $n$-manifold in $\mathcal G$. We define its {\em (geometric) Kodaira dimension}\index{manifold!geometric}
\[
\kappa^g(M)\in\biggl\{-\infty,0,1,\frac{3}{2},2,...,\frac{n}{2}\biggl\}
\]
by the following axioms:
\begin{itemize}
\item[(A0)] If $M$ is a point, then we set $\kappa^g(M)=0$;
\item[(A1)] If $M$ carries a compact geometry, then $\kappa^g(M)=-\infty$;
\item[(A2)] If $M$ is of solvable type,
then $\kappa^g(M)=0$;
\item[(A3)] If $M$ is irreducible symmetric of non-compact type, then $\kappa^g(M)=\frac{n}{2}$;
\item[(A4)] If $M$ is a fiber bundle or carries a fibered geometry $\mathbb F\to\mathbb X^n\to\mathbb B$, such that it does not satisfy any of (A1)-(A3), then
$$
\kappa^g(M)=\sup_{F,B}\{\kappa^g(F)+\kappa^g(B)\},
$$
where the supremum runs over all manifolds $F$ and $B$ which can occur in a fibration $F\to M\to B$ or are modeled on a geometry $\mathbb F$ and $\mathbb B$ respectively, and which  satisfy one of (A1)-(A3).
\end{itemize}
\end{defn}

Note  the following consequence of the above axioms:

\begin{lem}\label{l:cover}
Let  $M\in\mathcal G$. If $\overline M\to M$ is a finite cover, then $\overline M\in\mathcal G$ and $\kappa^g(\overline M)=\kappa^g(M)$.
 \end{lem}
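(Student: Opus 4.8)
The plan is to argue by induction on the number of steps used to build $M$ as an element of $\mathcal G$, establishing the two assertions -- membership $\overline M\in\mathcal G$ and the equality $\kappa^g(\overline M)=\kappa^g(M)$ -- simultaneously. The base of the induction consists of the generators of $\mathcal G$ singled out by axioms (A0)--(A3), and the inductive step is the fibration clause governed by (A4).

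For the membership claim I would check that each generating family, together with the fibration operation, is closed under finite covers. A connected finite cover of a point is a point. If $M=\mathbb{X}^n/\Gamma$ carries a compact (resp. irreducible symmetric of non-compact type) geometry $\mathbb{X}^n$, then a finite cover corresponds to a finite-index subgroup $\Gamma'\leq\Gamma$ and equals $\mathbb{X}^n/\Gamma'$, so it carries the same geometry $\mathbb{X}^n$; in particular irreducibility of the model and the dimension $n$ are unchanged. If $M$ is of solvable type, its fundamental group is virtually solvable, and a finite-index subgroup is again virtually solvable, so $\overline M$ is of solvable type. The one substantive point is the fibration clause: given a bundle $F\to M\to B$ with aspherical $F,B\in\mathcal G$, the fibration exact sequence $1\to\pi_1(F)\to\pi_1(M)\to\pi_1(B)\to 1$ lets me intersect the finite-index subgroup $\Lambda=\pi_1(\overline M)\leq\pi_1(M)$ with the normal subgroup $\pi_1(F)$ and project it to $\pi_1(B)$; this produces finite covers $\overline F\to F$ and $\overline B\to B$ and exhibits $\overline M$ as a bundle $\overline F\to\overline M\to\overline B$, whence $\overline M\in\mathcal G$ by induction. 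In the purely geometric (including non-aspherical) situation one instead invokes the structure theorems for fibered geometries (Theorems \ref{t:hillmanproducts}, \ref{t:mappingtorisolvable} and Propositions \ref{p:nil4}, \ref{p:sol1}), which show that a finite cover of an $\mathbb{X}^n$-manifold is again an $\mathbb{X}^n$-manifold and therefore inherits the canonical fibered structure $\mathbb F\to\mathbb{X}^n\to\mathbb B$.

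For the invariance of the value, axioms (A0)--(A3) are immediate from the membership analysis: being a point, carrying a fixed geometry $\mathbb{X}^n$ of compact/solvable/irreducible-symmetric type, and the dimension $n$ are all preserved, so the assigned values $0$, $-\infty$, $0$, $n/2$ are unchanged; since these three clauses are mutually exclusive, the cover falls under the same axiom as $M$. For the fibration axiom (A4) I would prove the two inequalities separately. The inequality $\kappa^g(\overline M)\geq\kappa^g(M)$ follows by lifting: every fibering $F\to M\to B$ with $F,B$ satisfying (A1)--(A3) produces, as above, a fibering $\overline F\to\overline M\to\overline B$ with $\overline F,\overline B$ of the same atomic type, so by the already-established (A1)--(A3) invariance $\kappa^g(\overline F)+\kappa^g(\overline B)=\kappa^g(F)+\kappa^g(B)$, and every contribution to the supremum for $M$ is realised for $\overline M$.

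The reverse inequality $\kappa^g(\overline M)\leq\kappa^g(M)$ is the main obstacle, since a priori $\overline M$ could admit fibered decompositions not descending from $M$, enlarging its supremum. The plan is to exploit the rigidity built into the definition: the atomic pieces allowed in (A4) carry compact, solvable, or irreducible symmetric geometries, and for geometric $M$ the relevant fibered decomposition is the canonical one coming from the geometry $\mathbb{X}^n$, which -- as recorded in the membership step -- is shared by every finite cover. Thus the fiber and base geometries $\mathbb F,\mathbb B$, whose $\kappa^g$-values are fixed by type and dimension through (A1)--(A3), are the same for $\overline M$ and $M$, forcing the two suprema to coincide. I expect the delicate bookkeeping to lie exactly here: one must confirm that, within $\mathcal G$, no finite cover acquires a genuinely new admissible fibration with strictly larger $\kappa^g(F)+\kappa^g(B)$, which is precisely the content of the geometric uniqueness statement (cf. Theorem \ref{t:Wall4Dgeometries}) combined with the fact that the summands in (A4) depend only on geometry type and dimension, both cover-invariant.
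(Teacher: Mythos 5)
The paper offers no proof of this lemma at all: it is stated as a bare ``consequence of the above axioms'', so there is no argument of record to measure yours against. Your skeleton --- induction on the construction of $M$ inside $\mathcal G$, with (A0)--(A3) as base cases and (A4) as the inductive step --- is the natural formalisation, and the base cases together with the inequality $\kappa^g(\overline M)\geq\kappa^g(M)$ are handled correctly: a finite cover of an $\mathbb{X}^n$-manifold is again an $\mathbb{X}^n$-manifold, finite-index subgroups of solvable groups are solvable, and an admissible fibration $F\to M\to B$ does lift to $\overline F\to\overline M\to\overline B$ (one point worth recording explicitly: the lifted fiber is connected because, by your choice of $\overline B$, the group $\pi_1(\overline M)$ surjects onto $\pi_1(\overline B)$).

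The genuine gap is the one you yourself flag, namely $\kappa^g(\overline M)\leq\kappa^g(M)$ in the (A4) case, and your proposed resolution does not close it. Theorem \ref{t:Wall4Dgeometries} and its analogues only say that a finite cover of a geometric manifold carries the same geometry; this controls the \emph{geometric} fiberings $\mathbb F\to\mathbb X^n\to\mathbb B$, which are indeed shared by $M$ and $\overline M$, but it says nothing about topological fibrations $\overline F\to\overline M\to\overline B$ of the cover that neither descend to $M$ nor are induced by the geometry --- and these are exactly what the supremum in (A4) ranges over. Finite covers of $3$- and $4$-manifolds do in general fiber in more ways than the manifolds they cover (virtual fibering is the standard phenomenon), so the a priori possibility of a strictly larger supremum upstairs must be excluded by an argument, not by the uniqueness of the geometry. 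Moreover, $\mathcal G$ contains non-geometric members --- the fiber bundles of Theorem \ref{t:fiberbundlesdim4} and the non-geometric $3$-manifolds of Table \ref{t:3dKodaira} --- for which there is no ambient geometry to invoke at all. In the cases treated in the paper the needed exclusion is ultimately extracted from classification results and auxiliary invariants rather than from the axioms alone; for instance, a finite cover of $N\times S^1$ with $N$ hyperbolic cannot fiber as a hyperbolic-surface bundle over a hyperbolic surface because its simplicial volume vanishes while such bundles have positive simplicial volume (Theorem \ref{t:fiberbundlesdim4sv}). To be fair, the paper asserts the lemma with no justification either; your attempt makes the real difficulty visible rather than creating it, but as written the reverse inequality remains unproved.
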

 
\subsubsection{Classification in dimensions $\leq5$}
We will now classify manifolds up to dimension five according to their Kodaira dimension.

\subsection*{0-manifolds}
By (A0), the Kodaira dimension of a point is zero.

\subsection*{1-manifolds}
The circle $S^1=\R/\Z$ is modeled on the real line, hence $\kappa^g(S^1)=0$ by (A2).

\subsection*{2-manifolds}\index{Thurston geometry}
Let $\Sigma_h$ be a surface of genus $h$. First, if $h=0$, then $\Sigma_0=S^2$, hence $\kappa^g(\Sigma_0)=0$ by (A1). Next, if $h=1$, then $\Sigma_1=T^2=\R^2/\Z^2$ and thus $\kappa^g(\Sigma_1)=0 $ by (A2). Lastly, if $h\geq2$, then $\Sigma_h$ is hyperbolic, and so $\kappa^g(\Sigma_h)=2/2=1$ by (A3). The above are summarised in Table \ref{eq.dim2}.
\begin{table}[!ht]
\centering
\begin{tabular}{c|c}
$\kappa^g$ & Geometry\\
\hline
         $-\infty$   & $S^2$\\
         $0$    & $\R^2$ \\
$1$ & $\mathbb{H}^2$\\            
\end{tabular}
\vspace{9pt}
\caption{{\small The Kodaira dimension for surfaces}}\label{eq.dim2}
\end{table}

\subsection*{3-manifolds}
The geometry $S^3$ satisfies (A1), the geometries $\R^3$, $Nil^3$ and $Sol^3$ satisfy (A2), and the geometry $\mathbb{H}^3$ satisfies (A3). We are left with three geometries which do not satisfy any of (A1)-(A3). For  $S^2\times\R$, Axiom (A4) and the Kodaira dimensions for 1- and 2-manifolds imply
\[
\kappa^g(S^2\times S^1)=\kappa^g(S^2)+\kappa^g(S^1)=-\infty.
\]
Finally, since any 3-manifold $M$ modeled on the $\mathbb{H}^2\times\R$ or $\widetilde{SL_2}$ geometry is finitely covered by an $S^1$ bundle over a hyperbolic surface $\Sigma_h$, Axiom (A4), Lemma \ref{l:cover} and the Kodaira dimensions for $S^1$ and hyperbolic surfaces imply 
\[
\kappa^g(M)=\kappa^g(S^1)+\kappa^g(\Sigma_h)=1.
\]
Table \ref{eq.dim3} summarises the above values of $\kappa^g$.
\begin{table}[!ht]
\begin{tabular}{c|c}
$\kappa^g$ & Geometry\\
\hline
         $-\infty$   & $S^3$, $S^2\times\R$\\
         $0$    & $\R^3$, $Nil^3$, $Sol^3$ \\
$1$ & $\mathbb{H}^2\times\R$, $ \widetilde{SL_2}$\\     
$\frac{3}{2}$ & $\mathbb{H}^3$\\          
\end{tabular}
\vspace{9pt}
\caption{{\small The Kodaira dimension for geometric 3-manifolds}}\label{eq.dim3}
\end{table}

\subsection*{4-manifolds}
If a manifold $M$ is modeled on one of the three compact geometries $S^4$, $\CP^2$ and $S^2\times S^2$, then $\kappa^g(M)=-\infty$ by (A1). The geometries $\R^4$, $Nil^4$, $Nil^3\times\R$, $Sol_0^4$, $Sol_1^4$ and $Sol_{m,n}^4$ satisfy (A2), hence  $\kappa^g(M)=0$ for any manifold $M$ modeled on any of these geometries. If $M$ is modeled on one among $\mathbb{H}^4$, $\mathbb{H}^2(\mathbb C)$ or the irreducible $\mathbb{H}^2\times\mathbb{H}^2$ geometry, then it satisfies (A3), hence $\kappa^g(M)=4/2=2$. We are left with seven geometries which fall in Axiom (A4): If $M$ carries one among the geometries $S^2\times\R^2$, $S^2\times\mathbb{H}^2$ or $S^3\times\R$, then it is finitely covered by a manifold which is a fiber bundle with fiber one of the compact manifolds $S^2$ or $S^3$. Thus $\kappa^g(M)=-\infty$, because $\kappa^g(S^n)=-\infty$ for $n\geq2$. If $M$ is modeled on one of the geometries $\mathbb{H}^2\times\R^2$ or $\widetilde{SL_2}\times\R$, then $\kappa^g(M)=1$ by the corresponding classifications in lower dimensions. Similarly, if $M$ is modeled on $\mathbb{H}^3\times\R$, then $\kappa^g(M)=3/2$. Finally, if $M$ carries the reducible geometry $\mathbb{H}^2\times\mathbb{H}^2$, then it is finitely covered by a product of two hyperbolic surfaces, hence $\kappa^g(M)=2$, since  hyperbolic surfaces have Kodaira dimension one. We summarise these values in Table \ref{eq.dim4}. 

\begin{table}[!ht]
\begin{tabular}{c|c}
$\kappa^g$ & Geometry\\
\hline
         $-\infty$   & $S^4$, $\CP^2$, $S^2\times \mathbb{X}^2$, $S^3\times\R$\\
         $0$    & $\R^4$, $Nil^4$, $Nil^3\times\R$, $Sol_{m,n}^4$, $Sol_0^4$, $Sol_1^4$\\
$1$ & $\mathbb{H}^2\times\R^2$, $\widetilde{SL_2}\times\R$\\     
$\frac{3}{2}$ & $\mathbb{H}^3\times\R$\\     
$2$ &  $\mathbb{H}^4$,  $\mathbb{H}^2(\mathbb C)$, $\mathbb{H}^2\times\mathbb{H}^2$\\      
\end{tabular}
\vspace{9pt}
\caption{{\small The Kodaira dimension for geometric 4-manifolds}}\label{eq.dim4}
\end{table}

\begin{rem}\label{r:variousfibrations}
The geometry $\mathbb H^3\times\R$ is one of the first examples which indicate our new approach to introduce systematically half-integer values for $\kappa^g$, distinguishing thus further the various classes of manifolds by their Kodaira dimension. In addition, this example reveals the usefulness and necessity of (A4): A 4-manifold $M$ modeled on the geometry $\mathbb H^3\times\R$ is finitely covered by $F\times S^1$ for some hyperbolic $3$-manifold $F$. In addition, $F$ is (up to finite covers) a mapping torus of a pseudo-Anosov diffeomorphism of a hyperbolic surface $\Sigma$ (see Table~\ref{table:3geom}), hence, in particular, $M$ is (covered by) a fiber bundle $\Sigma\to M\to T^2$. Thus, we compute by the values of $\kappa^g$ in dimensions $\leq3$ that
\[
\kappa^g(M)=\sup\{\kappa^g(F)+\kappa^g(S^1),\kappa^g(\Sigma)+\kappa^g(T^2)\}=\biggl\{\frac{3}{2},1\biggl\} =\frac{3}{2}.
\]
\end{rem}

\begin{rem}\label{comparingKodairasol}
The values of the geometric Kodaira dimension of $4$-manifolds match with the values of the holomorphic Kodaira dimension for K\"ahler manifolds. However, according to (A2), the Kodaira dimension for $Sol_0^4$ and $Sol_1^4$ manifolds is zero instead of $-\infty$ as defined in~\cite{Zhang}, 
following Wall's scheme for complex non-K\"ahler surfaces~\cite{Wall2}. 
We could have imposed further conditions (e.g., on the virtual second Betti number) so that our Kodaira dimension for  those manifolds is $-\infty$, 
however, we have chosen to keep our axiomatic approach natural with minimal assumptions. Indeed, the value $\kappa^g=0$ here not only follows by (A2), but it is also compatible with (A4).
\end{rem}

\subsection*{5-manifolds}
The 5-dimensional Thurston geometries were classified by Geng~\cite{Geng1}. According to Geng's list, there exist fifty eight geometries, of which fifty four are realised by compact manifolds. We will only enumerate the latter geometries according to Definition \ref{d:gK}, and refer the reader to the three papers from Geng's thesis~\cite{Geng1,Geng2,Geng3}, as well as to the references in~\cite{Geng1}, for further details. Finally, as it is remarked in~\cite[Section 4]{Geng1}, a similar classification for the Thurston geometries was partially done in dimensions six and seven. In particular, one can use Definition \ref{d:gK} to determine the Kodaira dimensions of those manifolds.

\subsubsection*{(A1).}
There are three geometries of compact type, namely, the 5-sphere $S^5$, the Wu symmetric manifold $SU(3)/SO(3)$, and $S^2\times S^3$. If a manifold $M$ carries any of these geometries, then 
$\kappa^g(M)=-\infty$.

\subsubsection*{(A2).}
There are twenty geometries of solvable type. First, there exist two nilpotent and six solvable but not nilpotent geometries of type $\R^4\rtimes\R$, denoted by
$A_{5,1}, A_{5,2}$  and $A_{5,7}^{a,b,-1-a-b}$, $A_{5,7}^{1,-1-a,-1+a}, A_{5,7}^{1,-1,-1}, A_{5,8}^{-1}, A_{5,9}^{-1,-1}, A_{5,15}^{-1}$
respectively. Next, there are two nilpotent semi-direct products $Nil^4\rtimes\R$, denoted by $A_{5,5}$ and $A_{5,6}$. Furthermore, there is one nilpotent and one solvable but not nilpotent geometry of type $(\R\times Nil^3)\rtimes\R$, which are denoted by $A_{5,3}$ and $A_{5,20}^0$ respectively. Also, there is a solvable but not nilpotent geometry $\R^3\rtimes\R^2$ denoted by $A_{5,33}^{-1,-1}$. The last irreducible solvable-type geometry is $Nil^5$. The rest of those geometries are products of lower dimensional geometries, namely $\R^5$, $Nil^3\times\R^2$, $Nil^4\times\R$, $Sol_0^4\times\R$, $Sol_1^4 \times\R$, and $Sol_{m,n}^4\times\R$ (here $Sol_{m,m}^4\times\R=Sol^3\times\R^2$). If a manifold $M$ is modeled on any of the above geometries, then $\kappa^g(M)=0$ by (A2).

\subsubsection*{(A3).}
If a manifold $M$ carries one of the irreducible symmetric geometries of non-compact type $\mathbb{H}^5$ or $SL(3,\R)/SO(3)$, then $\kappa^g(M)=\frac{5}{2}$.

\subsubsection*{(A4).}
For the remaining geometries, we obtain a variety of values. First,
suppose $M$ is a manifold which is modeled on one among the following sixteen geometries:
         $S^2\times S^2\times\R$, $S^2\times\R^3, S^2\times Nil^3, S^2\times Sol^3,
         S^2\times\mathbb H^2\times\R, S^2\times\widetilde{SL_2},  S^2\times\mathbb H^3, S^2\times\mathbb H^3,
         S^3\times\R^2, S^3\times\mathbb H^2$, $S^4\times\R,  \CP^2\times\R,
          Nil^3\times_\R S^3, \widetilde{SL_2}\times_\alpha S^3, L(a,1)\times_{S^1}L(b,1)$, or $T^1(\mathbb H^3)$. Then $M$ has a fiber or base which is one of the compact geometries $S^2$, $S^3$, $S^4$ or $\CP^2$. Thus, $\kappa^g(M)=-\infty$ by the classification of Kodaira dimensions of manifolds of dimension $\leq4$.

Next, suppose $M$ is modeled on one of the geometries 
         $\R^3\times\mathbb H^2, Nil^3\times\mathbb H^2, Sol^3\times\mathbb H^2$, 
        $\widetilde{SL_2}\times\R^2, \R^2\rtimes\widetilde{SL_2}$, or $Nil^3\times_\R\widetilde{SL_2}$. Then $M$ fits into a fibration,  where the involved geometries are $\mathbb H^2$ and some solvable-type geometry. Therefore, $\kappa^g(M)=1$.

If $M$ carries the geometry $\mathbb H^3\times\R^2$, then $\kappa^g(M)=\frac{3}{2}$, where the supremum is achieved by the geometries $\mathbb H^3$ and $\R^2$ (compare to Remark \ref{r:variousfibrations}).

Next, suppose that $M$ carries one of the geometries 
         $\mathbb H^2\times\widetilde{SL_2}, \mathbb H^2\times\mathbb H^2\times\R, \widetilde{SL_2}\times_\alpha\widetilde{SL_2}$,
        $\mathbb H^4\times\R, \mathbb H^2(\mathbb C)\times\R$, or $\widetilde{U(2,1)/U(2)}$.
Each of these geometries fibers over one of the $4$-dimensional geometries $\mathbb H^2\times\mathbb H^2$, $\mathbb H^2$ or $\mathbb H^2(\mathbb C)$, which have Kodaira dimension two; see Table \ref{eq.dim4}.
We conclude that $\kappa^g(M)=2$.

Finally, if a manifold $M$ carries the geometry $\mathbb H^2\times\mathbb H^3$, then $\kappa^g(M)=1+\frac{3}{2}=\frac{5}{2}$.

We summarise the above in Table \ref{eq.dim5}.

\begin{table}[!ht]
\begin{tabular}{c|c}
$\kappa^g$ & Geometry\\
\hline
         $-\infty$   & $SU(3)/SO(3)$, $S^5$, $S^2\times \mathbb{X}^3$, $S^3\times \mathbb{X}^2$, $S^4\times\R$, $\CP^2\times\R$,\\
                         & $Nil^3\times_\R S^3$, $\widetilde{SL_2}\times_\alpha S^3$, $L(a,1)\times_{S^1}L(b,1)$, $T^1(\mathbb H^3)$\\
         $0$    & $\R^5$, $\R^4\rtimes\R$, $\R^3\rtimes\R^2$, $Nil^5$, $Nil^4\rtimes\R$, $(\R\times Nil^3)\rtimes\R$,\\ 
                   & $Nil^4\times\R$, $Nil^3\times\R^2$, $Sol_0^4\times\R$, $Sol_1^4\times\R$, $Sol_{m,n}^4\times\R$\\
       $1$      & $\mathbb{H}^2\times\R^3$, $\mathbb{H}^2\times\Nil^3$, $\mathbb{H}^2\times Sol^3$, $\R^2\times\widetilde{SL_2}$,  $\R^2\rtimes\widetilde{SL_2}$, $Nil^3\times_\R\widetilde{SL_2}$\\
$\frac{3}{2}$ & $\mathbb{H}^3\times\R^2$\\     
$2$ &  $\mathbb{H}^2\times\widetilde{SL_2}$, $\widetilde{SL_2}\times_\alpha\widetilde{SL_2}$, $\mathbb{H}^2\times \mathbb{H}^2\times\R$, $\mathbb{H}^4\times\R$, $\mathbb{H}^2(\mathbb C)\times\R$, $\widetilde{U(2,1)/U(2)}$\\      
$\frac{5}{2}$ & $\mathbb{H}^5$, $SL(3,\R)/SO(3)$, $\mathbb H^3\times\mathbb H^2$.\\  
\end{tabular}
\vspace{9pt}
\caption{{\small The Kodaira dimension for geometric 5-manifolds}}\label{eq.dim5}
\end{table}

\subsection{Monotonicity of the Kodaira dimension}\label{monotonicityKodaira}

One of the main motivations in~\cite{Zhang} was to study whether the geometric Kodaira dimension is monotone\index{invariant!monotone} with respect to the domination relation\index{domination} (see Definition \ref{d:monotone}, where $-\infty$ is allowed as well). To this end, Zhang defines the Kodaira dimension for all $3$-manifolds according to Thurston's picture: Let $M$ be a $3$-manifold. If it carries a Thurston geometry, then its Kodaira dimension is given by Table \ref{eq.dim3}. We call each of the values $-\infty,0,1,\frac{3}{2}$ the category to which a geometric $3$-manifold belongs. If $M$ does not carry any of the eight Thurston geometries, then  consider first its Kneser-Milnor prime decomposition (which is trivial when $M$ is prime) and then a toroidal decomposition for each prime summand of $M$, so that each piece of the toroidal decomposition carries one of the eight geometries with finite volume. We call this a $T$-decomposition of $M$. The Kodaira dimension of $M$ is then given in Table \ref{t:3dKodaira}.

\begin{table}[!ht]
\begin{tabular}{c|c}
$\kappa^g$ & For any $T$-decomposition of $M$...\\
\hline
         $-\infty$   &  each piece belongs to the category $-\infty$\\
         $0$  &   there is at least one piece in the category $0$, 
          but no pieces in the category $1$ or $\frac{3}{2}$ \\
$1$ &  there is at least one piece in the category $1$, 
 but no pieces in the category $\frac{3}{2}$\\     
$\frac{3}{2}$ &  there is at least one piece in the category $\frac{3}{2}$ (hyperbolic piece)\\          
\end{tabular}
\vspace{9pt}
\caption{{\small The Kodaira dimension for 3-manifolds}}\label{t:3dKodaira}
\end{table}

The main result of~\cite{Zhang} is the following:

\begin{thm}\label{t:monotonedim3}
Let $M,N$ be $3$-manifolds. If $M\geq N$, then $\kappa^g(M)\geq\kappa^g(N)$.
\end{thm}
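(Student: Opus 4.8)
The plan is to reduce the statement to three monotone ``threshold'' conditions, one for each possible jump of the value of $\kappa^g$, and to verify each separately using the machinery already available. Since, by Table \ref{t:3dKodaira}, the Kodaira dimension of a $3$-manifold is the maximum of the categories of the pieces of any $T$-decomposition, it suffices to show, for a domination $M \geq N$, that $M$ inherits the highest category occurring in $N$; equivalently, writing $c := \kappa^g(N)$, that $\kappa^g(M) \geq c$. I would organise the argument by the value $c \in \{-\infty, 0, 1, \frac{3}{2}\}$, the case $c = -\infty$ being vacuous.

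First the two extreme thresholds, which admit clean characterisations by monotone invariants valid for \emph{all} $3$-manifolds. For $c = \frac{3}{2}$, the presence of a hyperbolic piece is equivalent to $\|N\| > 0$ (Gromov--Thurston, together with the additivity of the simplicial volume under connected sum and JSJ gluing); then Lemma \ref{l:monotonesimplicial} gives $\|M\| > 0$, forcing a hyperbolic piece in $M$ and hence $\kappa^g(M) = \frac{3}{2}$. For the bottom threshold $c = 0$, a category-$0$ target is rationally essential, and a non-zero degree map from $M$ onto a rationally essential manifold forces $M$ to be rationally essential as well; thus $\kappa^g(M) \neq -\infty$, i.e. $\kappa^g(M) \geq 0$. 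Lemma \ref{l:cover} ensures throughout that the finite covers implicit in the geometric classification do not disturb these values.

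The remaining and genuinely delicate threshold is $c = 1$: here $N$ contains a Seifert piece over a hyperbolic base orbifold (geometry $\mathbb{H}^2 \times \R$ or $\widetilde{SL_2}$) but no hyperbolic piece, so $\|N\| = 0$, and I must still show $\kappa^g(M) \geq 1$. Suppose not, so $\kappa^g(M) \leq 0$; then no $T$-piece of $M$ is Seifert-with-hyperbolic-base or hyperbolic, which means every prime summand of $M$ is either a closed $\R^3$-, $Nil^3$- or $Sol^3$-manifold or is rationally inessential. The plan is to reduce to a single prime, aspherical summand of $M$ (using that the target is aspherical, so that $f$ is controlled up to homotopy by $\pi_1(f)$ and the connected-sum spheres can be collapsed), and then to invoke Wang's ordering, Theorem \ref{t:order3-mfds} and Figure \ref{f:order3-mfds}: there is no oriented path from any of the category-$0$ classes $\R^3$, $Nil^3$, $Sol^3$ to any of the category-$1$ classes $\mathbb{H}^2\times\R$, $\widetilde{SL_2}$ or $\mathrm{(GRAPH)}$, so no category-$0$ manifold can dominate a manifold possessing a category-$1$ piece. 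This contradicts $M \geq N$ and completes the threshold.

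I expect this last threshold to be the main obstacle. The point is that the distinction between category $0$ and category $1$ is invisible to the coarse tools used elsewhere: a $Nil^3$ or $Sol^3$ manifold and an $\mathbb{H}^2\times\R$ or $\widetilde{SL_2}$ manifold may share the same Betti numbers (so Lemma \ref{l:Betti} is silent) and both have vanishing simplicial volume (so Lemma \ref{l:monotonesimplicial} is silent). Separating them requires the full non-existence half of Wang's ordering. Two further points of bookkeeping that I would need to nail down are: that the non-geometric classes are constant for $\kappa^g$ --- a non-trivial graph manifold always carries a Seifert piece over a hyperbolic base but never a hyperbolic piece (category exactly $1$), while a non-graph manifold always contains a hyperbolic piece (category $\frac{3}{2}$) --- and the reduction of a reducible domain to one of its prime summands, which is where the asphericity of the target is essential.
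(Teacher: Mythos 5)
Your proposal is correct, but a direct comparison with ``the paper's own proof'' is slightly moot here: the survey does not prove Theorem \ref{t:monotonedim3} at all. It states it as the main result of~\cite{Zhang} and then, in the remark immediately following, observes that it is also a consequence of Theorem \ref{t:order3-mfds}. What you have written is essentially a careful implementation of that remark, and it is the right way to carry it out. Your division into thresholds is sound: the top threshold $c=\tfrac{3}{2}$ via positivity of the simplicial volume (Gromov--Thurston plus additivity under connected sum and JSJ gluing) together with Lemma \ref{l:monotonesimplicial}; the bottom threshold $c=0$ via monotonicity of rational essentiality under domination; and the middle threshold $c=1$ via the non-existence half of Wang's ordering, which, as you correctly diagnose, is the only available tool since Betti numbers and simplicial volume cannot separate the solvable geometries from $\mathbb{H}^2\times\R$, $\widetilde{SL_2}$ and $\mathrm{(GRAPH)}$. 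The one step you should spell out rather than defer to bookkeeping is the reduction to the case where the target is prime and aspherical: if $N$ is a non-trivial connected sum, first compose with the degree-one pinch onto the prime summand $N_0$ containing a piece of maximal category (so $M\geq N\geq_1 N_0$ and $\kappa^g(N_0)=\kappa^g(N)$ when $c\geq 0$, since that summand is irreducible with infinite fundamental group, hence aspherical); only then does the splitting $\deg(f)=\sum_i\deg(f_i)$ over the prime summands of $M$, which uses asphericity of the target, hand you a single rationally essential summand of $M$ to feed into Figure \ref{f:order3-mfds}. With that made explicit, your argument is complete and consistent with the route the paper indicates; your observation that the extreme thresholds admit proofs by monotone invariants alone, independent of Wang's ordering, is a genuine (if modest) simplification over quoting Theorem \ref{t:order3-mfds} wholesale.
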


\begin{rem}
Note that Theorem \ref{t:monotonedim3} is also a consequence of Theorem \ref{t:order3-mfds}, which is sharper in the sense that it tells us the existence or not of maps between manifolds modeled on  different geometries with the same Kodaira dimension, while Theorem \ref{t:monotonedim3} does not.
\end{rem}

Subsequently, Zhang asked whether the monotonicity result for the Kodaira dimension in Theorem \ref{t:monotonedim3} could be extended in higher dimensions. For geometric $4$-manifolds, this is a consequence of the ordering given in Theorem \ref{t:order4}:

\begin{thm}\label{t:monotonedim4}
Let $M,N$ be geometric $4$-manifolds. If $M\geq N$, then $\kappa^g(M)\geq\kappa^g(N)$.
\end{thm}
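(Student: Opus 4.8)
The plan is to deduce the monotonicity entirely from the ordering of Theorem~\ref{t:order4} and the tabulated values of Table~\ref{eq.dim4}, treating separately the two families of geometries that Theorem~\ref{t:order4} deliberately excludes, namely the non-aspherical geometries and the two hyperbolic geometries $\mathbb{H}^4$ and $\mathbb{H}^2(\mathbb{C})$. Throughout I use that $-\infty$ is the minimal value, that $2=\frac{n}{2}$ is the maximal value for $n=4$, and the functoriality of the simplicial volume (Lemma~\ref{l:monotonesimplicial}).

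First I would dispose of the trivial case: if $\kappa^g(N)=-\infty$ the inequality $\kappa^g(M)\geq\kappa^g(N)$ is automatic, so assume $\kappa^g(N)\geq0$. Reading Table~\ref{eq.dim4}, a geometric $4$-manifold has $\kappa^g\geq0$ precisely when it is aspherical: the value $-\infty$ is attained exactly by the compact geometries and by the mixed geometries carrying an $S^2$ or $S^3$ factor, all of which are non-aspherical. Hence $N$ is aspherical. I then claim $M$ is aspherical as well. For each non-aspherical geometric $4$-manifold the fundamental group is finite, virtually cyclic, virtually $\Z^2$, or virtually a surface group, so $H_4(\pi_1(M);\Q)=0$; since $N$ is aspherical, any $f\colon M\to N$ of degree $d$ factors up to homotopy through the classifying map $M\to B\pi_1(M)$, so $d\cdot[N]=H_4(f)([M])$ lies in the image of $H_4(\pi_1(M);\Q)=0$, forcing $d=0$. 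Thus a non-aspherical $M$ cannot dominate an aspherical $N$, and both $M$ and $N$ must be aspherical.

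Next I would treat the case that $N$ carries a non-hyperbolic aspherical geometry. If $M$ is hyperbolic, then $\kappa^g(M)=2$ is maximal and there is nothing to prove. If $M$ is non-hyperbolic aspherical, then $M\geq N$ and Theorem~\ref{t:order4} supplies an oriented path from the geometry of $M$ to that of $N$ in Figure~\ref{d:nonhypmaps}. It then suffices to check, edge by edge, that every arrow of Figure~\ref{d:nonhypmaps} runs from a geometry of larger or equal $\kappa^g$ to one of smaller or equal $\kappa^g$, the values being those of Table~\ref{eq.dim4}; transitivity along the path then yields $\kappa^g(M)\geq\kappa^g(N)$. This is a short finite verification (for instance the arrow from the reducible $\mathbb{H}^2\times\mathbb{H}^2$ geometry to $\mathbb{H}^2\times\R^2$ drops the value from $2$ to $1$, and the arrow $\mathbb{H}^3\times\R\to Sol^3\times\R$ drops it from $\frac{3}{2}$ to $0$).

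The genuinely separate case, and the main obstacle, is when $N$ is hyperbolic, so that $\kappa^g(N)=2$; here Theorem~\ref{t:order4} gives no information and the combinatorics of Figure~\ref{d:nonhypmaps} must be replaced by a geometric input. I would argue via the simplicial volume. A real or complex hyperbolic $4$-manifold is hyperbolic, respectively an irreducible locally symmetric space of non-compact type, so $\|N\|>0$; by functoriality (Lemma~\ref{l:monotonesimplicial}) this forces $\|M\|>0$. On the other hand every aspherical geometric $4$-manifold with $\kappa^g<2$ has vanishing simplicial volume, being either of solvable type, hence with amenable fundamental group, or -- after passing to a finite cover -- a product one of whose factors has vanishing simplicial volume (a circle or a torus factor, or an $S^2$, $S^3$, $Sol^3$, or $Nil^3$ factor). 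This contradiction shows $\kappa^g(M)=2=\kappa^g(N)$ and finishes the proof. The only other delicate point is the rational-inessentiality step of the second paragraph, which is what confines both manifolds to the aspherical world where Theorem~\ref{t:order4} and the simplicial volume can be brought to bear.
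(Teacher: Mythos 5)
Your proposal is correct and follows essentially the same route as the paper, which derives Theorem~\ref{t:monotonedim4} directly from the ordering of Theorem~\ref{t:order4} together with the values in Table~\ref{eq.dim4}. Your additional care with the two families that Theorem~\ref{t:order4} explicitly excludes --- ruling out non-aspherical domains via rational inessentiality, and handling $\mathbb{H}^4$ and $\mathbb{H}^2(\mathbb{C})$ targets via positivity of the simplicial volume --- correctly fills in the cases the paper leaves implicit in its one-line deduction.
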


In my recent paper with Zhang~\cite{NZ}, we showed that $\kappa^g$ is monotone for geometric $5$-manifolds:

\begin{thm}\label{t:monotonedim5}
Let $M,N$ be geometric $5$-manifolds. If $M\geq N$, then $\kappa^g(M)\geq\kappa^g(N)$.
\end{thm}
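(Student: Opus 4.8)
The plan is to treat $\kappa^g$ as a function on the finitely many $5$-dimensional geometries and to prove monotonicity by ruling out, for every pair of values $v_M<v_N$, a dominant map $M\to N$ with $\kappa^g(M)=v_M$, $\kappa^g(N)=v_N$. By Lemma \ref{l:cover} the value $\kappa^g$ is a commensurability invariant, so after passing to finite covers I may assume $M$ and $N$ are presented in the standard virtual product/bundle form of their geometry recorded in Table \ref{eq.dim5}. The case $\kappa^g(N)=-\infty$ is vacuous, so the first genuine step is to eliminate $\kappa^g(M)=-\infty$ against any $\kappa^g(N)\geq 0$. The geometric $5$-manifolds with $\kappa^g=-\infty$ are exactly those carrying a sphere factor or fiber, hence rationally inessential, whereas all manifolds with $\kappa^g\geq 0$ are aspherical, hence rationally essential. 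Since naturality of the classifying map gives, for $f\colon M\to N$ of degree $d\neq 0$, the identity $(c_N)_*(f_*[M])=d\,(c_N)_*[N]$ together with a factorization of the left-hand side through $(c_M)_*[M]$, a rationally essential target forces the domain to be rationally essential; thus $\kappa^g(M)=-\infty<\kappa^g(N)$ cannot occur, and from here on both $M$ and $N$ are aspherical with $\kappa^g\in\{0,1,\tfrac32,2,\tfrac52\}$.

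Next I would dispose of the two extreme aspherical values by soft invariants. For the top value I claim that, among geometric $5$-manifolds, $\|M\|>0$ if and only if $\kappa^g(M)=\tfrac52$: the geometries $\mathbb{H}^5$, $SL(3,\R)/SO(3)$ and $\mathbb{H}^3\times\mathbb{H}^2$ have positive simplicial volume (by Gromov in the hyperbolic case, by \cite{LS,Bucher} for the irreducible locally symmetric space of non-compact type, and by \eqref{eq:simplicialproducts} for $\mathbb{H}^3\times\mathbb{H}^2$), while every other geometry carries an amenable ($S^1$, $T^2$, nilpotent or solvable) or spherical factor or fiber and hence has vanishing simplicial volume by \cite{Gromov}. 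Functoriality (Lemma \ref{l:monotonesimplicial}) then settles every case with $\kappa^g(N)=\tfrac52$. For the bottom aspherical value I would use amenability: the $\kappa^g=0$ geometries are precisely the solvable ones, with virtually polycyclic, hence amenable, fundamental group, whereas every geometry with $\kappa^g\geq 1$ contains a hyperbolic surface subgroup or an irreducible lattice and so has non-amenable fundamental group. By Lemma \ref{l:degreegroups}, $M\geq N$ makes $\pi_1(f)(\pi_1(M))$ a finite-index subgroup of $\pi_1(N)$; if $\pi_1(M)$ is amenable this image is amenable and therefore so is $\pi_1(N)$, a contradiction. This rules out $\kappa^g(M)=0<\kappa^g(N)$.

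After these reductions the only comparisons left are the three \emph{middle} cases, $\kappa^g(M)=1$ against $\kappa^g(N)\in\{\tfrac32,2\}$ and $\kappa^g(M)=\tfrac32$ against $\kappa^g(N)=2$; here both manifolds are aspherical with non-amenable fundamental group and vanishing simplicial volume, so neither soft invariant applies, and this is the main obstacle. The guiding principle is that for aspherical geometric $5$-manifolds one has $\kappa^g=\tfrac12(5-s)$, where $s$ is the dimension of the maximal amenable (solvable) normal part, so monotonicity amounts to showing that domination cannot enlarge $s$, i.e. cannot destroy hyperbolic dimensions. To detect this I would exploit the fibered structure: each remaining target $N$ is, up to finite cover, an $S^1$- or torus-bundle (possibly trivial) over a lower-dimensional base $B$ of positive simplicial volume --- a hyperbolic $3$-manifold for $\mathbb{H}^3\times\R^2$, and a hyperbolic, complex hyperbolic, or product-of-surfaces $4$-manifold for the $\kappa^g=2$ geometries. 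For the virtual products $\mathbb{H}^3\times\R^2$, $\mathbb{H}^4\times\R$, $\mathbb{H}^2(\mathbb{C})\times\R$ and $\mathbb{H}^2\times\mathbb{H}^2\times\R$ I would invoke the product non-domination stability of Theorem \ref{t:mapsbetweenproducts} and Proposition \ref{c:productslower}, together with the Kotschick--L\"oh criterion (Definition \ref{d:PP}, \cite{KL}) that such a base $B$ is not dominated by products, to force any dominating $M$ to contain a factor dominating $B$; a manifold with $\kappa^g(M)\leq\tfrac32$ has hyperbolic part of dimension at most three, and at most two at level $1$, so no such factor exists, the residual low-dimensional non-dominations being settled by the orderings of Theorems \ref{t:order3-mfds} and \ref{t:order4}.

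For the genuinely twisted targets $\widetilde{U(2,1)/U(2)}$, $\mathbb{H}^2\times\widetilde{SL_2}$ and $\widetilde{SL_2}\times_\alpha\widetilde{SL_2}$, which are not even virtual products, I would instead apply the bundle factorization Lemma \ref{l:factorizationbundles} to push the comparison down to a non-domination between the $4$-dimensional bases, again controlled by the simplicial volume and by Theorem \ref{t:order4}; and wherever an $\mathbb{H}^4$ or $\mathbb{H}^2(\mathbb{C})$ factor occurs, the rigidity input \cite[Theorem IX.6.14]{Margulis}, used exactly as in the irreducible $\mathbb{H}^2\times\mathbb{H}^2$ argument (Proposition \ref{p:irreducible}), excludes the factorizations that a smaller value of $\kappa^g$ would require. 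I expect the delicate point to be precisely these twisted level-$2$ geometries: there is no single functorial seminorm separating them from the level-$1$ and level-$\tfrac32$ manifolds, so one must combine the bundle factorization with rigidity and with the lower-dimensional orderings, checking the finitely many remaining pairs by hand.
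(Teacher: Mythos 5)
Your reductions for the easy values essentially reproduce the paper's: $\kappa^g(N)=\tfrac52$ via the simplicial volume, $\kappa^g(M)=-\infty$ via rational (in)essentiality, and $\kappa^g(M)=0$ via amenability, which is a harmless variant of the paper's solvability argument. The genuine gap sits exactly where the paper says the difficulty is: $\kappa^g(M)=1$ against the twisted $\kappa^g(N)=2$ targets, i.e.\ $N$ modeled on $\widetilde{U(2,1)/U(2)}$, on $\widetilde{SL_2}\times_\alpha\widetilde{SL_2}$, or a non-trivial circle bundle over an irreducible $\mathbb{H}^2\times\mathbb{H}^2$ manifold. You propose to handle these with the bundle factorization Lemma \ref{l:factorizationbundles}, but that lemma requires \emph{both} the domain and the target to be $S^1$-bundles over aspherical manifolds whose fundamental groups have an infinite cyclic center persisting in finite covers. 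The problematic domains here are virtual products such as $Sol^3\times\mathbb{H}^2$ or $\R^3\times\mathbb{H}^2$ manifolds, whose fundamental groups are (virtually) of the form $\pi_1(\text{$3$-manifold})\times\pi_1(\Sigma_h)$ and have trivial or the wrong center, so the lemma does not apply and there is no reduction to a $4$-dimensional non-domination. The paper closes this case by a mechanism absent from your proposal: the IIPP property, via Theorem \ref{t:IIPPcriterion} (a central extension over a group not presentable by products is IIPP if and only if it is reducible) and Theorem \ref{t:IIPPdomination} (if $\pi_1(N)$ is not IIPP with persistent infinite cyclic center, then no non-trivial product dominates $N$). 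Worse, $\pi_1$ of an $\widetilde{SL_2}\times_\alpha\widetilde{SL_2}$ manifold \emph{is} irreducible and IIPP, so even Theorem \ref{t:IIPPdomination} fails there and a separate argument from \cite[Section 8]{NeoIIPP} is required --- a subtlety your scheme cannot detect.

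A secondary problem is that the product-stability results you cite do not have the right hypotheses for the remaining middle comparisons. Theorem \ref{t:mapsbetweenproducts} needs the \emph{same} factor $W$ on both sides, and Proposition \ref{c:productslower} needs $m,k<n<m+k$, which fails for instance for a domain $M_1\times\Sigma_h$ (dimensions $3+2$) against a target virtually $F\times T^2$ with $F$ a hyperbolic $3$-manifold (one would need $3<3$). So the step ``force any dominating $M$ to contain a factor dominating $B$'' is not justified by the quoted results; the paper relies on the $5$-dimensional analogues worked out in \cite{NZ}. Your guiding formula $\kappa^g=\tfrac12(5-s)$ is a reasonable organizing heuristic, but it is not itself an argument that domination cannot increase the solvable part, which is precisely what has to be proved.
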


We will only summarise the basic steps of the proof, pointing out some techniques and phenomena, and refer to~\cite{NZ} for the details.

\begin{proof}[Sketch of proof]
We need to show that if $\kappa^g(M)<\kappa^g(N)$, then $M\ngeq N$. Hence, we need to examine the various cases according to Table \ref{eq.dim5}. First, we observe that if $\kappa^g(N)=5/2$, then $\|N\|>0$, whereas $\|M\|=0$ whenever $\kappa^g(M)\neq 5/2$, and thus $M\ngeq N$; these use Gromov's results~\cite{Gromov}, the approximations given by \eqref{eq:simplicialproducts}, as well as a result by Bucher~\cite{Bucher} for the geometry $SL(3,\R)/SO(3)$. We are now left to examine the cases 
\[
\kappa^g(M)\in\{-\infty,0,1,3/2\} \ \text{and} \ \kappa^g(N)\neq\frac{5}{2}.
\] 

If $\kappa^g(M)=-\infty$, then $M$ is rationally inessential and thus it cannot dominate rationally essential manifolds. But if $\kappa^g(N)\neq-\infty$, then $N$ is aspherical, in particular rationally essential. Hence $M\ngeq N$.

If $\kappa^g(M)=0$, then $M$ is modeled on a solvable-type geometry, whereas if $\kappa^g(N)>0$, then $\pi_1(N)$ is not solvable. Hence, the non-domination $M\ngeq N$ follows by the fact that if $\varphi\colon H_1\longrightarrow H_2$ is a group homomorphism with $H_1$ solvable, then the image $\varphi(H_1)\subseteq H_2$ is solvable. 

Suppose now that $\kappa^g(M)=1$. This is the most delicate case and requires a step-by-step examination of many geometries individually.  Among the most interesting cases occur when $M$ is a $Sol^3\times\mathbb H^2$ manifold and $N$ is finitely covered by a non-trivial circle bundle over a hyperbolic or an $\mathbb H^2\times\mathbb H^2$ manifold, because this reveals some new group theoretic phenomena. Recall (cf. Definition \ref{d:PP}) that a group $G$ is called  presentable by products if there exist two infinite elementwise commuting subgroups $G_1,G_2\subseteq G$, so that the image of the multiplication homomorphism 
\[
G_1\times G_2\longrightarrow G
\]
 has finite index in $G$. If in addition both $G_i$ can be chosen with 
 \[
 [G:G_i]=\infty,
 \]
  then $G$ is called {\em infinite index presentable by products} or {\em IIPP}\index{group!IIPP}. This notion was defined in~\cite{NeoIIPP} and it is a sharp refinement between {\em reducible} groups\index{group!reducible} (that is, groups that are up to finite-index subgroups direct products of two infinite groups), and groups presentable by products, i.e.,
\[
\{\text{reducible groups}\}\subsetneq\{\text{IIPP groups}\}\subsetneq\{\text{groups presentable by products}\}.
\]
The following result gives a criterion for the equivalence between IIPP and reducible for central extensions:

\begin{thm}\cite[Theorem D]{NeoIIPP}\label{t:IIPPcriterion}
Let $G$ be a group with center $C(G)$ such that the quotient $G/C(G)$ is not presentable by products. Then, $G$ is reducible if and only if it is IIPP.
\end{thm}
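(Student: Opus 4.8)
The plan is to prove the nontrivial implication, that IIPP together with the hypothesis forces reducibility; the reverse implication is immediate and needs no hypothesis on $G/C(G)$. Indeed, if a finite-index subgroup $H\leq G$ splits as $H=A\times B$ with $A,B$ infinite, then $A,B$ are infinite elementwise commuting subgroups with $AB=H$ of finite index, and each has infinite index in $G$ because the complementary factor is infinite; hence $G$ is IIPP. So from now on I assume $G$ is IIPP and aim to produce a direct product decomposition of a finite-index subgroup.

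Fix infinite, elementwise commuting subgroups $G_1,G_2\leq G$ with $[G:G_i]=\infty$ and $[G:G_1G_2]<\infty$. First I would push the decomposition down to $\bar G:=G/C(G)$ via the projection $\pi$. The images $\pi(G_1),\pi(G_2)$ still commute elementwise, and $\pi(G_1)\pi(G_2)=\pi(G_1G_2)$ has finite index in $\bar G$. Since $\bar G$ is \emph{not} presentable by products, applying Definition \ref{d:PP} to the multiplication homomorphism $\pi(G_1)\times\pi(G_2)\to\bar G$ forces one of the images, say $\pi(G_2)$, to be finite. This is the single place where the hypothesis enters, and it is exactly what lets me extract a central factor: $\pi(G_2)$ finite means $[G_2:G_2\cap C(G)]<\infty$, so $Z:=G_2\cap C(G)$ is an \emph{infinite central} subgroup of $G$.

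Next I would replace $G_2$ by $Z$. Because $Z$ is central it commutes with $G_1$, and writing $G_2$ as a finite union of cosets of $Z$ shows that $G_1Z$ has finite index in $G_1G_2$, so $H:=G_1Z$ has finite index in $G$. Centrality of $Z$ also gives $G_1\trianglelefteq H$ with $H/G_1\cong Z/(Z\cap G_1)$ abelian; and since $[G:G_1]=\infty$ while $[G:H]<\infty$, the quotient $H/G_1$ is infinite. Thus $H$ is a near-direct product of the infinite group $G_1$ and the infinite central abelian group $Z$, the only obstruction to an honest direct product being the overlap $A:=G_1\cap Z$.

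The remaining, and most delicate, step is to split off $Z$ as a genuine direct factor by trimming away $A$. Here I would use that the center is finitely generated in the setting at hand (the fundamental groups of the manifolds under consideration), so that $Z$ is a finitely generated abelian group and $Z/A\cong H/G_1$ is infinite. Lifting a basis of the free part of $Z/A$ to elements of $Z$ produces a free abelian subgroup $Z'\leq Z$ with $Z'\cap A=1$ and $[Z:Z'A]<\infty$; since $Z/A$ is infinite, $Z'$ is infinite. Then $G_1Z'$ has finite index in $H$, hence in $G$, and because $Z'$ is central with $Z'\cap G_1=Z'\cap A=1$, this product is internal and direct: $G_1Z'=G_1\times Z'$, a direct product of two infinite groups, so $G$ is reducible. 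The hard part is precisely this last splitting: $A$ need not admit a complement inside $Z$ on the nose, and controlling it requires the finite generation of the central subgroup, which is why isolating an infinite \emph{central} factor — rather than merely a commuting one — is the crux of the argument.
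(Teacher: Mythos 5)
Your strategy is the right one, and for the substantive direction it is essentially the intended argument: reducible $\Rightarrow$ IIPP is formal, and for the converse one pushes the IIPP presentation down to $G/C(G)$, invokes the hypothesis that $G/C(G)$ is not presentable by products to force one image, say $\pi(G_2)$, to be finite, replaces $G_2$ by the infinite central subgroup $Z=G_2\cap C(G)$, and then splits an infinite central direct factor off the finite-index subgroup $H=G_1Z$. Each of your verifications is correct: $G_1Z$ has finite index because $Z$ has finite index in $G_2$; $H/G_1\cong Z/(Z\cap G_1)$ is infinite because $[G:G_1]=\infty$ and $[G:H]<\infty$; and a lift $Z'$ of a basis of the free part of $Z/A$ (with $A=G_1\cap Z$) is infinite, meets $G_1$ trivially, and gives $G_1\times Z'$ of finite index in $G$.

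The one point to be explicit about is the step you yourself flag: the final splitting uses that $Z$ is finitely generated, which is not among the stated hypotheses. This is not a removable convenience. If arbitrary centers are allowed, the statement as quoted actually fails: let $\tilde\Gamma$ be the fundamental group of the unit tangent bundle of a genus-$2$ surface, a central extension $1\to\Z\to\tilde\Gamma\to\pi_1(\Sigma_2)\to1$ with non-zero Euler class, and set $G=(\tilde\Gamma\times\Q)/\Z$, the central product over $\Z\le\Q$. Then $C(G)\cong\Q$, the quotient $G/C(G)\cong\pi_1(\Sigma_2)$ is not presentable by products, and $G$ is IIPP (take $G_1=\tilde\Gamma$ and $G_2=\Q$, both of infinite index). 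Yet $G$ is not reducible: for any finite-index $H=A\times B$ one has $\Q\le H$ by divisibility; projecting to $\pi_1(\Sigma_2)$ and using non-presentability plus torsion-freeness forces, say, $B\le\Q$; since two non-trivial subgroups of $\Q$ always intersect non-trivially, $A\cap\Q=1$ and then $B=\Q$, so $A$ would be a splitting of the extension over a finite-index subgroup of $\pi_1(\Sigma_2)$ — impossible since restriction to finite-index subgroups is injective on $H^2(\cdot\,;\Q)$ and the class is non-zero. So the finite generation of $C(G)$ (or at least of $G_2\cap C(G)$) must be regarded as a genuine standing hypothesis. Since every application in this survey is to fundamental groups of geometric manifolds whose centers are $\Z$ or $\Z^2$, your proof covers all the uses made of the theorem here; just state the finiteness assumption as part of the theorem rather than as an aside.
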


Non-elementary hyperbolic groups is a standard prominent class of groups that are not presentable by products~\cite{KL}. If $N$ is modeled on the geometry $\widetilde{U(2,1)/U(2)}$, then $N$ is finitely covered by a non-trivial $S^1$ bundle over a complex hyperbolic 4-manifold $B$. Since $\pi_1(N)$ is not reducible,  Theorem  \ref{t:IIPPcriterion} implies that $\pi_1(N)$ is not IIPP. Hence, any map from a manifold modeled on the geometry $Sol^3\times\mathbb H^2$ to $N$ has degree zero, by the next theorem:

\begin{thm}\cite[Theorem B]{NeoIIPP}\label{t:IIPPdomination}
Let $N$ be an $S^1$ bundle over an aspherical manifold $B$, so that $\pi_1(N)$ is not IIPP and its center remains infinite cyclic in finite covers. Then $P\ngeq N$, for any non-trivial direct product $P$.
\end{thm}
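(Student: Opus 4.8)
The plan is to argue by contradiction: suppose $P=P_1\times P_2$ is a non-trivial direct product and $f\colon P\to N$ has non-zero degree. The guiding idea is that the product structure on the domain, together with the failure of $\pi_1(N)$ to be IIPP, should force $f$ to ``collapse'' everything transverse to the central $S^1$-fiber, and hence to kill the fundamental class. First I would reduce to the case where $f$ is $\pi_1$-surjective. Both hypotheses on $N$ are inherited by finite covers: the center stays infinite cyclic by assumption, and a finite-index subgroup of a group that is not IIPP is again not IIPP (commuting infinite-index subgroups with finite-index product in the subgroup would be such in the whole group). Since $f$ lifts to the finite cover $\overline N$ of $N$ corresponding to $\im(\pi_1(f))$, which has finite index by Lemma \ref{l:degreegroups}, and the lift still has non-zero degree, I may replace $N$ by $\overline N$ (still aspherical, with infinite-cyclic center $\langle z\rangle$ whose quotient $\pi_1(\overline N)/\langle z\rangle$ is the aspherical base) and assume $\pi_1(f)$ is onto.

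Next I set $G_i:=\pi_1(f)(\pi_1(P_i))$. These subgroups commute elementwise, being images of the two commuting factors of $\pi_1(P_1)\times\pi_1(P_2)$, and together they generate $\pi_1(N)$. If one of them, say $G_2$, is trivial — equivalently finite, since $\pi_1(N)$ is torsion-free — then $\pi_1(f)$, and hence $f$ up to homotopy by asphericity of $N$, factors through the projection $P\to P_1$, whose dimension is less than $n$; therefore $H_n(f)([P])=0$ and $\deg(f)=0$, a contradiction. So both $G_i$ are infinite, and the surjection exhibits $\pi_1(N)$ as presentable by products in the sense of Definition \ref{d:PP}. Because $\pi_1(N)$ is not IIPP, the two factors cannot both be of infinite index; after relabelling, $G_1$ has finite index in $\pi_1(N)$.

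The step I expect to be the main obstacle is locating $G_2$ inside the center. Since $G_2$ centralises the finite-index subgroup $G_1$, it lies in $C:=C_{\pi_1(N)}(G_1)$. Intersecting with $G_1$ gives $C\cap G_1=Z(G_1)$, which is infinite cyclic because $G_1$ is the fundamental group of a finite cover of $N$; as $[C:C\cap G_1]\le[\pi_1(N):G_1]<\infty$, the group $C$ is virtually $\Z$ and, being torsion-free, is itself infinite cyclic and commensurable with the center $\langle z\rangle$, i.e.\ with the image of the $S^1$-fiber. Hence $G_2$, infinite and contained in $C$, is commensurable with $\langle z\rangle$. It is precisely here that the hypothesis that the center remains infinite cyclic \emph{in finite covers} is indispensable: without it one could not control $Z(G_1)$, and $C$ could a priori be large.

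Finally I would turn this into the vanishing of the degree. Since $G_2$ is commensurable with the central fiber, the composite $\pi_1(P_2)\to\pi_1(N)\to\pi_1(B)$ has finite, hence trivial, image, so $\pi\circ f$ (with $\pi\colon N\to B$ the projection) kills $\pi_1(P_2)$ and, by asphericity of $B$, factors up to homotopy as $P\to P_1\xrightarrow{g}B$. Consequently $f$ factors through the total space of the pulled-back bundle $g^*N\to P_1$, of dimension $\dim P_1+1\le n$. When $\dim P_2\ge 2$ this dimension is strictly less than $n$, so $H_n$ vanishes there and $\deg(f)=0$ at once. The borderline case $\dim P_2=1$ needs one extra input: not being IIPP, $\pi_1(N)$ is in particular not reducible, so the bundle $N\to B$ cannot be virtually trivial and its Euler class is non-torsion; the resulting cup-product relations in $H^*(N;\Q)$ (the classes pulled back from the base carrying no $S^1$-factor, since $\pi\circ f$ collapses $\pi_1(P_2)$) force the top Künneth component of $f^*[N]^{\#}$ to vanish. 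Either way $\deg(f)=0$, contradicting the assumption and proving $P\ngeq N$.
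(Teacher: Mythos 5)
Your overall strategy---pass to a $\pi_1$-surjective situation, set $G_i=\pi_1(f)(\pi_1(P_i))$, use ``not IIPP'' to force one image to have finite index, then use the hypothesis on centers of finite covers to trap the other image in the centralizer $C_{\pi_1(N)}(G_1)$, which is infinite cyclic and commensurable with the fiber---is correct, and it is essentially the skeleton of the argument in \cite{NeoIIPP} (this survey only quotes the statement, so the comparison is with the proof in that reference). The reduction to finite covers, the dichotomy on the $G_i$, the centralizer computation (where you correctly locate the role of the ``center remains infinite cyclic in finite covers'' hypothesis), the factorization of $\pi\circ f$ through $P_1$, and the dimension count disposing of the case $\dim P_2\geq 2$ are all sound.

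The gap is the borderline case $\dim P_2=1$, which is not a corner case but the heart of the theorem (it is exactly the case of $\Sigma\times S^1$ mapping to an $\widetilde{SL_2}$-type circle bundle). Your stated mechanism---``the resulting cup-product relations force the top K\"unneth component of $f^*[N]^{\#}$ to vanish''---is not a proof, and it does not obviously work: knowing that $f^*\pi^*H^*(B;\Q)$ lands in $\mathrm{pr}_1^*H^*(P_1;\Q)$ only controls the subring pulled back from $B$, and the fundamental cohomology class of $N$ does not lie in that subring (indeed $\pi^*\omega_B=0$ once $e$ is non-torsion), so these relations by themselves kill no K\"unneth component of $f^*\omega_N$. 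What actually closes this case is a quantitative use of the factorization you already set up: homotope $f$ so that $\pi\circ f=g\circ \mathrm{pr}_1$ on the nose and factor $f=\bar g\circ h$ through the pullback $Q=g^*N\to P_1$, where $h$ is a map \emph{over} $P_1$ and $\deg(f)=\deg(h)\deg(g)$. Then either observe that $h$ restricted to $P_1\times\{pt\}$ is a section of $Q\to P_1$, so $g^*e=0$, while $\deg(g)\neq0$ makes $g^*$ rationally injective and forces $e$ to be torsion (contradicting non-reducibility); or, equivalently, use the homology Gysin sequence of $N\to B$ to see that $[B]\notin\im(\pi_*)$ when $e$ is rationally non-trivial, whereas $\pi_*f_*([P_1]\times[pt])=\deg(g)[B]$, forcing $\deg(g)=0$ and hence $\deg(f)=0$. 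Until one of these steps is supplied, the proof is incomplete exactly where ``not IIPP, hence not reducible, hence $e$ non-torsion'' must be converted into the vanishing of the degree.
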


We remark that the same argument applies when the target $N$ is a non-trivial $S^1$ bundle over a 4-manifold which is modeled on the irreducible $\mathbb H^2\times\mathbb H^2$ geometry. Note, however, that Theorem \ref{t:IIPPcriterion} does not hold anymore if we remove the  condition on the quotient group $G/C(G)$ being not presentable by products. For instance, the fundamental group of a $Nil^5$ manifold $N$ is irreducible and IIPP (see~\cite[Section 8]{NeoIIPP}), and fits into the following central extension
\[
1\longrightarrow\Z\longrightarrow\pi_1(N)\longrightarrow\Z^4\longrightarrow1.
\]
It was shown in~\cite[Section 8]{NeoIIPP} that $N$ does not admit maps of non-zero degree from non-trivial direct products. In a similar vein, one proves that $M\ngeq N$ when $N$ is modeled on the geometry $\widetilde{SL_2}\times_\alpha\widetilde{SL_2}$, since in that case $N$ is (finitely covered by) a non-trivial $S^1$ bundle over the product of two hyperbolic surfaces $\Sigma_{g}\times\Sigma_{h}$, its fundamental group fits into the central extension
\[
1\longrightarrow\Z\longrightarrow\pi_1(N)\longrightarrow\pi_1(\Sigma_{g})\times\pi_1(\Sigma_{h})\longrightarrow1,
\]
and it is moreover irreducible and IIPP.

Finally, if $\kappa^g(M)=\frac{3}{2}$, i.e., $M$ is modeled on $\mathbb H^3\times\R$, then $M$ is (finitely covered by) a product of a hyperbolic 3-manifold $F$ and the 2-torus. Thus, we can assume that the center of $\pi_1(M)$ has rank two. On the other hand, if $N$ has Kodaira dimension two, then it is (finitely covered by) an $S^1$ bundle over a manifold which is modeled on one of the geometries $\mathbb H^4$, $\mathbb H^2(\mathbb C)$ or $\mathbb H^2\times\mathbb H^2$. In particular, the center of $\pi_1(N)$ is infinite cyclic. Then the non-domination $M\ngeq N$ follows by a factorization argument and the asphericity of the involved spaces (cf. Lemma \ref{l:factorizationbundles}).
\end{proof}

\subsection{Kodaira dimension beyond geometries and the simplicial volume}

The notion of geometric Kodaira dimension defined here goes well beyond Thurston's geometries. This has already been explained for $3$-manifolds in the previous paragraph (Table \ref{t:3dKodaira}). Below we give a complete classification for fiber bundles in dimension four:

\begin{thm}\label{t:fiberbundlesdim4}
Let $M$ be a $4$-manifold which is (finitely covered by) a fiber bundle with fiber $F$ and base $B$. Then 
\begin{equation*}\label{eq.surfacebundles}
\kappa^g(M)= \left\{\begin{array}{ll}
        -\infty, & \text{if one of } F,B  \text{ is } S^2  \text{ or finitely covered by } \#_{m}S^2\times S^1;\\
        \ \ 0, & \text{if } F=B=T^2, \text{ or one of } F,B \text{ is a 3-manifold which is not finitely}\\
        & \text{covered by }\#_{m}S^2\times S^1  \text{ and contains no } \mathbb H^2\times\R,\widetilde{SL_2}  \text{ or } \mathbb H^3  \text{ pieces in its}\\
        &  \text{torus or sphere decomposition};\\
         \ \ 1, & \text{if one of } F,B  \text{ is } T^2  \text{ and the other is a hyperbolic surface, or one of } F,B\\
         &  \text{is a }
          \text{3-manifold with at least one } \mathbb H^2\times\R \text{ or } \widetilde{SL_2} \text{ piece and no } \mathbb H^3  \text{ pieces}\\
         &  \text{in its torus or sphere decomposition};\\
          \ \ \frac{3}{2}, & \text{if one of } B,F  \text{ is a 3-manifold with at least one } \mathbb H^3  \text{ piece in its torus or}\\
        &  \text{sphere decomposition};\\
        \ \ 2, & \text{if both } F \text{ and } B \text{ are hyperbolic surfaces}.
        \end{array}\right.
\end{equation*}
\end{thm}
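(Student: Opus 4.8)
By Lemma \ref{l:cover}, $\kappa^g$ is invariant under finite covers, so I may assume $M$ is itself a fiber bundle $F\to M\to B$. The plan is to feed this structure into Axiom (A4), which computes $\kappa^g(M)$ as the supremum of $\kappa^g(F')+\kappa^g(B')$ over all fibrations $F'\to M\to B'$ whose fiber and base are atomic (i.e.\ satisfy one of (A1)--(A3)), and then to read off the five cases from the classification in dimensions $\leq 3$ already recorded in Tables \ref{eq.dim2} and \ref{t:3dKodaira}. Since $\dim F+\dim B=4$, only three dimension splits can occur: a surface bundle over a surface, $(2,2)$, or a $3$-manifold bundle over the circle, $(1,3)$ or $(3,1)$.

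First I would settle the surface-over-surface case. Every surface is atomic: $S^2$ carries a compact geometry (A1), $T^2$ is flat and hence solvable (A2), and $\Sigma_h$ with $h\geq 2$ is hyperbolic, hence irreducible symmetric of non-compact type (A3). Thus (A4) applies verbatim and yields $\kappa^g(M)=\kappa^g(F)+\kappa^g(B)$ with the surface values $-\infty,0,1$ of Table \ref{eq.dim2}; the four admissible sums $-\infty,0,1,2$ correspond, respectively, to a factor $S^2$, to $F=B=T^2$, to one flat and one hyperbolic surface, and to two hyperbolic surfaces. For the $3$-manifold-over-circle case one factor is $S^1$ with $\kappa^g(S^1)=0$, so (A4) reduces $\kappa^g(M)$ to $\kappa^g(N)$ for the $3$-manifold factor $N$, and Table \ref{t:3dKodaira} supplies this value from the pieces of a $T$-decomposition: it is $-\infty$ exactly when $N$ is covered by $\#_m S^2\times S^1$, it is $0$ when the richest piece is $\R^3$, $Nil^3$ or $Sol^3$, it is $1$ when an $\mathbb{H}^2\times\R$ or $\widetilde{SL_2}$ piece occurs but no hyperbolic one, and it is $3/2$ when an $\mathbb{H}^3$ piece occurs. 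These reproduce the remaining rows, the half-integer $3/2$ arising solely from a hyperbolic $3$-dimensional piece.

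The delicate step -- and the one I expect to be the main obstacle -- is that (A4) is a \emph{supremum} over all atomic fibrations of $M$, whereas the table phrases the answer through one chosen fiber and base, and the two can disagree. A manifold modeled on $\mathbb{H}^3\times\R$ is at once a $\Sigma_h$-bundle over $T^2$, contributing only $1$, and a circle bundle over a hyperbolic $3$-manifold, contributing $3/2$, so the correct value is the larger one, $3/2$ (cf.\ Remark \ref{r:variousfibrations}). To pin the supremum down I would establish matching bounds. The lower bound is immediate, since in each row one exhibits a single fibration realising the claimed value. The upper bound -- ruling out any atomic fibration of strictly higher Kodaira dimension -- is where the global type of $M$ must enter. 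Reaching $2$ would force $M$ to fiber as a product of two hyperbolic surfaces, which is impossible once $\pi_1(M)$ has nontrivial centre, as it does whenever a circle or $\R$ factor is present (the group $\pi_1(\Sigma_g\times\Sigma_h)$ being centreless); and reaching $3/2$ would require a hyperbolic $3$-manifold piece, whose presence is a homotopy invariant pinned down, in the geometric case, by the uniqueness of the Thurston geometry (Theorem \ref{t:Wall4Dgeometries}). Granting these obstructions, the supremum equals the maximum over the at most three dimension splits, which is exactly the entry recorded in the theorem.
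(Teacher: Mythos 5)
Your proposal is correct and follows essentially the same route as the paper, whose entire proof is the one-line observation that the claim follows from the axioms of Definition \ref{d:gK} together with the values of $\kappa^g$ in dimensions $\leq 3$; you simply carry out that computation explicitly via the two dimension splits $(2,2)$ and $(1,3)$. You in fact go further than the paper by isolating the genuine subtlety — that (A4) is a supremum over \emph{all} atomic fibrations, so one must rule out a competing fibration of higher value (e.g.\ via the centrelessness of $\pi_1$ of a hyperbolic surface bundle over a hyperbolic surface, or the vanishing of the simplicial volume of $3$-manifold mapping tori as in Theorem \ref{t:fiberbundlesdim4sv}) — a point the paper leaves entirely implicit.
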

\begin{proof}
The claim follows from our axioms in Definition \ref{d:gK} and $\kappa^g$ in lower dimensions.
\end{proof}

The values of the Kodaira dimension for manifolds of dimension $\leq3$ and for geometric $4$-manifolds suggest that top Kodaira dimension is often equivalent to the positivity of the simplicial volume. This is again the case for the $4$-dimensional fibrations of Theorem \ref{t:fiberbundlesdim4}:

\begin{thm}.\label{t:fiberbundlesdim4sv}
Let $M$ be a $4$-manifold which is (finitely covered by) a fiber bundle with fiber $F$ and base $B$. Then $\|M\|>0$ if and only if $\kappa^g(M)=2$.\index{simplicial volume}
\end{thm}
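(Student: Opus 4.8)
The plan is to reduce the statement to the classification in Theorem~\ref{t:fiberbundlesdim4} and then read off the (non-)vanishing of the simplicial volume. Since the simplicial volume scales by the degree under finite coverings (Lemma~\ref{l:monotonesimplicial}), both conditions $\|M\|>0$ and $\kappa^g(M)=2$ are invariant under passage to finite covers, so I may assume that $M$ is itself the fibre bundle $F\to M\to B$. By Theorem~\ref{t:fiberbundlesdim4}, $\kappa^g(M)=2$ holds precisely when both $F$ and $B$ are hyperbolic surfaces; hence it suffices to prove that $\|M\|>0$ if and only if $F$ and $B$ are both surfaces of genus $\geq 2$.

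For the ``if'' direction I would first dispose of the virtually trivial case: when $M$ is finitely covered by a product $\Sigma_g\times\Sigma_h$ of hyperbolic surfaces, positivity is immediate from the lower bound in~\eqref{eq:simplicialproducts} together with $\|\Sigma_g\|,\|\Sigma_h\|>0$. For a general (possibly non-trivial, e.g.\ Atiyah--Kodaira) surface bundle, $M$ is aspherical and carries the extension $1\to\pi_1(\Sigma_g)\to\pi_1(M)\to\pi_1(\Sigma_h)\to1$. Here the idea is to exhibit a bounded cohomology class of top degree that evaluates non-trivially on $[M]$: pull back the bounded fundamental class of the base $\Sigma_h$ under $p\colon M\to\Sigma_h$ and cup it with a bounded \emph{vertical} class built from the hyperbolic area form on the (uniformly hyperbolic) fibres; integration of the vertical class along the fibre returns a non-zero multiple of $[\Sigma_h]$, so the cup product pairs non-trivially with $[M]$ and, being bounded, forces $\|M\|>0$ by Gromov's duality~\cite{Gromov}. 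I expect this step to be the main obstacle, since the positivity of the simplicial volume of \emph{twisted} surface bundles over surfaces is exactly the delicate point; the crux is the boundedness of the vertical class, i.e.\ a Milnor--Wood type control of the fibrewise volume.

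For the ``only if'' direction I would show $\|M\|=0$ whenever $F,B$ are not both hyperbolic surfaces, organising the remaining possibilities of Theorem~\ref{t:fiberbundlesdim4} into three mechanisms. First, if one of $F,B$ occurs as an amenable fibre --- namely $S^1$, $T^2$, or the simply connected $S^2$ --- then $\|M\|=0$ by Gromov's vanishing for bundles with amenable fibre~\cite{Gromov} (this already covers all circle bundles over a three-manifold). Second, if $B=S^2$, then $\pi_1(M)$ is a quotient of the surface group $\pi_1(F)$, so $B\pi_1(M)$ is at most two-dimensional, $M$ is rationally inessential, and again $\|M\|=0$~\cite{Gromov}. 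Third, every remaining case --- a surface bundle over $T^2$, or a three-manifold bundle over $S^1$ --- is, after re-fibering the base $T^2\to S^1$ when necessary, a mapping torus $N^3\rtimes_\phi S^1$ of a self-homeomorphism of a closed three-manifold $N$.

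It therefore remains to prove that $\|N^3\rtimes_\phi S^1\|=0$ for every closed $3$-manifold $N$, which I regard as the second main point and which in particular settles the subtle case $\kappa^g(M)=\frac{3}{2}$, where $N$ carries a hyperbolic piece and $\|N\|>0$. The plan is to use that $\phi$ preserves the sphere and torus decomposition of $N$ up to isotopy, so that $M$ is cut along the mapping tori of the decomposing tori --- three-manifolds with virtually abelian, hence amenable, fundamental group --- into pieces $P\rtimes_{\phi^{k}}S^1$ with $P$ geometric. By Gromov's additivity of the simplicial volume for gluings along amenable boundaries~\cite{Gromov}, it suffices to see that each such piece has vanishing relative simplicial volume. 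On a hyperbolic piece Mostow rigidity forces $\phi^{k}$ to be isotopic to a finite-order isometry, so the piece is virtually a product $P\times S^1$; on a Seifert piece the same conclusion holds up to bounded (Dehn-twist and periodic) monodromy. Since products with $S^1$ have vanishing (relative) simplicial volume, as noted in Remark~\ref{r:othertech}, every contribution is zero and hence $\|M\|=0$, completing the proof.
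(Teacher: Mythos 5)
Your reduction is correct and your case division faithfully reconstructs the logical skeleton of the result, but you should know that the paper's own proof is a two-line citation: it combines Theorem~\ref{t:fiberbundlesdim4} with \cite[Corollary 1.3]{BN}, which states precisely that a $4$-dimensional fiber bundle has positive simplicial volume if and only if fiber and base are both hyperbolic surfaces. Everything you set out to prove beyond the reduction is exactly the content of that reference, and at the two points where the real work lies your argument has genuine gaps. First, the positivity of $\|M\|$ for \emph{twisted} surface bundles over surfaces with hyperbolic fiber and base: you correctly identify this as the main obstacle, but the step you leave open --- the boundedness of a ``vertical'' degree-two class obtained from the fibrewise hyperbolic area --- is the entire difficulty, and no Milnor--Wood-type control is supplied or sketched. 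As written, the ``if'' direction is proved only in the virtually trivial case via \eqref{eq:simplicialproducts}; for the general case you must either close this step or cite the known positivity result invoked in \cite{BN}.

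Second, the vanishing of the simplicial volume of mapping tori of $3$-manifolds is the main theorem of \cite{BN}, not a routine verification, and your sketch of it breaks at the Seifert pieces. Mostow rigidity does make the mapping torus of a hyperbolic JSJ piece virtually a product with $S^1$, but a Seifert piece with Dehn-twist monodromy is \emph{not} virtually a product, so ``products with $S^1$ have vanishing relative simplicial volume'' does not cover it; one needs a different mechanism there (the monodromy preserves the Seifert fibration up to isotopy, so the mapping torus of the piece carries a circle foliation/$F$-structure with amenable leaves, and a relative vanishing theorem for such structures must be applied). The amenable-gluing step and the treatment of non-prime $N$ (where the monodromy only permutes the prime summands) also require more care than ``$\phi$ preserves the decomposition up to isotopy.'' A smaller but real error: when $B=S^2$ the fundamental group $\pi_1(M)$ is a quotient of a surface group, but such a quotient (e.g.\ $\Z^4$) need not have a two-dimensional classifying space, so your rational-inessentiality argument fails as stated; the correct route is that for hyperbolic fiber the bundle over $S^2$ is trivial by Earle--Eells, and for fiber $S^2$ or $T^2$ one uses amenability of the fiber. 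In short, the proposal is a plausible blueprint for reproving \cite[Corollary 1.3]{BN}, but it is not yet a proof; the economical fix is to cite that corollary, as the paper does.
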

\begin{proof}
This is a consequence of Theorem  \ref{t:fiberbundlesdim4} and \cite[Corollary 1.3]{BN}.
\end{proof}

A natural problem stemming from this study is to understand the relationship or compatibility of the geometric Kodaira dimension $\kappa^g$ with existing notions of Kodaira dimension (see also Remark \ref{comparingKodairasol}). Motivated by the above discussion, we give the following result about the holomorphic Kodaira dimension $\kappa^h$ for complex $2n$-manifolds, which verifies  the relationship to the simplicial volume. 

\begin{thm}\cite[Theorem 1.5]{NZ}\label{t:posKod}
\begin{enumerate}
\item If $M$ is a smooth complex projective $n$-fold with non-vanishing simplicial volume, then $\kappa^h(M)\neq n-1$, $n-2$ or $n-3$.\index{Kodaira dimension!holomorphic}
\item If $M$ is a smooth K\"ahler $3$-fold with non-vanishing simplicial volume, then $\kappa^h(M)=3$.
\end{enumerate}
\end{thm}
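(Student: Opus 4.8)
The plan is to prove both statements by contraposition, showing that a positive simplicial volume is incompatible with each intermediate value of the Kodaira dimension by producing, in every such case, a fibration whose fibers force $\|M\|$ to vanish. The organizing principle is Gromov's vanishing theorem: a closed oriented manifold admitting a fibration with positive-dimensional fibers of amenable fundamental group has zero simplicial volume. So the whole proof reduces to manufacturing such a fibration out of the assumption on $\kappa^h$.

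For part (1), first I would assume $\|M\|>0$ and, toward a contradiction, that $\kappa^h(M)=n-k$ with $k\in\{1,2,3\}$. Since $\kappa^h(M)\ge 0$, the Iitaka fibration furnishes a smooth projective birational model $\mu\colon M^*\to M$ (a composition of blow-ups, hence of degree one) together with a morphism $f\colon M^*\to Y$ whose base has dimension $\kappa^h(M)=n-k$ and whose general fiber $F$ is smooth of complex dimension $k$ with $\kappa^h(F)=0$. The next step is to check that $\pi_1(F)$ is amenable: for $k=1$ the fiber is an elliptic curve with $\pi_1(F)\cong\Z^2$; for $k=2$ the Enriques--Kodaira classification of Kodaira-dimension-zero surfaces (abelian, bielliptic, K3, Enriques) yields a virtually abelian, hence amenable, $\pi_1(F)$; for $k=3$ one uses that a smooth threefold of Kodaira dimension zero has virtually abelian fundamental group, via the Beauville--Bogomolov decomposition, which after a finite \'etale cover splits a variety with numerically trivial canonical class into torus and simply connected Calabi--Yau/hyperk\"ahler factors. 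The hard point here is that this last step passes from $\kappa(F)=0$ to numerically trivial canonical bundle through the abundance theorem, and abundance is available only in complex dimension $\le 3$ -- which is exactly what dictates the cut-off at $n-3$.

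With the general fiber amenable, Gromov's vanishing theorem applied to $f$ gives $\|M^*\|=0$. Since $\mu$ has degree one, Lemma \ref{l:monotonesimplicial} yields $\|M^*\|\ge\|M\|>0$, a contradiction; thus $\kappa^h(M)\notin\{n-1,n-2,n-3\}$. For part (2), let $M$ be a K\"ahler $3$-fold with $\|M\|>0$. Rerunning the argument of part (1) in the K\"ahler category -- using the K\"ahler Iitaka fibration together with abundance and Beauville--Bogomolov for K\"ahler threefolds -- excludes $\kappa^h(M)\in\{0,1,2\}=\{n-3,n-2,n-1\}$. It then remains only to rule out $\kappa^h(M)=-\infty$: for a K\"ahler threefold this forces $M$ to be uniruled (by the minimal model program for K\"ahler threefolds, the analogue of Miyaoka--Mori), so its maximal rationally connected fibration has positive-dimensional, rationally connected -- in particular simply connected -- fibers, and the same vanishing theorem gives $\|M\|=0$, again a contradiction. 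Hence $\kappa^h(M)=3$.

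The main obstacle will be the vanishing step itself: the Iitaka and maximal-rationally-connected fibrations are not genuine fiber bundles, so Gromov's vanishing theorem must be applied in a form robust enough to absorb the singular fibers sitting over the discriminant, typically via an amenable-cover argument together with a thickening of a good cover of the base. The conceptual limitation -- and the reason the forbidden range is precisely $\{n-1,n-2,n-3\}$ and no larger -- is the unavailability of abundance, and hence of the Beauville--Bogomolov structure of Kodaira-dimension-zero fibers, beyond complex dimension three.
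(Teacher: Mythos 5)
Your proposal is correct, and its skeleton coincides with the paper's: Iitaka fibration, amenability of the Kodaira-dimension-zero fibers in complex dimension at most three, a Gromov-type vanishing argument, birational invariance of $\|\cdot\|$, and uniruledness to dispose of $\kappa^h=-\infty$ for threefolds. The differences lie in how three steps are implemented. First, where you rederive amenability of $\pi_1(F)$ from the Enriques--Kodaira classification and the Beauville--Bogomolov decomposition via abundance, the paper simply quotes Koll\'ar's theorem that a smooth proper variety of dimension $\leq 3$ with vanishing holomorphic Kodaira dimension has virtually abelian fundamental group; the content is the same, and both correctly locate the cut-off at $n-3$ in the failure of abundance beyond dimension three. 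Second, for the vanishing step you propose the amenable-cover form of Gromov's vanishing theorem with a thickening over the discriminant, whereas the paper (following [NZ, Theorems 4.5 and 4.6]) uses Gromov's \emph{mapping theorem} at the level of $\pi_1$: for an algebraic fiber space the sequence $\pi_1(F)\to\pi_1(X)\to\pi_1(Y)\to 1$ is exact, so the kernel of $\pi_1(\phi)$ is a quotient of the amenable group $\pi_1(F)$, hence amenable, and the induced map on bounded cohomology is an isometric isomorphism, which kills $\|X\|$ since $\dim Y<\dim X$. This cleanly sidesteps the singular fibers that you correctly flag as the main obstacle to your version, so if you pursue your route you must actually carry out the thickening argument rather than cite a bundle-type statement. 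Third, for the K\"ahler case you rerun the argument using the K\"ahler threefold MMP and the K\"ahler analogue of Miyaoka--Mori, while the paper reduces to the projective case via the theorem that a compact K\"ahler threefold is bimeromorphic to a K\"ahler manifold deformation equivalent to a projective one; your route is self-contained but heavier, the paper's is shorter but leans on [CHL] and [Lin]. One small point to watch in your part (1): when $n=3$ the excluded value $n-3=0$ gives an Iitaka base which is a point, so the fibration argument degenerates and you must invoke amenability of $\pi_1(M)$ itself (Koll\'ar again), which the paper treats as a separate case.
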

\begin{proof}
We will summarise the main steps of the proof, giving a uniform treatment for both parts of the theorem, and refer the reader to~\cite[Section 4]{NZ} for the details. 

If $\kappa^h(M) > 0$, then $M$ admits an Iitaka fibration, namely, $M$ is birationally equivalent to a projective manifold $X$ that admits an algebraic fiber space structure $\phi\colon  X\to Y$ over a normal projective variety $Y$, such that the Kodaira dimension of a very general fiber of $\phi$ has Kodaira dimension zero. In dimensions $\leq3$, Koll\'ar~\cite{Kol95} showed that the fundamental group of a smooth proper variety with vanishing holomorphic Kodaira dimension is virtually Abelian (and conjectured that this is true in all dimensions). Using this and Gromov's mapping theorem~\cite{Gromov}, one can show that $\|X\|=0$, whenever $\dim(M)=n-3,n-2$ or $n-1$; see~\cite[Theorems 4.5 and 4.6]{NZ}. But the simplicial volume is a birational invariant~\cite[Lemma 4.1]{NZ}, hence we obtain that $\|M\|=\|X\|=0$ if $\dim(M)=n-3,n-2$ or $n-1$.

If $\kappa^h(M)=0$ and $n\leq3$, then  $\pi_1(M)$ is virtually Abelian as mentioned above, hence $M$ has vanishing simplicial volume.

Finally, we note that $\|M\|>0$ implies that $M$ cannot be  uniruled \cite[Prop. 4.2]{NZ}. Uniruled manifolds satisfy  $\kappa^h=-\infty$. In fact, Mumford conjectured that a smooth projective variety is uniruled if and only if $\kappa^h=-\infty$~\cite{BDPP}, and this is known to be true for complex projective $3$-folds~\cite{Mor}.

The proof for the K\"ahler case follows by the fact that any compact K\"ahler manifold of complex dimension three is bimeromorphic to a K\"ahler manifold which is deformation equivalent to a projective manifold~\cite{CHL,Lin}.
\end{proof}

\section{Anosov diffeomorphisms}

The final section of this survey has its origins in Dynamics and the Anosov-Smale conjecture\index{conjecture!Anosov-Smale}. Our goal is to show that Anosov diffeomorphisms\index{diffeomorphism!Anosov}\index{Anosov diffeomorphism} do not exist on geometric $4$-manifolds which are not finitely covered by the product of two aspherical surfaces. The main reference for this section is~\cite{NeoAnosov-Thurston}.

\subsection{The main result}

We will prove the following:

\begin{thm}\label{t:main}
If $M$ is a $4$-manifold that carries a geometry other than $\R^4$, $\mathbb H^2\times\R^2$ or the reducible $\mathbb H^2\times\mathbb H^2$ geometry\index{Thurston geometry}\index{manifold!geometric}, then $M$ does not admit transitive Anosov diffeomorphisms\index{Anosov diffeomorphism!transitive}.
\end{thm}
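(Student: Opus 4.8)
The plan is to go through the four-dimensional geometries of Table~\ref{table:4geom} one family at a time, using as black boxes the two constraints already recorded in this survey: the Ruelle--Sullivan eigenvalue class (Theorem~\ref{t:RS}) and Hirsch's root-of-unity obstruction (Theorem~\ref{t:Hirsch4}). In dimension four the codimension of a transitive Anosov diffeomorphism $f$ is $k\in\{1,2\}$. The codimension-one case I would dispose of uniformly: by the classical Franks--Newhouse classification a closed manifold carrying a codimension-one Anosov diffeomorphism is homeomorphic to a torus, and by Theorem~\ref{t:Wall4Dgeometries} no $4$-manifold modelled on a geometry other than $\R^4$ is even homotopy equivalent to $T^4$. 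Hence I may assume $k=2$, i.e.\ $\dim E^s=\dim E^u=2$. After passing to the finite cover on which $E^s$ and $E^u$ are orientable (this preserves the geometry type), Theorem~\ref{t:RS} supplies a class $\alpha\in H^2(M;\R)\setminus\{0\}$ and a real $\lambda>0$, $\lambda\neq1$, with $f^*\alpha=\lambda\alpha$. The whole argument then reduces to showing, geometry by geometry, that such an eigenvalue cannot occur.

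\emph{Compact and sphere-factor geometries} ($S^4,\CP^2,S^2\times S^2$ and $S^2\times\R^2,\,S^2\times\mathbb H^2,\,S^3\times\R$). These are simply connected or rationally inessential, and I would read off $f^*$ on $H^2$ from the cohomology ring. For $S^4$ and $S^3\times\R$ one has $H^2(M;\R)=0$, so Theorem~\ref{t:RS} is contradicted outright. For $\CP^2$, $H^2(M;\R)\cong\R$ and, since $\deg f=\pm1$ forces $f^*$ to act by $\pm1$ on $H^4$, the generator is an eigenvector with eigenvalue $\pm1$. For $S^2\times S^2$ and the $S^2\times\Sigma$ geometries, $H^2$ is spanned by the two pulled-back area classes, each squaring to zero and pairing to the generator of $H^4$; compatibility with $\deg f=\pm1$ then pins down $f^*|_{H^2}$ to have eigenvalues in $\{\pm1,\pm i\}$. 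In each case these are roots of unity, contradicting $\lambda>0,\ \lambda\neq1$.

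\emph{Solvable (non-$\R^4$) and rigid geometries.} For the polycyclic geometries $Sol_{m\neq n}^4,Sol_0^4,Sol_1^4$ and $Sol^3\times\R,\,Nil^3\times\R$, Theorem~\ref{t:Hirsch4} applies. When $b_1(M)=1$ (the mapping-torus cases $Sol_{m\neq n}^4,Sol_0^4,Sol_1^4$) the map $H_1(f)$ is multiplication by $\pm1$, a root of unity, which is forbidden; in the two product cases the infinite-cyclic $\R$-direction of the center maps to a $\Z$-summand of $H_1$ on which $f_*$ acts by $\pm1$, giving the same contradiction. For the rigid geometries $\mathbb H^4,\mathbb H^2(\mathbb C)$, the irreducible $\mathbb H^2\times\mathbb H^2$ geometry, and $\mathbb H^3\times\R$, Mostow--Margulis rigidity makes $\mathrm{Out}$ of the relevant lattice (respectively of $\pi_1(M)$ modulo its central $\Z$) finite, so a power $f^N$ induces an inner automorphism of $\pi_1(M)$ and is therefore homotopic to the identity; then $f^{N*}=\mathrm{id}$ on $H^2$ forces $\lambda^N=1$, again contradicting $\lambda\neq1$.

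\textbf{The main obstacle} is the genuinely nilpotent geometry $Nil^4$ together with $\widetilde{SL_2}\times\R$, where neither Hirsch's theorem ($\pi_1$ is not polycyclic for $\widetilde{SL_2}\times\R$) nor rigidity (the base carries a surface group) is available. For $Nil^4$ I would exploit its filiform structure (Proposition~\ref{p:nil4}): Theorem~\ref{t:Hirsch4} forces the induced $\bar\phi\in GL(2,\Z)$ on $H_1/\mathrm{tors}$ to be hyperbolic, but the Lie bracket identifies $c_1/c_2\cong\Lambda^2(\pi_1^{ab}/\mathrm{tors})$, so $f_*$ acts on the rank-one quotient $c_1/c_2$ by $\det\bar\phi=\pm1$; since an Anosov diffeomorphism of a nilmanifold is carried by a hyperbolic automorphism of the fundamental group \cite{Mal}, which must be hyperbolic on every lower-central-series quotient, this $\pm1$ is a contradiction. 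For $\widetilde{SL_2}\times\R$, which is a nontrivial $T^2$-Seifert bundle over $\Sigma_g$ whose fiber subgroup is the center $\Z^2$, the $\widetilde{SL_2}$-fiber class $z$ is characteristic (it is precisely the set of central elements that are torsion in $H_1$, by Proposition~\ref{p:sol1} and the nontriviality of the bundle), whence $f_*z=z^{\pm1}$ and $f_*$ acts on the center by a triangular integral matrix with diagonal $\pm1$: the entire fiber direction is homologically neutral. Reconciling this neutral compact direction with uniform hyperbolicity is the crux. Since $f$ preserves the Seifert fibration up to isotopy and sends the (uniformly bounded-length) fiber circles among themselves, their common tangent direction can lie in neither $E^s$ nor $E^u$, contradicting the splitting $TM=E^s\oplus E^u$; equivalently, one extends Hirsch's root-of-unity obstruction to the $f_*$-invariant central $\Z^2$ through the factorization of Lemma~\ref{l:factorizationbundles}. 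I expect this $\widetilde{SL_2}\times\R$ step to be the delicate one, and it is exactly this feature---a neutral fiber that cannot be made hyperbolic---that distinguishes the geometries treated here from the three excluded products $\R^4$, $\mathbb H^2\times\R^2$ and the reducible $\mathbb H^2\times\mathbb H^2$, where the corresponding factor genuinely carries a hyperbolic automorphism.
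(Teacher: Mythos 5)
Your case division and most of the individual cases track the paper's proof closely: codimension one via Franks--Newhouse, the compact and sphere-factor geometries via cup products and Ruelle--Sullivan (Theorem~\ref{t:RS}), the polycyclic mapping-torus geometries $Sol^4_{m\neq n}, Sol^4_0, Sol^4_1$ via Hirsch, and the rigid geometries $\mathbb H^4$, $\mathbb H^2(\mathbb C)$ and irreducible $\mathbb H^2\times\mathbb H^2$ via finiteness of $\mathrm{Out}$. (For $S^2\times\mathbb H^2$ you should note that $M$ is a priori only covered by an $S^2$-bundle over a surface, not a product, though a further double cover fixes this.) However, two cases have genuine gaps. First, $\widetilde{SL_2}\times\R$, which you correctly identify as the crux: the argument that ``$f$ preserves the Seifert fibration up to isotopy and sends the fiber circles among themselves, so their tangent direction can lie in neither $E^s$ nor $E^u$'' is not valid --- an isotopy gives you no control over $df$ on the actual fiber direction, and nothing forces the fiber tangent line to be $df$-invariant; moreover Theorem~\ref{t:Hirsch4} is unavailable here since $\pi_1$ contains a surface group and is not virtually polycyclic. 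The paper's mechanism is different: pass to the finite cover $N\times S^1$ (Theorem~\ref{t:hillmanproducts}), use that the fiber of $N$ is torsion in $H_1(N)$ and that $N$ is not dominated by products (Theorem~\ref{t:order3-mfds}) to show that a power of $f^*$ fixes the integral class $1\times\omega_{S^1}\in H^1$, and then apply Hirsch's infinite-cyclic-covering theorem (Theorem~\ref{t:Hirschcov}): the cover corresponding to that class is $N\times\R\simeq N$, which has finite-dimensional rational homology.

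Second, $\mathbb H^3\times\R$: your claim that finiteness of $\mathrm{Out}$ of $\pi_1(M)$ modulo its center forces a power of $f$ to induce an \emph{inner} automorphism of $\pi_1(M)$ is false. For $\Gamma=\pi_1(N)$ with $N$ hyperbolic and $b_1(N)>0$, the group $\mathrm{Out}(\Gamma\times\Z)$ contains the infinite family of shearing automorphisms $(\gamma,n)\mapsto(\gamma,n+\phi(\gamma))$, $\phi\in H^1(N;\Z)$, which are not inner; so $f^{N*}$ need not be the identity on $H^2(M)$ and your eigenvalue contradiction does not follow. Again the paper closes this case with Theorem~\ref{t:Hirschcov} applied to the $S^1$-class, which is fixed because $f_*$ preserves the infinite cyclic center. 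Finally, a smaller point on $Nil^4$: your argument via the lower central series is correct \emph{if} you invoke Manning's theorem that every Anosov diffeomorphism of an infranilmanifold is conjugate to a hyperbolic automorphism (the citation to Malfait does not supply this), which is a much heavier input than anything in the survey; the paper instead reads off directly from the presentation of $\pi_1(M)$ that $H_1(f)(\bar t)=\bar t$ and applies Theorem~\ref{t:Hirsch4}, which only concerns $H_1$.
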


The transitivity assumption in the above theorem is mild and will only be used when $M$ is a product of the 2-sphere with an aspherical surface.

Note that Theorem \ref{t:main} does not exclude (transitive) Anosov diffeomorphisms on geometric 4-manifolds which are finitely covered by a product of surfaces $\Sigma_g\times\Sigma_h$, where $g,h\geq 1$:

\begin{prob}\cite[Section 7.2]{GL}\label{p:GL}
Does the product of two aspherical surfaces at least one of which is hyperbolic admit an Anosov diffeomorphism?
\end{prob}

Recently, D. Zhang~\cite{DZhang} showed how to exclude Anosov diffeomorphisms on certain products of two hyperbolic surfaces.

Finally, as we have done in our study thus far, we will make extended use of properties of finite covers of geometric $4$-manifolds. We thus collect the following lemmas (see~\cite{GL} and~\cite{GH} respectively):

\begin{lem}
\label{l:pre1}
Let $M$ be a manifold and $p\colon\overline M\to M$ be a finite covering. If $f\colon M\to M$ is a diffeomorphism, then there exists some integer $m\geq 0$ such that $f^m$ lifts to a diffeomorphism $\overline{f^m}\colon\overline M\to \overline M$, that is, the following diagram commutes.
$$
\xymatrix{
\overline M\ar[d]_{p} \ar[r]^{\overline{f^m}}&  \ar[d]^{p} \overline M\\
M\ar[r]^{f^m}& M \\
}
$$
\end{lem}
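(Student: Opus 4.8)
The plan is to prove Lemma \ref{l:pre1} by translating the lifting problem into a purely group-theoretic statement about finite-index subgroups and their behavior under an induced automorphism. Let $\Gamma=\pi_1(M)$ and let $H=p_*(\pi_1(\overline M))\leq\Gamma$ be the finite-index subgroup corresponding to the covering $p\colon\overline M\to M$, with index $[\Gamma:H]=k<\infty$. The standard lifting criterion states that a map $g\colon M\to M$ (say base-point preserving after choosing a path, or up to the usual subtleties) lifts to a map $\overline M\to\overline M$ of the covering $p$ if and only if $g_*(H)\subseteq H$, where I am implicitly conjugating by the $\pi_1$-action to account for base points; more precisely the lift of $f^m$ exists provided the induced homomorphism $(f^m)_*=(f_*)^m$ carries $H$ into a conjugate of $H$. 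So the first step is to reduce the geometric statement to finding an $m\geq 0$ with $(f_*)^m(H)\subseteq H$ (up to conjugacy), and since $f$ is a diffeomorphism, $f_*$ is an automorphism of $\Gamma$, which is the key extra structure.

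The heart of the argument is then a finiteness/pigeonhole observation. First I would note that because $f_*\colon\Gamma\to\Gamma$ is an automorphism, $f_*(H)$ is again a subgroup of $\Gamma$ of the same finite index $k$. Iterating, each $(f_*)^j(H)$ is a subgroup of index exactly $k$. The crucial fact is that a finitely generated group has only finitely many subgroups of any fixed finite index $k$ (this holds whenever $\Gamma$ is finitely generated, which $\pi_1$ of a closed manifold always is, since the number of index-$k$ subgroups is bounded in terms of $k$ and the number of generators). Therefore the sequence of subgroups $H, f_*(H), (f_*)^2(H),\dots$ takes only finitely many distinct values, so by the pigeonhole principle there exist integers $0\leq a<b$ with $(f_*)^a(H)=(f_*)^b(H)$. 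Applying the automorphism $(f_*)^{-a}$ to both sides gives $H=(f_*)^{b-a}(H)$, so setting $m=b-a\geq 1$ we obtain $(f_*)^m(H)=H$, which in particular yields the containment $(f^m)_*(H)\subseteq H$ required for the lift to exist. One then invokes the lifting criterion to produce the diffeomorphism $\overline{f^m}\colon\overline M\to\overline M$ covering $f^m$, and smoothness of the lift is automatic since $p$ is a smooth covering and $f^m$ is smooth.

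I would be careful about two subsidiary points. The first is the base-point/conjugacy bookkeeping: the clean statement $(f_*)^m(H)=H$ is what makes the lift exist as an honest covering self-map, but in general one only needs $(f_*)^m(H)$ to lie in the $\Gamma$-conjugacy class of $H$; since there are also only finitely many conjugates, the same pigeonhole argument applies to the finite set of conjugacy-representatives, so this does not affect the conclusion. The second is that the lift $\overline{f^m}$ is a priori only a continuous (smooth) map, and one should check it is a \emph{diffeomorphism}: this follows because $f^m$ is a diffeomorphism, hence a homeomorphism of the base, and a lift of a homeomorphism of the base covering space to a covering space of finite degree is itself a homeomorphism (its inverse lifts $f^{-m}$, using $(f_*)^{-m}(H)=H$ as well), and it is smooth with smooth inverse since $p$ is a local diffeomorphism.

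The main obstacle, and the only genuinely substantive ingredient, is the finiteness of subgroups of a given finite index in a finitely generated group; everything else is formal covering-space theory and a pigeonhole count. I expect this to be the step to state carefully, together with the verification that the produced lift is a diffeomorphism rather than merely a continuous map. The power $m$ is in general unavoidable (one cannot always lift $f$ itself), which is exactly why the statement asserts existence of some $m\geq 0$ rather than $m=1$; indeed if $H$ is a characteristic or $f_*$-invariant subgroup one may take $m=1$, but in general $f_*$ permutes the finite set of index-$k$ subgroups and $m$ is the order of the relevant permutation cycle.
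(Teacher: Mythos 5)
Your proof is correct and is essentially the standard argument: the paper itself states Lemma \ref{l:pre1} without proof, citing \cite{GL}, and the argument there is exactly your pigeonhole count on the finite set of index-$k$ subgroups of the finitely generated group $\pi_1(M)$ (so some power of $f_*$ fixes $p_*\pi_1(\overline M)$ up to the basepoint conjugation), combined with the covering-space lifting criterion. Your treatment of the two subsidiary points --- the conjugacy bookkeeping and the verification that the lift is a diffeomorphism because its inverse is a lift of $f^{-m}$ --- is also correct, so there is nothing to add.
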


\begin{lem}
\label{l:pre2}
If $f\colon M\to M$ is a transitive Anosov diffeomorphism and $\overline f\colon\overline M\to\overline M$ is a lift of $f$ for some cover $\overline M$ of $M$, then $\overline f$ is transitive.
\end{lem}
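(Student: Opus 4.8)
The plan is to reduce the statement to a density property of periodic points and then lift periodic points through the covering. First I would record two preliminary observations. Since $p\colon\overline M\to M$ is a covering, it is a local diffeomorphism, so the $df$-invariant hyperbolic splitting $TM=E^s\oplus E^u$ of Definition \ref{d:Anosov} pulls back under $p$ to a $d\overline f$-invariant splitting of $T\overline M$ satisfying the same contraction and expansion estimates (with the same constants $\mu$ and $C$, computed in any lifted metric); hence $\overline f$ is itself Anosov. Second, I would invoke the classical characterization, via Smale's spectral decomposition, that an Anosov diffeomorphism of a connected closed manifold is transitive if and only if its non-wandering set $\Omega$ is the whole manifold, equivalently if and only if its periodic points are dense. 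This reduces the lemma to showing that $\mathrm{Per}(\overline f)$ is dense in $\overline M$, using as input that $\mathrm{Per}(f)$ is dense in $M$ (which holds because $f$ is transitive Anosov, so $M=\Omega(f)=\overline{\mathrm{Per}(f)}$).

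The heart of the argument is the lifting of periodic points. Let $d=\deg(p)$ and let $x\in M$ be periodic, say $f^k(x)=x$. From $p\circ\overline f=f\circ p$ one gets $p\circ\overline f^{\,k}=f^k\circ p$, so for any $\overline y\in p^{-1}(x)$ we have $p(\overline f^{\,k}(\overline y))=f^k(x)=x$; that is, $\overline f^{\,k}$ maps the finite fiber $p^{-1}(x)$ bijectively to itself. Thus $\overline f^{\,k}$ restricts to a permutation of a set of $d$ points, and therefore $\overline f^{\,k\cdot d!}$ fixes every point of $p^{-1}(x)$. Consequently $p^{-1}(x)\subseteq\mathrm{Per}(\overline f)$, and since $x$ was arbitrary, $p^{-1}(\mathrm{Per}(f))\subseteq\mathrm{Per}(\overline f)$.

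It remains to transfer density. Because $p$ is an open continuous surjection, the preimage of a dense subset of $M$ is dense in $\overline M$: for any nonempty open $\overline U\subseteq\overline M$, the set $p(\overline U)$ is open and nonempty, hence meets the dense set $\mathrm{Per}(f)$, so $\overline U$ meets $p^{-1}(\mathrm{Per}(f))$. Therefore $\mathrm{Per}(\overline f)\supseteq p^{-1}(\mathrm{Per}(f))$ is dense in $\overline M$, and since periodic points are non-wandering we obtain $\Omega(\overline f)=\overline M$. By the characterization recalled above, $\overline f$ is transitive. Here I use that $\overline M$ is connected, which is part of the standing conventions of the paper.

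The only genuinely nontrivial ingredient is the characterization invoked in the first and last steps, namely that an Anosov diffeomorphism with full non-wandering set is transitive; this is where connectedness of $\overline M$ is essential. Indeed, the spectral decomposition writes $\Omega(\overline f)=\overline M$ as a finite disjoint union of closed basic sets, each of which is open for an Anosov system, so connectedness forces a single basic set, on which $\overline f$ is transitive by definition. I would cite this classical fact rather than reprove the openness of the basic sets, which is the delicate point; by contrast, the lifting of periodic points and the density transfer are entirely elementary.
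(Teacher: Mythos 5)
Your argument is correct, and it is essentially the standard one: the paper itself offers no proof of this lemma, citing it from the literature (it is Lemma~2.3 of~\cite{GH}), and the proof given there runs along the same lines you describe --- reduce transitivity of an Anosov diffeomorphism on a connected closed manifold to the statement $\Omega=M$ via Smale's spectral decomposition (each basic set in a finite partition of $M$ into closed invariant pieces is automatically clopen, so connectedness forces a single piece), observe that $\Omega(f)=\overline{\mathrm{Per}(f)}$ by the Anosov closing lemma, and lift periodic points through the covering by the finite-fiber permutation trick. Your density-transfer step and the verification that $\overline f$ is again Anosov are both fine. The one point worth making explicit is that your permutation argument requires the fiber $p^{-1}(x)$ to be finite, i.e.\ the covering to be finite; this is forced by the paper's standing convention that all manifolds (hence $\overline M$) are closed, and it is genuinely necessary --- for an infinite cover the statement fails, as the lift of a hyperbolic toral automorphism to $\R^n$ is a linear hyperbolic map with no dense orbit. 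A one-line remark to that effect would make the proof airtight as written.
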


\subsection{Proof of Theorem \ref{t:main}} We will examine each of the geometries and exploit tools from Algebraic Topology, such as Hirsch and Ruelle-Sullivan classes, as well as coarse geometric properties, such as negative curvature.

\subsubsection{Hyperbolic geometries}\label{s:hyperbolic}
We first deal with the real and complex hyperbolic geometries. The following theorem is well-known to experts, but we will present a proof for two reasons: First, the proof contains some useful facts about Anosov diffeomorphisms which will be used below as well, such as the behaviour of their Lefschetz numbers. Second, the tools used for the proof (e.g., the Gromov norm) reveal the beauty of connections between domains that initially might seem irrelevant to each other. 

\begin{thm}\cite{Yano,GL}\label{t:finiteout}
If $M$ is a negatively curved manifold, then $M$ does not admit Anosov diffeomorphisms. In particular, there are no Anosov diffeomorphisms on manifolds modeled on the geometry $\mathbb{H}^4$ or the geometry $\mathbb{H}^2(\mathbb{C})$.
\end{thm}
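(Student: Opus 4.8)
The plan is to derive a contradiction from two incompatible facts about a hypothetical Anosov diffeomorphism $f\colon M\to M$: that it must act on homology with an eigenvalue off the unit circle, while negative curvature forces the homological action of any self-diffeomorphism to have finite order. First I would record the coarse geometry of the hypothesis. By the Cartan--Hadamard theorem the universal cover of a negatively curved closed $M$ is diffeomorphic to $\R^n$, so $M$ is aspherical with Gromov-hyperbolic fundamental group; being aspherical it is rationally essential, so by Gromov's theorem (cf.\ the examples in Section~\ref{s:preliminary} and~\cite{Gromov}) its simplicial volume satisfies $\|M\|>0$. Positivity of $\|M\|$ already restricts every self-map to degree in $\{-1,0,1\}$ by functoriality (Lemma~\ref{l:monotonesimplicial}); this is the first place the Gromov norm enters, pinning the degree of the diffeomorphism $f$ to $\pm1$, so that $f_*$ acts as $\pm\id$ in degrees $0$ and $n$.

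Next I would extract the dynamical input, namely that an Anosov diffeomorphism acts on rational homology with spectral radius strictly larger than $1$. I would obtain this from the behaviour of Lefschetz numbers: the periodic points of $f$ are nondegenerate and grow exponentially in number, while each fixed point of $f^m$ carries index of constant sign $(-1)^{\dim E^u}$, so $L(f^m)=\sum_i(-1)^i\operatorname{tr}\bigl((f_*)^m|_{H_i(M;\R)}\bigr)$ cannot remain bounded. This forces some eigenvalue $\lambda$ of $f_*$ on $H_i(M;\R)$ with $0<i<n$ to satisfy $|\lambda|>1$. In the transitive case with orientable invariant distributions one may instead quote Ruelle--Sullivan (Theorem~\ref{t:RS}), which produces a class $\alpha\in H^k(M;\R)$ with $H^k(f)(\alpha)=\lambda\alpha$ and $\lambda\neq1$ directly, bypassing the index bookkeeping.

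Finally I would invoke rigidity to close the loop for the two hyperbolic geometries. A manifold modeled on $\mathbb{H}^4$ or $\mathbb{H}^2(\mathbb{C})$ is a closed locally symmetric space of real dimension $\geq 3$, so by Mostow rigidity every automorphism of $\pi_1(M)$ is realised by an isometry and $\operatorname{Out}(\pi_1(M))\cong\operatorname{Isom}(M)$ is finite. Since $M$ is aspherical, $f$ is determined up to homotopy by the outer automorphism it induces, so some power $f^k$ is homotopic to $\id_M$; hence $(f_*)^k=\id$ on $H_*(M;\R)$ and every eigenvalue of $f_*$ is a $k$-th root of unity. This contradicts the eigenvalue $\lambda$ with $|\lambda|>1$ produced above, so no Anosov diffeomorphism can exist on such $M$.

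I expect the main obstacle to be the dynamical step: rigorously guaranteeing a homology eigenvalue of modulus $>1$ requires controlling the signs of the fixed-point indices of $f^m$ so that no cancellation destroys the exponential growth of $|L(f^m)|$, or equivalently invoking the entropy identity $h(f)=\log\rho(f_*)$ for Anosov systems. For the general negatively curved statement in \emph{all} dimensions there is a second subtlety, since finiteness of $\operatorname{Out}(\pi_1(M))$ is precisely what fails for hyperbolic surfaces; there one argues separately that transverse nonsingular stable and unstable line fields force $\chi(M)=0$, which is incompatible with $\chi(\Sigma_g)<0$ for $g\geq 2$, so the two-dimensional case is handled by Gauss--Bonnet rather than by rigidity.
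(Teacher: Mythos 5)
Your argument is essentially the Gogolev--Lafont route that the paper follows for its second (and more general) proof: finite outer automorphism group plus asphericity forces a power of $f$ to act trivially on cohomology, so the Lefschetz numbers of its iterates are bounded, contradicting the exponential growth of periodic points as in \eqref{eq.FixAnosov}. For the two locally symmetric geometries $\mathbb{H}^4$ and $\mathbb{H}^2(\mathbb{C})$ your Mostow-rigidity justification of $\mathrm{Out}(\pi_1(M))$ being finite is complete, and your separate treatment of surfaces via line fields and $\chi$ is fine (the paper instead quotes Franks--Newhouse to dismiss codimension one outright). Two points of comparison. First, for a \emph{general} negatively curved $M$ of dimension $\geq 3$ that is not locally symmetric, Mostow does not apply; finiteness of $\mathrm{Out}(\pi_1(M))$ there is the Paulin/Bestvina--Feighn theorem for one-ended hyperbolic groups with no essential cyclic splitting, which is exactly the input \cite{GL} uses and which you would need to cite to get the full statement of Theorem~\ref{t:finiteout}. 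Second, the paper also records Yano's older proof of the transitive case, where the Gromov norm does genuine work: Ruelle--Sullivan (Theorem~\ref{t:RS}) gives a nonzero class $a\in H_l(M;\R)$ in some degree $l\geq 2$ with $H_l(f)(a)=\lambda a$, $\lambda>1$, whence $\lambda\|a\|_1=\|H_l(f)(a)\|_1\leq\|a\|_1$ forces $\|a\|_1=0$, contradicting the Inoue--Yano theorem \cite{IY} that nonzero homology classes of negatively curved manifolds have positive Gromov norm. Your own use of the Gromov norm (positivity of $\|M\|$ to pin the degree to $\pm1$) is redundant for a diffeomorphism and misses this sharper application; if you want the transitive case without any appeal to $\mathrm{Out}$, that is the argument to make.
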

\begin{proof}
The first proof due to Yano~\cite{Yano} shows that there are no transitive Anosov diffeomorphisms on a negatively curved manifold $M$. Suppose the contrary, and let $f\colon M\to M$ be a transitive Anosov diffeomorphism. Since only tori admit codimension one Anosov diffeomorphisms~\cite{Fr,Newhouse}, we can assume that the dimension of $M$ is at least four and the codimension $k$ of $f$ is at least two. By Theorem \ref{t:RS}, there is a homology class $a\in H_l(M;\R)$ such that $H_l(f)(a)=\lambda\cdot a$ for some $\lambda>1$, where $l=k>1$ or $l=\dim(M)-k>1$. This means that the Gromov norm of $a$ is zero which is impossible because $M$ is negatively curved~\cite{Gromov,IY}.

More generally, Gogolev-Lafont~\cite{GL} showed that there are no Anosov diffeomorphisms on a negatively curved manifold $M$ of dimension $\geq 3$, using the fact that the outer automorphism group $\mathrm{Out}(\pi_1(M))$ is finite. By the latter property and the fact that $M$ is aspherical (because it is negatively curved), we conclude that some power $f^l$ of a hypothetical Anosov diffeomorphism  $f\colon M\to M$ induces the identity on cohomology (which already implies that $f$ cannot be transitive by Theorem~\ref{t:RS} or by~\cite{Shi}).  Thus, the Lefschetz numbers\index{Lefschetz number} $\Lambda$ (that is, the sum of indices of the fixed points) of any power of $f^l$ are uniformly bounded, which contradicts the growth of periodic points of $f^l$, by the equation
\begin{equation}\label{eq.FixAnosov}
|\Lambda(f^{m})|=|\mathrm{Fix}(f^{m})| = re^{mh_{top}(f)} + o(e^{mh_{top}(f)}), \ m\geq 1,
\end{equation}
where $h_{top}(f)$ denotes the topological entropy of $f$ and $r$ is the number of transitive basic sets with entropy equal to $h_{top}(f)$; we refer to~\cite[Lemma 4.1]{GL} for further details.
\end{proof}
 
\subsubsection{Product geometries}\label{s:products} 
We split our study into three cases:
\begin{itemize}
\item[(i)] Product geometries with a compact factor; 
\item[(ii)] Aspherical geometries $\mathbb{X}^3\times\R$;
\item[(iii)] The irreducible geometry $\mathbb{H}^2\times\mathbb{H}^2$.
\end{itemize}

\subsubsection*{Products with a compact factor}  
First, we show the following:

\begin{thm}\label{t:withcomp}
There are no (transitive)\index{Anosov diffeomorphism!transitive} Anosov diffeomorphisms on a $4$-manifold that carries a geometry $S^i\times\mathbb{X}$, for $i=2,3$.
\end{thm}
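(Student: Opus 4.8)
The plan is to treat the two values of $i$ separately, reducing first to the model products. I would invoke the structure of these geometries (Theorem~\ref{t:hillmanproducts} and~\cite{Hillman}): an $S^3\times\R$-manifold is finitely covered by $S^3\times S^1$, while an $S^2\times\R^2$- or $S^2\times\mathbb{H}^2$-manifold is finitely covered by $S^2\times\Sigma$, where $\Sigma=T^2$ or a hyperbolic surface $\Sigma_h$. By the covering Lemmas~\ref{l:pre1} and~\ref{l:pre2} I may pass to such a finite cover and replace $f$ by a lift of a suitable power, which is again a (transitive) Anosov diffeomorphism of a product of the same type; a further double cover, pulled back from the surface factor since $H^1(S^2\times\Sigma;\Z_2)=H^1(\Sigma;\Z_2)$, orients the invariant distributions without leaving the product class. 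I may therefore assume $M=S^3\times S^1$ or $M=S^2\times\Sigma$, with $f$ (transitive) Anosov and $E^s,E^u$ oriented.

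For $M=S^3\times S^1$ I would bypass transitivity and Ruelle--Sullivan entirely and apply Hirsch's Theorem~\ref{t:Hirsch4}. Its hypotheses hold: $\pi_1(M)\cong\Z$ is polycyclic and the universal cover $S^3\times\R$ has finite-dimensional rational homology. On the other hand $H_1(M;\R)\cong\R$, and any diffeomorphism acts on $H_1(M;\Z)\cong\Z$ through $\mathrm{Aut}(\Z)=\{\pm1\}$, so $H_1(f)$ has the root of unity $\pm1$ as an eigenvalue. This contradicts the conclusion of Theorem~\ref{t:Hirsch4}, and hence $S^3\times S^1$ — and therefore every $S^3\times\R$-manifold — carries no Anosov diffeomorphism.

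For $M=S^2\times\Sigma$ I would split according to the codimension $k\in\{1,2\}$ of $f$. Codimension one is impossible because only tori admit codimension-one Anosov diffeomorphisms (Franks--Newhouse~\cite{Fr,Newhouse}), whereas $\pi_2(M)\cong\Z\neq0$ shows $M$ is not a torus. For codimension two I would feed transitivity into Ruelle--Sullivan (Theorem~\ref{t:RS}): there is a nonzero $\alpha\in H^2(M;\R)$ with $f^*\alpha=\lambda\alpha$ for some real $\lambda>0$, $\lambda\neq1$, so $f^*|_{H^2(M;\R)}$ has a positive eigenvalue different from $1$. The decisive point is to exclude this. The sphere factor is homotopically rigid: the Hurewicz map sends $\pi_2(M)\cong\Z$ onto the line spanned by the sphere class $[S^2]\in H_2(M;\Z)$, so naturality forces $f_*[S^2]=\pm[S^2]$. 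In the basis $\{[S^2],[\Sigma]\}$ of $H_2(M;\Z)$ — there is no mixed K\"unneth term, since $H_1(S^2)=0$ — the automorphism $f_*$ is upper triangular with diagonal entries $\pm1$ and $b$; dually $f^*|_{H^2}$ is lower triangular with the same diagonal, and being an integral automorphism it has determinant $\pm1$, whence $b=\pm1$. Thus every eigenvalue of $f^*|_{H^2(M;\R)}$ lies in $\{\pm1\}$, the only positive one being $1$, which contradicts $\lambda\neq1$. This rules out codimension two and finishes the cases $S^2\times\R^2$ and $S^2\times\mathbb{H}^2$.

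The hard part is exactly this codimension-two step. The intersection form on $H^2(S^2\times\Sigma)$ is hyperbolic and, on its own, is preserved by isometries with eigenvalues $\lambda\neq1$, so the cup product cannot supply the contradiction; the essential input must be the $\pi_2$/Hurewicz rigidity of the sphere class, which pins the eigenvalues to $\pm1$. This is also where transitivity is genuinely needed, since $\chi(S^2\times\Sigma)=0$ renders the Lefschetz-number growth argument behind \eqref{eq.FixAnosov} inconclusive. If one reads the statement as also covering the compact geometry $S^2\times S^2$, the same scheme applies with a minor variant: there $f^*|_{H^2}$ preserves the hyperbolic intersection form over $\Z$ and hence lies in the finite group $O(1,1;\Z)$, so all of its eigenvalues are roots of unity and Theorem~\ref{t:RS} again forces a contradiction.
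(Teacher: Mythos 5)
Your proof is correct, and its skeleton is the same as the paper's: reduce to the model products $S^3\times S^1$ and $S^2\times\Sigma$ via Lemmas~\ref{l:pre1}--\ref{l:pre2}, dispose of codimension one by Franks--Newhouse, and contradict the Ruelle--Sullivan eigenvalue of Theorem~\ref{t:RS} by showing the action on $H^2$ has only $\pm1$ as eigenvalues. The differences are in how that eigenvalue rigidity is obtained. The paper works in cohomology with cup products: it kills the coefficient of $\omega_{S^2}\times 1$ in $f^*(1\times\omega_{\Sigma})$ by observing that every map $S^2\to\Sigma$ has degree zero, kills the other off-diagonal term via $(\omega_{S^2}\times1)^2=0$, and gets $a=d=\pm1$ from $\deg f=\pm1$; you instead pin $f_*[S^2]=\pm[S^2]$ by naturality of Hurewicz on $\pi_2$ and read the eigenvalues off a triangular integral matrix of determinant $\pm1$ --- an equivalent but slightly cleaner mechanism that never needs the off-diagonal entry to vanish. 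Two further points of divergence: (i) for non-trivial $S^2$-bundles over a hyperbolic surface the paper quotes Gogolev--Rodriguez Hertz, whereas you absorb that case by noting the classifying class in $H^2(\Sigma;\Z_2)$ dies under a degree-two cover of the base, which is a legitimate simplification; (ii) for $S^2\times S^2$ the paper uses bounded Lefschetz numbers against \eqref{eq.FixAnosov}, which does not require transitivity, while your appeal to Theorem~\ref{t:RS} does --- harmless for the statement as written, but it weakens the paper's remark that transitivity is only needed for $S^2\times(\text{aspherical surface})$. Your treatment of $S^3\times S^1$ via Hirsch's Theorem~\ref{t:Hirsch4} is a correct and more explicit version of the paper's one-line dismissal.
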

\begin{proof} We will examine each of the involved geometries.

\subsubsection*{The geometry $S^2\times S^2$}
Suppose $f\colon S^2\times S^2\to S^2\times S^2$ is a diffeomorphism or, more generally, a map of degree $\pm1$. The K\"unneth formula tells us that
\[
H^2(S^2\times S^2)=(H^2(S^2)\otimes H^0(S^2))\oplus(H^0(S^2)\otimes H^2(S^2)).
\]
Let $\omega_{S^2}\times 1\in H^2(S^2)\otimes H^0(S^2)$ and $1\times\omega_{S^2}\in H^0(S^2)\otimes H^2(S^2)$ be cohomological fundamental (orientation) classes. We assume that $\deg(f)=1$, after possibly passing to $f^2$. Then, by
\[
f^*(\omega_{S^2}\times1)=a\cdot(\omega_{S^2}\times 1)+b\cdot(1\times\omega_{S^2}), \ a,b\in\Z,
\]
\[
f^*(1\times\omega_{S^2})=c\cdot(\omega_{S^2}\times 1)+d\cdot(1\times\omega_{S^2}), \ c,d\in\Z,
\]
and the naturality of the cup product, we deduce that
\begin{equation}\label{eq.S2}
ad+bc=1.
\end{equation}
Moreover, since the cup product of $\omega_{S^2}\times 1$ with itself vanishes, we have
\[
0=f^*((\omega_{S^2}\times1)\cup(\omega_{S^2}\times1))=f^*(\omega_{S^2}\times1)\cup f^*(\omega_{S^2}\times1)=2ab\cdot (\omega_{S^2\times S^2}),
\]
that is,
\begin{equation}\label{eq.S2b}
ab=0.
\end{equation}
Similarly, since $(1\times\omega_{S^2})^2=0$, we have
\begin{equation}\label{eq.S2c}
cd=0.
\end{equation}
If $a=0$, then by \eqref{eq.S2}, \eqref{eq.S2b} and \eqref{eq.S2c} we obtain $b=c=\pm 1$ and $d=0$. If $b=0$, then by the same equations we obtain $a=d=\pm1$ and $c=0$. Hence, $f$ induces the identity in cohomology, after possibly replacing $f$ by $f^2$. Therefore, the Lefschetz numbers of all powers of $f$ are uniformly bounded, and so $f$ cannot be Anosov  (cf.  \eqref{eq.FixAnosov}).

\subsubsection*{The geometry $S^2\times \R^2$}
In this case, $M$ is finitely covered by $S^2\times T^2$~\cite[Theorem 10.10]{Hillman}. Since any map from $S^2$ to $T^2$ has degree zero (see Proposition \ref{p:surfaceorder}), if $f$ is a 
diffeomorphism of $S^2\times T^2$, then 
\[
f^*(\omega_{S^2}\times1)=a\cdot(\omega_{S^2}\times 1)+b\cdot(1\times\omega_{T^2}), \ a,b\in\Z,
\]
and
\[
f^*(1\times\omega_{T^2})=d\cdot(1\times\omega_{T^2}), \ d\in\Z.
\]
As above, we may assume that $\deg(f)=1$, and so by the naturality of the cup product we obtain
\begin{equation}\label{eq.S2mixied}
ad=1.
\end{equation}
In particular, $a=d=\pm1$. Also, $b=0$ because $(\omega_{S^2}\times1)^2=0$. 

Since any manifold that admits a codimension\index{Anosov diffeomorphism!codimension} one Anosov diffeomorphism is homeomorphic to a torus, we may assume that our hypothetical Anosov $f$ has codimension two. In that case, there is a class $\alpha\in H^2(S^2\times T^2;\R)$ such that $H^2(f)(\alpha)=\lambda\cdot\alpha$ for some positive real $\lambda\neq1$; see Theorem \ref{t:RS}. Now
\[
\alpha=\xi_1\cdot(\omega_{S^2}\times1)+\xi_2\cdot(1\times\omega_{T^2}), \ \xi_1,\xi_2\in\R,
\]
and by $H^2(f)(\alpha)=\lambda\cdot\alpha$ we obtain
\begin{equation}\label{eq.S2mixed-3}
\lambda\xi_1=a\xi_1=\pm\xi_1
\end{equation}
and
\begin{equation}\label{eq.S2mixed-4}
\lambda\xi_2=d\xi_2=\pm\xi_2.
\end{equation}
If $\xi_1\neq0$, then $\lambda=\pm1$ by  \eqref{eq.S2mixed-3}, which is impossible. If $\xi_1=0$, then $\xi_2\neq0$ and \eqref{eq.S2mixed-4} yields again $\lambda=\pm1$. Hence, $S^2\times T^2$ does not admit transitive Anosov diffeomorphisms.

\subsubsection*{The geometry $S^2\times\mathbb{H}^2$}
If $M$ carries the geometry $S^2\times\mathbb{H}^2$, then it is finitely covered by an $S^2$ bundle over a hyperbolic surface $\Sigma$; see~\cite[Theorem 10.7]{Hillman}. The product case $S^2\times\Sigma$ can be treated similarly to the case of $S^2\times T^2$. Moreover, Gogolev-Rodriguez Hertz showed that a $4n$-manifold $E$, which is a fiber bundle $S^{2n}\to E\to B$, does not admit transitive Anosov diffeomorphisms~\cite[Theorem 1.1]{GH} (note that this covers as well the geometry $S^2\times \R^2$). The argument in~\cite{GH} uses again Lefschetz numbers (equation \eqref{eq.FixAnosov}) and cup products via the Gysin sequence 
\[
0\longrightarrow H^{2n}(B;\Z)\longrightarrow H^{2n}(E;\Z)\longrightarrow H^0(B;\Z)\longrightarrow0.
\]
For our purposes, $2n=2$ is the only case of interest for the codimension.

\subsubsection*{The geometry $S^3\times\R$}
Finally, if a manifold carries the geometry $S^3\times\R$, then it is finitely covered by a product $S^3\times S^1$; see~\cite[Ch. 11]{Hillman}. The latter does not admit Anosov diffeomorphisms because $H_2(S^3\times S^1)=0$ and $H_1(S^3\times S^1)=\Z$.

\medskip

The proof of Theorem \ref{t:withcomp} is now complete.
\end{proof}

\subsubsection*{Aspherical geometries $\mathbb{X}^3\times\R$} Next, we will prove the following:

\begin{thm}\label{t:asphwithR}
Let $M$ be a $4$-manifold that carries one of the geometries $\mathbb{H}^3\times\R$, $Sol^3\times\R$, $\widetilde{SL_2}\times\R$ or $Nil^3\times\R$. Then $M$ does not admit Anosov diffeomorphisms.
\end{thm}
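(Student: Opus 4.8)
The plan is to reduce to the product case and then play the exponential growth of periodic points of an Anosov map against a vanishing Lefschetz number forced by a characteristic circle fibration. First I would invoke Theorem~\ref{t:hillmanproducts} to pass to a finite cover of the form $N\times S^1$, where $N$ is a $3$-manifold modeled on $\mathbb{H}^3$, $Sol^3$, $\widetilde{SL_2}$ or $Nil^3$ respectively; by Lemma~\ref{l:pre1} some power of a hypothetical Anosov diffeomorphism lifts to this cover, and since a power of an Anosov map is again Anosov and the Anosov property pulls back to finite covers, I may assume outright that $M=N\times S^1$ carries an Anosov diffeomorphism $f$. Since $M$ is aspherical and $\pi_1(M)$ is not $\Z^4$, it is not homeomorphic to a torus, so $f$ cannot be of codimension one by Franks--Newhouse~\cite{Fr,Newhouse}; hence $f$ is a codimension-two Anosov diffeomorphism with $h_{top}(f)>0$, so $|\mathrm{Fix}(f^m)|$ grows exponentially in $m$.

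The key point is to exhibit a \emph{characteristic} infinite cyclic subgroup giving an oriented circle fibration that $f$ must respect up to homotopy. When the center of $\pi_1(M)$ is infinite cyclic (the cases $\mathbb{H}^3\times\R$ and $Sol^3\times\R$, since $\pi_1(N)$ is centerless) I take $C$ to be the center itself. When the center is $\Z^2$ (the cases $\widetilde{SL_2}\times\R$ and $Nil^3\times\R$) I take $C:=[\pi_1(M),\pi_1(M)]\cap Z(\pi_1(M))$, which is infinite cyclic and commensurable with the fiber class $z$ of $N$. In either case $C$ is characteristic and realizes $M$ as an oriented $S^1$-bundle $S^1\to M\to B$ over an aspherical base $B$ (namely $N$, $N$, $\Sigma\times S^1$, $T^2\times S^1$ respectively). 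Because $C$ is characteristic, $f_*$ preserves it and descends to an automorphism of $\pi_1(B)$; as $M$ and $B$ are aspherical, $f$ is homotopic to a fiber-preserving map over some $\bar f\colon B\to B$, acting on the circle fiber with degree $\pm1$. Replacing $f$ by $f^2$ I may take the fiber degree to be $+1$.

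Now I would apply multiplicativity of the Lefschetz number for the oriented (hence cohomologically untwisted) circle bundle: for a fiber-preserving map the total Lefschetz number factors as $\Lambda(f^{2m})=\Lambda(\bar f^{2m})\cdot\Lambda\bigl(f^{2m}|_{S^1}\bigr)$, and the fiber factor is $1-\deg(f^{2m}|_{S^1})=1-1=0$. Hence $\Lambda(f^{2m})=0$ for every $m$, irrespective of the dynamics of $\bar f$ on the base; this last point is essential in the $\widetilde{SL_2}\times\R$ case, where $\bar f$ may be pseudo-Anosov on the surface factor and thus contributes non-root-of-unity eigenvalues to $H^*(M)$. On the other hand, after passing to a further finite cover and power so that the stable and unstable distributions are orientable and preserved, all fixed points of the Anosov map have the same Lefschetz index, so \eqref{eq.FixAnosov} gives $|\Lambda(f^{2m})|=|\mathrm{Fix}(f^{2m})|$, which tends to infinity. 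This contradicts $\Lambda(f^{2m})=0$ and rules out $f$.

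For the two solvable cases $Sol^3\times\R$ and $Nil^3\times\R$ there is a shorter route that avoids the fibration bookkeeping: here $\pi_1(M)$ is virtually polycyclic with universal cover diffeomorphic to $\R^4$, so Hirsch's Theorem~\ref{t:Hirsch4} applies and forbids roots of unity among the eigenvalues of $H_1(f)$. But the image of the characteristic circle $C$ in $H_1(M;\R)$ is a nonzero $f_*$-invariant line, one-dimensional because the fiber class $z$ is torsion in $H_1$, on which $f_*$ acts by $\pm1$; this produces a root-of-unity eigenvalue and contradicts Hirsch directly. I expect the main obstacle to be the non-polycyclic cases $\mathbb{H}^3\times\R$ and especially $\widetilde{SL_2}\times\R$: there one cannot argue that all cohomological eigenvalues of $f$ are roots of unity, and the whole force of the argument rests on making the characteristic circle fibration $f$-equivariant up to homotopy and on the Lefschetz multiplicativity that collapses the fiber contribution to zero, together with the care needed to apply \eqref{eq.FixAnosov} in the possibly non-transitive setting.
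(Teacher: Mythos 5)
Your proof is correct, and its first half (passing to the finite cover $N\times S^1$ via Lemma~\ref{l:pre1}, then isolating a characteristic infinite cyclic ``circle direction'' --- the centre for $\mathbb{H}^3\times\R$ and $Sol^3\times\R$, and the torsion-in-$H_1$ part of the rank-two centre for $\widetilde{SL_2}\times\R$ and $Nil^3\times\R$) coincides with the paper's set-up; but the concluding mechanism is genuinely different. The paper turns the characteristic circle into an $f^*$-fixed class $u=1\times\omega_{S^1}\in H^1(M;\Z)$ (using that $N$ is not dominated by products to control $H^3(f)$, and $\deg(f)=\pm1$ to force the eigenvalue to be $\pm1$) and then applies Hirsch's Theorem~\ref{t:Hirschcov}: the infinite cyclic cover dual to $u$ is $N\times\R\simeq N$, whose rational homology is finite-dimensional, a contradiction. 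You instead homotope $f$ to a fibre-preserving map of the oriented circle bundle and compute $\Lambda(f^{2m})=\Lambda(\bar f^{2m})\cdot\bigl(1-\deg(f^{2m}|_{S^1})\bigr)=0$, against the exponential growth in \eqref{eq.FixAnosov}; the multiplicativity is legitimate here because the coefficient system on $H^*(S^1)$ is trivial and the alternating trace on the Serre spectral sequence is preserved from $E_2$ to $E_\infty$. Your route has the merit of handling all four geometries, including the twisted $\widetilde{SL_2}\times\R$ case where the base map may have non-root-of-unity cohomological eigenvalues, by a single Lefschetz computation; the paper's route avoids the fibration and spectral-sequence bookkeeping at the cost of invoking Hirsch's theorem. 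Two small repairs: in your ``shorter route'' for $Nil^3\times\R$ the subgroup $C=[\pi_1(M),\pi_1(M)]\cap Z(\pi_1(M))$ has \emph{trivial} image in $H_1(M;\R)$, so the $\pm1$ eigenvalue must come from the image of the full centre (a line precisely because the $Nil^3$-fibre class is torsion in $H_1$), as your parenthetical suggests you intend; and the further cover/power taken to orient the invariant distributions should be noted to preserve the circle-bundle structure, which it does since the characteristic fibre subgroup meets every finite-index subgroup in a finite-index subgroup. Neither affects the main argument.
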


\begin{proof}
As usual, we will treat the various geometries according to their algebraic properties.

\subsubsection*{The geometries  $Nil^3\times\R$ and $\widetilde{SL_2}\times\R$}\label{ss:Hirsch2}

Suppose $M$ is a 4-manifold that carries one of the geometries $Nil^3\times\R$ or $\widetilde{SL_2}\times\R$. In this case, $M$ is finitely covered by a product $N\times S^1$, where $N$ is a $Nil^3$ manifold or an $\widetilde{SL_2}$ manifold respectively; see Proposition \ref{t:hillmanproducts}. According to Table \ref{table:3geom}, we can moreover assume that $N$ is a non-trivial $S^1$ bundle over the $2$-torus or a hyperbolic surface respectively. We conclude that the center of $\pi_1(N\times S^1)$ has rank two. A finite power of the generator of the fiber of $N$ vanishes in $H_1(N)$, and thus, for any diffeomorphism $f\colon N\times S^1\to N\times S^1$, the generator of $H_1(S^1)$ maps to a power of itself (modulo torsion). Hence, we have in cohomology
\[
H^1(f)(1\times\omega_{S^1})=a\cdot(1\times\omega_{S^1}), \ a\in\Z.
\]
Recall that $N$ is not dominated by products by Theorem \ref{t:order3-mfds} (and Table \ref{table:3geom}). Since the degree three cohomology of $N\times S^1$ is
\[
H^3(N\times S^1)\cong H^3(N)\oplus(H^2(N)\otimes H^1(S^1)),
\]
we obtain
\[
H^3(f)(\omega_N\times1)=b\cdot(\omega_N\times1), \ b\in\Z;
\]
see the proof of Theorem 1.4 in~\cite{Neodegrees} for further details. 
Since $\deg(f)=\pm1$, we deduce that $a,b\in\{\pm1\}$. Hence, we can assume that 
\[
f^*(1\times\omega_{S^1})=1\times\omega_{S^1},
\]
after replacing $f$ by $f^2$, if necessary. We conclude that $f$ is not Anosov by Lemma \ref{l:pre1} and the next theorem, which is a generalisation of Theorem \ref{t:Hirsch4}.

\begin{thm}\cite[Theorem 1]{Hirsch}\label{t:Hirschcov}
Let $f\colon M\to M$ be an Anosov diffeomorphism and a non-trivial cohomology class $u\in H^1(M;\Z)$ such that $(f^*)^m(u)=u$, for some positive integer $m$. Then the infinite cyclic covering of $M$ corresponding to $u$ has infinite dimensional rational homology.
\end{thm}

\subsubsection*{The geometries $\mathbb{H}^3\times\R$ and $Sol^3\times\R$}

Let $M$ be a 4-manifold modeled on the $\mathbb{H}^3\times\R$ or the $Sol^3\times\R$ geometry. Then, $M$ is finitely covered by $N\times S^1$, where $N$ is a hyperbolic 3-manifold or a $Sol^3$ manifold respectively; see Proposition \ref{t:hillmanproducts}. In particular, the fundamental group $\pi_1(N\times S^1)$ has infinite cyclic center generated by the $S^1$ factor, which we denote by $\pi_1(S^1)=\langle z\rangle$. If $f\colon N\times S^1\to N\times S^1$ is a diffeomorphism, then $f_*(\langle z\rangle)=\langle z\rangle$, and therefore $H^1(f)(\omega_{S^1})=\omega_{S^1}$ (replace $f$ by $f^2$, if necessary) as above, because $N$ does not admit maps of non-zero degree from direct products (see Theorem \ref{t:order3-mfds} and Table \ref{table:3geom}). Hence, $f$ cannot be Anosov by Lemma \ref{l:pre1} and Theorem \ref{t:Hirschcov}.

\begin{rem}
When $N$ is hyperbolic, the main result of~\cite{GL} implies also that $N\times S^1$ does not admit Anosov diffeomorphisms, because $\mathrm{Out}(\pi_1(N))$ is finite and $\pi_1(N)$ is Hopfian with  trivial intersection of its maximal nilpotent subgroups. In fact, as shown in~\cite{NeoAnosov2}, the only properties needed to exclude Anosov diffeomorphisms on $N\times S^1$ is that $\mathrm{Out}(\pi_1(N))$ is finite and $\pi_1(N)$ has trivial center.
\end{rem}

We have now finished the proof of Theorem \ref{t:asphwithR}.
\end{proof}

\subsubsection*{The irreducible $\mathbb{H}^2\times\mathbb{H}^2$ geometry.} Finally, suppose that $M$ carries the irreducible geometry $\mathbb{H}^2\times\mathbb{H}^2$. Then $\pi_1(M)$ has finite outer automorphism group by the strong rigidity of Mostow, Prasad and Margulis. Thus, the same argument as in the proof of Theorem \ref{t:finiteout} implies that $M$ does not admit Anosov diffeomorphisms.

\subsubsection{Non-product, solvable or compact geometries}\label{s:solvandcomp}

Finally, we prove the following:

\begin{thm}\label{t:solcomp}
There are no Anosov diffeomorphisms on a manifold carrying one of the geometries $Nil^4$, $Sol^4_{m \neq n}$, $Sol^4_0$, $Sol^4_1$, $S^4$ or $\CP^2$.
\end{thm}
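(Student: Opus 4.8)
The plan is to separate the six geometries into the two compact ones, $S^4$ and $\CP^2$, which I would rule out by the growth of periodic points (Lefschetz numbers), and the four non-product solvable ones, $Nil^4$, $Sol^4_1$, $Sol^4_{m\neq n}$ and $Sol^4_0$, which I would rule out by Hirsch's Theorem \ref{t:Hirsch4}. In neither family is transitivity needed, exactly as in the $S^2\times S^2$ case of Theorem \ref{t:withcomp}.

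For the compact geometries I would argue verbatim as for $S^2\times S^2$ in Theorem \ref{t:withcomp}. A closed oriented $4$-manifold carrying the $S^4$ or the $\CP^2$ geometry is diffeomorphic to $S^4$ or $\CP^2$ respectively (a Lefschetz count $\Lambda(g)=1+\deg(g)\neq 0$ excludes free finite quotients), so a hypothetical Anosov diffeomorphism $f$ has degree $\pm1$. On $S^4$ the only nonzero reduced cohomology is $H^4$, on which $f^*$ is multiplication by $\deg(f)=\pm1$. On $\CP^2$, writing $f^*(\omega)=a\omega$ with $a\in\Z$ for a generator $\omega\in H^2$, naturality of the cup product gives $a^2\omega^2=f^*(\omega^2)=\deg(f)\,\omega^2$, so $a^2=\deg(f)$ and hence $a=\pm1$. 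Thus after replacing $f$ by $f^2$ the induced map on cohomology is the identity, so the Lefschetz numbers $\Lambda(f^{2m})$ are uniformly bounded, contradicting the exponential growth of periodic points recorded in \eqref{eq.FixAnosov}.

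For the solvable geometries I would invoke Theorem \ref{t:Hirsch4}: each such $M$ is aspherical with $\pi_1(M)$ virtually polycyclic and universal cover $\R^4$, so an Anosov diffeomorphism would force $H_1(f)\colon H_1(M;\R)\to H_1(M;\R)$ to have \emph{no} root of unity among its eigenvalues. My goal is therefore to exhibit a root-of-unity eigenvalue. For $Sol^4_{m\neq n}$ and $Sol^4_0$, which by Theorem \ref{t:mappingtorisolvable}(1) are mapping tori of $T^3$ whose monodromy $A\in\mathrm{GL}(3;\Z)$ has no root-of-unity eigenvalues, the matrix $A-I$ is invertible over $\Q$, whence $H_1(M;\Q)\cong \Q^3/\operatorname{im}(A-I)\oplus\Q=\Q$; for $Sol^4_1$, a mapping torus of a $Nil^3$-manifold (Theorem \ref{t:mappingtorisolvable}(2), Proposition \ref{p:sol1}) whose monodromy acts hyperbolically, hence without eigenvalue $1$, on the rank-two abelianization of the fiber group, the same cokernel computation again gives $b_1(M)=1$. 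In each of these three cases $H_1(f)$ is an automorphism of $H_1(M;\Z)/\mathrm{Tor}\cong\Z$, i.e. multiplication by $\pm1$, contradicting Theorem \ref{t:Hirsch4}.

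The remaining and genuinely delicate case is $Nil^4$, where $b_1(M)=2$ and a single integral eigenvalue no longer suffices; this is the step I expect to be the main obstacle. Here I would pass to the associated graded Lie ring of the lower central series of $\pi_1(M)$ (equivalently the Malcev Lie algebra of $Nil^4$), whose graded pieces have dimensions $\dim V_1=2$ and $\dim V_2=\dim V_3=1$, with iterated bracket $\mu\colon V_1\otimes V_2\to V_3$. The subspace $L=\{\,v\in V_1 : \mu(v\otimes V_2)=0\,\}$ is one-dimensional, and since $\mu$ is defined purely from the bracket it is preserved by the automorphism $H_1(f)\otimes\R$ of $V_1=H_1(M;\R)$, using $\mu(\bar f_1 v\otimes\bar f_2 w)=\bar f_3\,\mu(v\otimes w)$ and $\bar f_2 V_2=V_2$. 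Hence $H_1(f)$ preserves the primitive rank-one summand $L\cap(H_1(M;\Z)/\mathrm{Tor})$, so in an adapted basis it is integral upper triangular with diagonal entries $\pm1$; both eigenvalues are then roots of unity, contradicting Theorem \ref{t:Hirsch4} and completing the proof. The crux is precisely isolating this characteristic line $L$ from the $3$-step nilpotent structure of $Nil^4$, which is what distinguishes $Nil^4$ from the $b_1=1$ geometries and forces the relevant eigenvalue to be a root of unity.
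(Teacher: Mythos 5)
Your proof is correct and follows the same overall strategy as the paper (Lefschetz numbers for the compact geometries, Hirsch's theorem for the solvable ones), but two of your steps take a genuinely different route. For $Sol^4_{m\neq n}$, $Sol^4_0$ and $Sol^4_1$ you compute $b_1(M)=1$ directly from the mapping-torus structure and then apply Theorem \ref{t:Hirsch4}; the paper instead quotes Hirsch's packaged statement (Theorem \ref{t:Hirsch}), which is the same computation in disguise, so nothing is gained or lost there. The real divergence is $Nil^4$: the paper manipulates the explicit presentation of $\pi_1(M)$ from Proposition \ref{p:nil4} to show $H_1(f)(\bar t)=\bar t$, whereas you isolate the invariant line $L=\{v\in V_1:\mu(v\otimes V_2)=0\}$ in the associated graded of the lower central series and conclude that $H_1(f)$ is integral triangular with diagonal $\pm1$. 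Your version is more conceptual and makes transparent \emph{why} the eigenvalue is forced to be a root of unity despite $b_1=2$ (it is exactly the line $\langle\bar t\rangle$ the paper finds by hand), and it would generalise to other filiform-type nilpotent groups; the paper's version is more elementary and self-contained. One small patch you should make: a $Nil^4$-manifold has only \emph{virtually} nilpotent fundamental group (it is a quotient by a lattice in $\mathrm{Isom}(Nil^4)$, not necessarily in $Nil^4$), so the lower-central-series/Malcev argument should be run after passing to a finite cover with nilpotent $\pi_1$ --- either the cover corresponding to the Fitting subgroup, which is characteristic so that $f$ itself lifts, or an arbitrary finite cover combined with Lemma \ref{l:pre1}, as the paper does. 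With that adjustment the argument is complete; your Lefschetz treatment of $S^4$ and $\CP^2$ coincides with the paper's (which simply cites Hillman for the fact that these are the only manifolds carrying those geometries, a point your $\Lambda(g)=1+\deg(g)$ aside handles only for orientation-preserving quotients, the orientation-reversing ones being excluded by the standing orientability convention).
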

\begin{proof} The proof will be done according to certain properties of the involved geometries.

\subsubsection*{The geometry $Nil^4$.}\label{ss:Nil}

Let $M$ be a 4-manifold modeled on the geometry $Nil^4$. By Proposition \ref{p:nil4}, the fundamental group of $M$ (after possibly replacing $M$ by a finite cover) is given by
\[
 \pi_1(M) = \langle x,y,z,t \ \vert \ txt^{-1}=x, \ tyt^{-1}=x^kyz^l, \ tzt^{-1} = z, [x,y]=z, \ xz=zx, \ yz=zy \rangle,
\]
where $k\geq 1$, $l\in\Z$, and it has infinite cyclic center $C(\pi_1(M)) = \langle z \rangle$. Then
\[
 \pi_1(M)/\langle z\rangle = \langle x,y,t \ \vert \ [t,y]=x^k, \ xt=tx, \ xy=yx \rangle.
\]
Let $f\colon M\to M$ be a diffeomorphism. The automorphism $f_*\colon\pi_1(M)\to\pi_1(M)$ induces an automorphism on the quotient $\pi_1(M)/\langle z\rangle$, because $f_*(\langle z\rangle)=\langle z\rangle$.  But $C( \pi_1(M)/\langle z\rangle)=\langle x\rangle$, hence $f_*(x)=z^nx^m$, for some $n,m\in\Z$, $m\neq0$. Since $[x,y]=z$ and by the fact that the relation $txt^{-1}=x$ is mapped to $f_*(t)x^mf_*(t)^{-1}=x^m$, we obtain that $f_*(t)$ does not contain any powers of $y$. 
By the commutative diagram
$$
\xymatrix{
\pi_1(M)\ar[d]_{h} \ar[r]^{{f}_*}&  \ar[d]^{h}  \pi_1(M)\\
H_1(M;\Z) \ar[r]^{{H_1(f)}}& H_1(M;\Z), \\
}
$$
where 
\[
h\colon\pi_1(M)\to H_1(M;\Z)=\pi_1(M)/[\pi_1(M),\pi_1(M)]
\]
denotes the Hurewicz homomorphism, we conclude that $H_1(f)$ maps the homology class $\bar t\in H_1(M;\Z)/Tor H_1(M;\Z)$ to a multiple of itself. In fact, the induced automorphism on $H_1(M;\Z)/Tor H_1(M;\Z)=\langle \bar t\rangle\times \langle \bar y\rangle=\Z\times\Z$ implies that $H_1(f)(\bar t)=\bar t$, which means that $f$ cannot be Anosov by Lemma \ref{l:pre1} and Theorem \ref{t:Hirsch4} (or Theorem \ref{t:Hirschcov}).

\subsubsection*{The geometries $Sol_{m \neq n}^4$, $Sol^4_0$ and $Sol^4_1$}

For these geometries, the following immediate consequence of Theorem \ref{t:Hirsch4} will suffice:

\begin{thm}\cite[Theorem 8]{Hirsch}\label{t:Hirsch}
Suppose $M$ is a manifold such that 
\begin{itemize}
\item[(a)] $\pi_1(M)$ is virtually polycyclic;
\item[(b)] the universal covering of $M$ has finite dimensional rational homology;
\item[(c)] $H^1(M;\Z)\cong\Z$.
\end{itemize}
Then $M$ does not admit Anosov diffeomorphisms.
\end{thm}

Suppose $M$ is a 4-manifold modeled on one among the geometries $Sol^4_{m \neq n}$, $Sol^4_0$ or $Sol^4_1$. If $M$ carries one of the first two geometries, then $M$ is a mapping torus of a hyperbolic homeomorphism of $T^3$; see Theorem \ref{t:mappingtorisolvable}(1); in particular, $H^1(M;\Z)\cong\Z$. Since $\pi_1(M)$ is polycyclic and $M$ is aspherical, Lemma \ref{l:pre1} and Theorem \ref{t:Hirsch} imply that $M$ does not admit Anosov diffeomorphisms. 
If $M$ carries the geometry $Sol_1^4$, then, by Theorem \ref{t:mappingtorisolvable}(2) (see also Proposition~\ref{p:sol1}), we have
\begin{eqnarray*}
  \pi_1(M) = \langle x,y,z,t \ | \ txt^{-1}=x^ay^cz^k,  tyt^{-1}=x^by^dz^l,  tzt^{-1} =z,
             [x,y]=z, xz=zx,  yz=zy \rangle,
\end{eqnarray*}
where $k,l\in\Z$ and the matrix
\[
\left(\begin{array}{cc}
   a & b \\
   c & d \\
\end{array} \right)
\]
 has no eigenvalues which are roots of unity. 
The Abelianization of $\pi_1(M)$ tells us that $H^1(M;\Z)\cong\Z$. Since moreover $M$ is aspherical and $\pi_1(M)$ is polycyclic, Lemma \ref{l:pre1} and Theorem \ref{t:Hirsch} imply that $M$ does not admit Anosov diffeomorphisms.
 
\begin{rem}
Note that Theorem \ref{t:Hirsch} is not applicable if $M$ is a $Nil^4$ manifold, because $H^1(M;\Z)\cong\Z^2$.
\end{rem}

\subsubsection*{The geometries $S^4$ and $\CP^2$}
The only 4-manifold modeled on $S^4$ is $S^4$ itself~\cite[Section 12.1]{Hillman}. Since any orientation preserving diffeomorphism $f\colon S^4\to S^4$ induces the identity in homology,  $f$ cannot be Anosov (cf. \eqref{eq.FixAnosov}).

Similarly to $S^4$, the only 4-manifold modeled on $\CP^2$ is $\CP^2$ itself~\cite[Section 12.1]{Hillman}. Let $f\colon \CP^2\to \CP^2$ 
 be a diffeomorphism. Since the cohomology groups of $\CP^2$ are $\Z$ in degrees 0, 2 and 4 and trivial otherwise, $f^m$ induces the identity on cohomology, for some $m\geq1$. Hence, $f$ cannot be Anosov.
 
 \medskip
 
We have now completed the proof of Theorem \ref{t:solcomp}.
 \end{proof}

This finishes the proof of Theorem \ref{t:main}.

\include{index}

 \printindex


\begin{thebibliography}{10}

\bibitem{Agol}
I. Agol, {\em The virtual Haken conjecture}, With an appendix by I. Agol, D. Groves and J. Manning, Doc. Math. {\bf 18} (2013), 1045--1087.

\bibitem{BDPP}
S. Boucksom, J.-P. Demailly, M. P\u aun and T. Peternell, {\em The pseudo-effective cone of a compact K\"ahler manifold and varieties of negative Kodaira dimension}, J. Algebraic Geom. {\bf 22} (2013), 201--248.

\bibitem{BG}
R. Brooks and W. Goldman, {\em Volumes in Seifert space}, Duke Math. J. {\bf 51} (1984), 529--545.

\bibitem{Bucher}
M. Bucher, {\em Simplicial volume of locally symmetric spaces covered by $\mathrm{SL}_{3}\mathbb{R}/\mathrm{SO}(3)$}, Geom. Dedicata {\bf 125} (2007), 203--224.

\bibitem{Buch}
M. Bucher, {\em The simplicial volume of closed manifolds covered by $\mathbb{H}^2\times\mathbb{H}^2$}, J. Topol. {\bf 1} (2008), 584--602.

\bibitem{BN}
M. Bucher and C. Neofytidis, {\em The simplicial volume of mapping tori of 3-manifolds}, Math. Ann. {\bf 376} (2020), 1429--1447.

\bibitem{CT}
J. A. Carlson and D. Toledo, {\em Harmonic mappings of K\"ahler manifolds to locally symmetric spaces}, Inst. Hautes \'Etudes Sci. Publ. Math. {\bf 69} (1989), 173--201.
 
 \bibitem{CHL}
 B. Claudon, A. H\"oring and H.-Y. Lin, {\em The fundamental group of compact K\"ahler threefolds}, Geom. Topol. {\bf 23} (2019), 3233--3271.
 
 \bibitem{CKS}
 C. Cassidy, N. Kennedy and D. Scevenels, {\em Hyperbolic automorphisms for groups in $T(4,2)$}, Crystallographic groups and their generalizations (Kortrijk, 1999), 171--175, 
Contemp. Math. {\bf 262}, Amer. Math. Soc., Providence, RI, 2000. 
 
 \bibitem{CL}
 D. Crowley and C. L\"oh, {\em Functorial seminorms on singular homology and (in)flexible manifolds}, Algebr. Geom. Topol. {\bf 15} (2015), 1453--1499.

\bibitem{Filipkiewicz}
R. Filipkiewicz, {\sl Four-dimensional geometries}, PhD thesis, University of Warwick, 1983.

\bibitem{Fr}
J. Franks, {\sl Anosov Diffeomorphisms. Global Analysis}, Proceedings of Symposia in Pure Mathematics, pp. 61--93, AMS, Providence, R.I 1970.

\bibitem{Gaifullin}
A. Gaifullin, {\em Universal realisators for homology classes}, Geom. Top. {\bf 17} (2013), 1745--1772.

\bibitem{Geng1}
A. Geng, {\em 5-dimensional geometries I: the general classification}, Preprint: arXiv:1605.07545.

\bibitem{Geng2}
A. Geng, {\em 5-dimensional geometries II: the non-fibered geometries}, Preprint: arXiv:1605.07534.

\bibitem{Geng3}
A. Geng, {\em 5-dimensional geometries III: the fibered geometries}, Preprint: arXiv:1605.07546.

\bibitem{GH}
A. Gogolev and F. Rodriguez Hertz, {\em Manifolds with higher homotopy which do not support Anosov diffeomorphisms}, Bull. Lond. Math Soc. {\bf 46} (2014), 349--366.

\bibitem{GL}
A. Gogolev and J.-F. Lafont, {\em Aspherical products which do not support Anosov diffeomorphisms}, Ann. Henri Poincar\'e {\bf 17} (2016), 3005--3026.

\bibitem{Gromov}
M. Gromov, {\em Volume and bounded cohomology}, Inst. Hautes \'Etudes Sci. Publ. Math. {\bf 56} (1982), 5--99.

\bibitem{Gromovbook}
M.~Gromov, {\sl Metric Structures for Riemannian and Non-Riemannian Spaces}, with appendices by M.~Katz, P.~Pansu and S.~Semmes, 
translated from the French by S.~M.~Bates, Progress in Math. {\bf 152}, Birkh\"auser Verlag, 1999.

\bibitem{Gromovessay}
M. Gromov, {\em Hyperbolic groups}, in “Essays in Group Theory”, Math. Sci. Res. Inst. Publ., Springer, New York-Berlin {\bf 8} (1987), 75--263.

\bibitem{Ha}
A. Hatcher, {\sl Algebraic topology}, Cambridge University Press, Cambridge, 2002.

\bibitem{Hillman}
J. A. Hillman, {\sl Four-manifolds, geometries and knots}, Geom. Topol. Monographs {\bf 5}, Coventry, 2002.

\bibitem{Hirsch}
M. W. Hirsch, {\em Anosov maps, polycyclic groups, and homology}, Topology {\bf 10} (1971), 177--183.

\bibitem{IY}
H. Inoue and K. Yano, {\em The Gromov invariant of negatively curved manifolds}, Topology {\bf 21} no. 1 (1981), 83--89.

\bibitem{Kirby}
R. Kirby, {\sl Problems in low-dimensional topology}, Berkeley 1995.

\bibitem{Kol95} 
J. Koll\'ar, {\sl Shafarevich maps and automorphic forms}, M. B. Porter Lectures. Princeton University Press, Princeton, NJ, 1995, x+201 pp.

\bibitem{Kotschick:4-mfds}
D. Kotschick, {\em Remarks on geometric structures on compact complex surfaces}, Topology {\bf 31} (1992), 317--321.

\bibitem{KL}
D. Kotschick and C. L\"oh, {\em Fundamental classes not representable by products}, J. London Math. Soc. {\bf 79} (2009), 545--561.

\bibitem{KotschickLoehNeofytidis}
D. Kotschick, C. L\"oh and C. Neofytidis, {\em On stability of non-domination under taking products}, Proc. Amer. Math. Soc. {\bf 144} (2016), 2705--2710.

\bibitem{KN}
D. Kotschick and C. Neofytidis, {\em On three-manifolds dominated by circle bundles}, Math. Z. {\bf 274} (2013), 21--32.

\bibitem{LS}
J.-F.~Lafont and B.~Schmidt, {\em Simplicial volume of closed locally symmetric spaces of non-compact type}, Acta Math. {\bf 197} (2006), 129--143.

\bibitem{Lisurvey}
T.-J. Li, {\em Kodaira dimension in low-dimensional topology}, Tsinghua lectures in mathematics, 265--291, Adv. Lect. Math. (ALM) {\bf45}, Int. Press, Somerville, MA, 2019.

\bibitem{Lin} 
H.-Y. Lin, {\em Algebraic approximations of  compact K\"ahler threefolds}, arXiv:1710.01083.

\bibitem{Mal}
W. Malfait, {\em Anosov diffeomorphisms on nilmanifolds of dimension at most six}, Geom. Dedicata {\bf 79} (2000), 291--298.


\bibitem{Margulis}
G. A. Margulis, {\sl Discrete subgroups of semisimple Lie groups}, Ergebnisse der Mathematik und ihrer Grenzgebiete, 3. Folge Bd. 17. Springer-Verlag, Berlin-Heidelberg, 1991.

\bibitem{Mineyev}
I. Mineyev, {\em Bounded cohomology characterizes hyperbolic groups}, Q. J. Math. {\bf 53} (2002), 5--73.

\bibitem{Mor}
S. Mori, {\em Flip theorem and the existence of minimal models for $3$-folds}, J. Amer. Math. Soc. {\bf 1} (1988), 117--253. 

\bibitem{Neothesis}
C. Neofytidis, {\sl Non-zero degree maps between manifolds and groups presentable by products}, Munich thesis, 2014, available online at http://edoc.ub.uni-muenchen.de/17204/.

\bibitem{Neobranch}
C. Neofytidis, {\em Branched coverings of simply connected manifolds}, Topol. Appl. {\bf 178} (2014), 360--371.

\bibitem{Neodegrees}
C.~Neofytidis, {\em Degrees of self-maps of products}, Int. Math. Res. Not. IMRN {\bf 22} (2017), 6977-6989.

\bibitem{NeoIIPP}
C.~Neofytidis, {\em Fundamental groups of aspherical manifolds and maps of non-zero degree}, Groups Geom. Dyn. {\bf 12} (2018), 637--677.

\bibitem{Neoorder}
C.~Neofytidis, {\em Ordering Thurston's geometries by maps of non-zero degree}, J. Topol. Anal. {\bf 10} (2018), 853--872.

\bibitem{NeoAnosov1}
C. Neofytidis, {\em Anosov diffeomorphisms of products I. Negative curvature and rational homology spheres}, Ergodic Theory Dynam. Systems {\bf 41} (2021), 553--569. 

\bibitem{NeoAnosov2}
C. Neofytidis, {\em Anosov diffeomorphisms of products II. Aspherical manifolds}, J. Pure Appl. Algebra {\bf 224} no. 3 (2020), 1102--1114.

\bibitem{NeoAnosov-Thurston}
C.~Neofytidis, {\em Anosov diffeomorphisms on Thurston geometric 4-manifolds}, Geom. Dedicata {\bf 213} (2021), 325--337.

\bibitem{NeoHopf}
C.~Neofytidis, {\em On a problem of Hopf for circle bundles over aspherical manifolds with hyperbolic fundamental group}, Algebr. Geom. Topol. {\bf 23} (2023), 3205--3220.

\bibitem{NZ}
C. Neofytidis and W. Zhang, {\em Geometric structures, the Gromov order, Kodaira dimensions and simplicial volume}, Pacific J. Math. {\bf 315} (2021), 209--233.

\bibitem{Newhouse}
S. E. Newhouse, {\em On codimension one Anosov diffeomorphisms}, Amer. J. Math. {\bf 92} (1970), 761--770.

\bibitem{RS}
D. Ruelle and D. Sullivan, {\em Currents, flows and diffeomorphisms}, Topology {\bf 14} (1975), 319--327.

\bibitem{Scott}
P. Scott, {\em The geometries of 3-manifolds}, Bull. London Math. Soc. {\bf 15} (1983), 401--487.

\bibitem{Shi}
K. Shiraiwa, {\em Manifolds which do not admit Anosov diffeomorphisms}, Nagoya Math. J. {\bf 49} (1973), 111--115.

\bibitem{Sm}
S.~Smale, {\em Differentiable dynamical systems}, Bull. Am. Math. Soc. {\bf 73} (1967), 747--817.

\bibitem{Thom}
R. Thom, {\em Quelques propri\'et\'es globales des vari\'et\'es diff\'erentiables}, Comment. Math. Helv. {\bf 28} (1954), 17--86.

\bibitem{Thu}
W. P. Thurston, {\em Three-Dimensional Geometry and Topology}, Princeton University Press, 1997.

\bibitem{Wall1}
C. T. C. Wall, {\em Geometries and geometric structures in real dimension 4 and complex dimension 2}, In Geometry and Topology (College Park, Md., 1983/84), Lecture Notes in Math. {\bf 1167}, Springer, Berlin (1985), 268--292.

\bibitem{Wall2}
C. T. C. Wall, {\em Geometric structures on compact complex analytic surfaces}, Topology {\bf 25} (1986), 119--153.

\bibitem{Wangorder}
S. Wang, {\em The existence of maps of non-zero degree between aspherical 3-manifolds}, Math. Z. {\bf 208} (1991), 147--160.

\bibitem{Yano}
K. Yano, {\em There are no transitive Anosov diffeomorphisms on negatively curved manifolds}, Proc. Japan Acad. Ser. A Math. Sci. {\bf 59} (1983), 445.

\bibitem{DZhang}
D. Zhang, {\em Anosov diffeomorphisms on a product of surfaces}, Preprint: arXiv:2205.11296.

\bibitem{Zhang}
W. Zhang, {\em Geometric structures, Gromov norm and Kodaira dimensions}, Adv. Math. {\bf 308} (2017), 1--35. 

\end{thebibliography}
\end{document}